\allowdisplaybreaks \numberwithin{equation}{section}
\newtheorem{theorem}{Theorem}[section]
\newtheorem{proposition}[theorem]{Proposition}
\newtheorem{corollary}[theorem]{Corollary}
\newtheorem{lemma}[theorem]{Lemma}
\theoremstyle{definition}
\newtheorem{remark}[theorem]{Remark}
\newcommand{\R}{\mathbb{R}}
\newcommand{\ds}{\displaystyle}
\begin{document}
\title[periodic boundary condition]
{solutions for a critical elliptic system with periodic boundary condition}
 \author{ Qingfang Wang, Wenju Wu$^\dag$ and Mingxue Zhai}

\address{School of Mathematics and Computer Science, Wuhan Polytechnic University, Wuhan 430079, P. R. China }
\email{wangqingfang@whpu.edu.cn}

\address{School of Mathematics and Statistics, Central China Normal University, Wuhan, 430079, P. R. China}
\email{wjwu@mails.ccnu.edu.cn}

\address{School of Mathematics and Statistics, Central China Normal University, Wuhan, 430079, P. R. China}
\email{mxzhai@mails.ccnu.edu.cn}

\thanks{The research was supported by NSFC (No.~12126356) and NSFC (No.~12471106).}
\thanks{$\dag$Corresponding author: Wenju Wu.}
\begin{abstract}

In this paper, we consider the following nonlinear critical Schr\"odinger system:
\begin{eqnarray*}\begin{cases}
-\Delta u=K_1(y)u^{2^*-1}+\frac{1}{2} u^{\frac{2^*}{2}-1}v^\frac{2^*}{2}, \,\,\,\,\,y\in\Omega,\,\,\,\,\,u>0,\cr
-\Delta v=K_2(y)v^{2^*-1}+\frac{1}{2} v^{\frac{2^*}{2}-1}u^\frac{2^*}{2}, \,\,\,\,\,y\in\Omega,\,\,\,\,\,v>0,\cr
u(y'+Le_j,y'')=u(y), \,\,\,\,\,\frac{\partial u(y'+Le_j,y'')}{\partial y_j}=\frac{\partial u(y)}{\partial y_j}, \,\,\,\,\,if\,\, y'=-\frac{L}{2}e_j,\,\,\,j=1, \ldots, k,\cr
v(y'+Le_j,y'')=v(y), \,\,\,\,\,\frac{\partial v(y'+Le_j,y'')}{\partial y_j}=\frac{\partial v(y)}{\partial y_j}, \,\,\,\,\,if\,\, y'=-\frac{L}{2}e_j,\,\,\,j=1, \ldots, k,\cr
u,v \to 0 \,\,as \,\,|y''|\to \infty,
\end{cases}
\end{eqnarray*}
where $K_1(y),\,K_2(y)$ satisfy some periodic conditions and $\Omega$ is a strip. Under some conditions which are weaker than Li, Wei and Xu(J. Reine Angew. Math. 743: 163-211, 2018), we prove that there exists a single bubbling solution for the above system. Moreover, as the appearance of the coupling terms, we construct different forms of solutions, which makes it more interesting.
Since there are periodic boundary conditions, this expansion for the difference between the standard bubbles
and the approximate bubble can not be obtained  by using the comparison theorem as one usually does for Dirichlet boundary condition. To overcome this difficulty, we will use the Green's function of $-\Delta$ in $\Omega$ with periodic boundary conditions which helps us find the approximate bubble. Due to the lack of the Sobolev inequality, we will introduce a suitable weighted space to carry out the reduction.

\medskip\noindent
{\bf Keywords:} elliptic system, periodic boundary, critical exponent, synchronized vector.

\medskip\noindent
{\bf AMS:} 35J60; 35B33.
\end{abstract}
\maketitle
\section{Introduction}
In this paper, we will consider the following critical problem with periodic boundary conditions
\begin{align}\begin{split}\label{eqs1.1}
\begin{cases}
-\Delta u=K_1(y)u^{2^*-1}+\frac{1}{2} u^{\frac{2^*}{2}-1}v^\frac{2^*}{2}, \,\,\,\,\,y\in\Omega,\,\,\,\,\,u>0,\cr
-\Delta v=K_2(y)v^{2^*-1}+\frac{1}{2} v^{\frac{2^*}{2}-1}u^\frac{2^*}{2}, \,\,\,\,\,y\in\Omega,\,\,\,\,\,v>0,\cr
u(y'+Le_j,y'')=u(y), \,\,\,\,\,\frac{\partial u(y'+Le_j,y'')}{\partial y_j}=\frac{\partial u(y)}{\partial y_j}, \,\,\,\,\,if\,\, y'=-\frac{L}{2}e_j,\,\,\,j=1, \ldots, k,\cr
v(y'+Le_j,y'')=v(y), \,\,\,\,\,\frac{\partial v(y'+Le_j,y'')}{\partial y_j}=\frac{\partial v(y)}{\partial y_j}, \,\,\,\,\,if\,\, y'=-\frac{L}{2}e_j,\,\,\,j=1, \ldots, k,\cr
u,v \to 0 \,\,as \,\,|y''|\to \infty,
\end{cases}
\end{split}
\end{align}
where $2^*=\frac{2N}{N-2}$, $N \ge 5$, $L>0$ is a constant, $\Omega$ is a strip defined as
$$\Omega=\Bigl\{(y',y''):|y_i| \le \frac{L}{2},\,i=1,\ldots,k,\,y' \in \mathbb{R}^{k},\,y''\in \mathbb{R}^{N-k}\Bigr\},$$
and $e_1=(1,0,\ldots,0)$, $e_2=(0,1,\ldots,0), \ldots , e_N=(0,0,\ldots,1)$.

For the general scalar curvature equation,
\begin{align}\label{weqs1.2}
-\Delta u=K(x)u^{\frac{N+2}{N-2}},\,\,u>0,\,\,\hbox{in}\,\,\R^N,
\end{align}
when $N=3$, Yan \cite{Yan} proved that there was no solution whose critical value is close to the mountain pass value, under the assumption that $K(x)$ has a sequence of local maximum points $p_j\rightarrow+\infty$, satisfying
\begin{align*}
K(x)=K_j-a_j|x-p_j|^\beta+O(|x-p_j|^{\beta+\sigma}),\,\,x\in B_\delta(p_j),
\end{align*}
where $\beta\in (N-2,N)$, $a_j>0$, $\delta>0$ are some small constants, and the equation has a solution concentrating at any two points.
 This result can be extended to the existence of $n$-bubbling solutions for any integer $n>0$. When $K(x)$ is positive and periodic, Li \cite{Li-Yan-2,Li-Yan-3} proved that \eqref{weqs1.2} has infinitely many multi-bump solutions for $n\geq3$ by gluing approximate solutions with masses concentrating near isolated sets of maximum points of $K(x)$.  When $K(x)$ is positive and periodic, Li et al. constructed in \cite{Li-Wei} multi-bump solutions with mass concentrating near critical point of $K$ including saddle points, see also \cite{Xu}. When $K(x)$ is a positive radial function with a strict local maximum at $|x|=r_0>0$ and satisfies
$$
K(r)=K(r_0)-C_0|r-r_0|^m+O(|r-r_0|^{m+\theta})
$$
for some constant $C_0>0$, $\theta>0$ and $m\in [2,n-2)$ near $|x|=r_0$. Wei and Yan \cite{Wei-Yan} constructed solutions with a large number bumps concentrating near the sphere $|x|=r_0$ for $n\geq 5$. Also, in \cite{Li-Wei}, they constructed multi-bump solutions of \eqref{weqs1.2} near critical point of $K(x)$ and bumps can be placed on arbitrary many or even infinitely many lower dimensional lattice points. More precisely, $K(x)$ satisfies the following conditions \\
$(H_1):0<\inf\limits_{\R^N}K\leq \sup\limits_{\R^N}K<+\infty$;\\
$(H_2):K\in C^1(\R^N)$, $K$ is 1-periodic in its first $k$ variables;\\
$(H_3):0$ is a critical point of $K$ satisfying: there exists some real number $\beta\in(n-2,n)$ such that near $0$,
$K(x)=K(0)+\sum\limits_{i=1}^na_i|x_i|^\beta+R(x)
$
where $a_i\neq0$, $\sum\limits_{i=1}^na_i<0$ and $R(y)$ is $C^{[\beta]-1,1}$ near $0$ and satisfies $\sum\limits_{s=0}^{[\beta]}|\nabla^sR(y)||y|^{-\beta+s}=o(1)$, as $y$ tends to $0$. Here $\nabla^s$ denotes all possible partial derivatives of orders $s$.

Moreover, in \cite{Guo-Yan}, Guo and Yan proved that the prescribed scalar curvature problem in $\R^N$ has solutions which are periodic in some variables, if the scalar curvature $K(y)$ is periodic, where $\Omega$ satisfying the periodic boundary condition. Also, $K(x)$ satisfies the condition is different in $(A_2)$ with \cite{Li-Wei}, since $\beta=\min\limits_{1\leq i\leq N}\beta_i\leq \beta_M:=\max\limits_{1\leq i\leq N}\beta_i<\beta(1+\frac{1}{N-2})$.
Also Guo et al. deal with the local uniqueness and non degeneracy of multi-bubble solutions of \eqref{weqs1.2} in \cite{G-P-Y} and \cite{Guo-Mu-Peng-Yan} respectively.
More results about the critical exponent of equation can refer to \cite{Loins,Loins2,P-W-Y}.

Systems of nonlinear Schr\"odinger equations have been the subject of extensive mathematical studies in recent years, for example, \cite{l-w,l-w2,liu-w,sirakov,t-v,t-w,w-y}. We also refer to \cite{c-z,c-z2,g-l-w,p-p-w,p-w} for more references therein about systems with both critical and subcritical exponents. So far, the existence results of using the Lyapunov-Schmidt reduction argument to construct concentrated solutions of an elliptic system have been basically about lower dimensional cases with $N<5$, such as \cite{g-l-w,p-p-w} . We further point out that Peng, Peng and Wang \cite{p-p-w} obtained a uniqueness result on the least energy solution and showed that a manifold of the synchronized type of positive solutions is non-degenerate for system.
Also, Guo, Wang and Wang \cite{guo-wang-wang} using the reduction method considered the synchronized solutions in higher dimensions.
Very recently, Guo et al. \cite{Guo-Wu-Yan} and Guo and Wu \cite{Guo-Wu2} considered the system of Hamiltonian type in a strip in $\R^N$, satisfying the periodic boundary condition for the first $k$ variables.

It seems difficult to find a solution for \eqref{eqs1.1} in the variational framework, because the following inequality does not hold for all the functions $u$ satisfying the boundary condition in \eqref{eqs1.1}
\begin{align}\label{weqs1.3}
\Bigl(\int_{\Omega}|u|^{2^*}\Bigr)^{\frac{1}{2^*}}\leq C\Bigl(\int_{\Omega}|\nabla u|^2\Bigr)^{\frac{1}{2}}.
\end{align}
Indeed, if we insert $u=\varphi(y_{k+1},\cdots,y_N)(\sin\frac{2\pi y_1}{L}+n)\cdots(\sin\frac{2\pi y_k}{L}+n)$ into \eqref{weqs1.3}, where $\varphi\in C_0^\infty(\R^{N-k})$, letting $n\rightarrow+\infty$, it yields
\begin{align*}
\Bigl(\int_{\R^{N-k}}|\varphi|^{2^*}dx\Bigr)^{\frac{1}{2^*}}\leq C\Bigl(\int_{\R^{N-k}}|\nabla \varphi|^2\Bigr)^{\frac{1}{2}},\,\,\,\varphi\in C_0^\infty(\R^{N-k}),
\end{align*}
which is clearly untrue.

In this paper,we will construct directly a solution $(u,v)$ for \eqref{eqs1.1} if $L>0$ is a large integer and
$K_1(x)$, $K_2(x)$ satisfy the following conditions:

$(A_1)$ $K_1$ and $K_2$ are both 1-periodic with respect to their first $k$ variables;

$(A_2)$ $K_1$, $K_2 \in L^\infty(\R^N)$, and there exist constants $a_{li}\ne 0$, $\beta_{li}\in (N-2,N)$, $l=1,2$, $i=1,\ldots,N$, such that
$\beta_l:=\min\limits_{1\le i\le N} \beta_{li} \le \beta_{lM}:=\max\limits_{1\le i\le N} \beta_{li} \le \beta_l(1+\frac{1}{N-2}),$

$$K_1(x)=1+\sum_{i=1}^{N} a_{1i} |x_i|^{\beta_{1i}}+O(|x|^{\beta_{1M}+\sigma_1}),\,\,\,x\in B_{\delta}(0)$$
and
$$K_2(x)=1+\sum_{i=1}^{N} a_{2i} |x_i|^{\beta_{2i}}+O(|x|^{\beta_{2M}+\sigma_2}),\,\,\,x\in B_{\delta}(0),$$
where $\delta>0$ and $\sigma_1, \sigma_2>0$ are some small constants. Furthermore, $\sum_{j\in \{j:\beta_{lj}=\beta_l\}} a_{lj}<0$.
\begin{theorem}\label{th1.1}
Suppose that $K_1$, $K_2$ satisfy the conditions $(A_1)$ and $(A_2)$. If $N\ge5$ and $1\le k<\frac{N-2}{2}$, then \eqref{eqs1.1} has a solution $(u_L,v_L)$, if the integer $L>0$ is large enough.
\end{theorem}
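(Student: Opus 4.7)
The plan is to construct $(u_L,v_L)$ as a perturbation of a synchronized one-bubble ansatz, via a Lyapunov--Schmidt reduction in a suitable weighted space, following the general strategy of Li--Wei--Xu and Guo--Yan (adapted to the periodic strip $\Omega$ and to the coupled system). Since the standard Aubin--Talenti bubble
\[
U_{\mu,y}(x)=\frac{[N(N-2)]^{(N-2)/4}\mu^{(N-2)/2}}{(1+\mu^2|x-y|^2)^{(N-2)/2}}
\]
is not periodic in the first $k$ variables, I would first build an \emph{approximate bubble} $PU_{\mu,y}$ that satisfies the periodic boundary condition. The key point is to replace the standard Newton kernel by the Green's function $G_L(x,y)$ of $-\Delta$ on $\Omega$ with periodic boundary condition in the first $k$ variables and decay in the last $N-k$, and define $PU_{\mu,y}$ by a suitable convolution so that its trace and normal derivative on the faces $y_i=\pm L/2$ match. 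Writing $\varphi_{\mu,y}:=U_{\mu,y}-PU_{\mu,y}$, an expansion of $\varphi_{\mu,y}$ in terms of $G_L$ (replacing the comparison-principle argument available in the Dirichlet case) provides the pointwise estimate needed later for the energy expansion.

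Next, I would make the \emph{synchronized ansatz} $(u,v)=(\alpha PU_{\mu,y}+\phi_1,\beta PU_{\mu,y}+\phi_2)$, where $(\alpha,\beta)$ is the unique positive solution of the algebraic system
\[
\alpha=K_1(0)\alpha^{2^*-1}+\tfrac12\alpha^{2^*/2-1}\beta^{2^*/2},\qquad
\beta=K_2(0)\beta^{2^*-1}+\tfrac12\beta^{2^*/2-1}\alpha^{2^*/2},
\]
which linearises the coupling at the concentration point. The correction $(\phi_1,\phi_2)$ is sought in a weighted space with norm $\|\phi\|_{*}=\sup_{x\in\Omega}(1+\mu|x-y|)^{\tau}\mu^{-(N-2)/2}|\phi(x)|$ for an appropriate exponent $\tau\in(0,N-2)$; the weight compensates for the absence of the Sobolev inequality \eqref{weqs1.3} on $\Omega$ and controls the decay of the bubble away from the concentration point. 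Orthogonality is imposed against the usual kernel of the linearised operator at $U_{\mu,y}$ (one direction from the dilation and $N$ directions from the translation, restricted in a manner compatible with the periodicity in $y'$). A fixed-point/contraction argument then produces a unique $(\phi_1,\phi_2)=(\phi_1(\mu,y),\phi_2(\mu,y))$ solving the infinite-dimensional part of the problem, together with the pointwise estimate $\|\phi_l\|_{*}=O(\mu^{-\min(\beta_l,\beta_{lM}/2+\sigma)})$ up to logs.

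Then I would derive a finite-dimensional reduction: the energy functional associated with \eqref{eqs1.1} evaluated at $(\alpha PU+\phi_1,\beta PU+\phi_2)$ is expanded as
\[
I(\mu,y)=A_0+\sum_{l=1,2}\sum_{i=1}^{N}B_{li}\,a_{li}\,\mu^{-\beta_{li}}\,\bigl|y_i\bigr|^{\beta_{li}}\;+\;\text{(periodic-correction term involving }G_L\text{)}\;+\;\text{lower order},
\]
with positive constants $A_0,B_{li}$. The condition $\sum_{j:\beta_{lj}=\beta_l}a_{lj}<0$ guarantees that the leading part has a strict local maximum in $\mu$ at some $\mu=\mu_L\to\infty$ as $L\to\infty$, while the periodic correction is uniformly small when $L$ is large because $G_L$ differs from the free Green's function by a term that decays in $L$; the assumption $1\le k<(N-2)/2$ ensures the strip remains sufficiently ``thin'' for these corrections not to swamp the $\mu^{-\beta_{li}}$ terms. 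The matching condition $\beta_{lM}\le\beta_l(1+\frac1{N-2})$ is exactly what is needed to make the error from the nonlinear coupling dominate neither the expansion nor the kernel estimate, which is the same role it plays in \cite{Guo-Yan}.

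The main obstacle, in my view, is not the reduction itself but the construction and sharp expansion of the periodic correction $\varphi_{\mu,y}=U_{\mu,y}-PU_{\mu,y}$: without a comparison principle one must extract pointwise decay of $\varphi_{\mu,y}$ from an explicit Green's function representation, and one must show that the resulting contribution to $I(\mu,y)$ is genuinely of lower order than the $\mu^{-\beta_{li}}$ terms that carry the existence argument. Once these estimates are in hand, existence of the critical point $(\mu_L,y_L)$ of the reduced functional follows from a topological/degree argument on a small box $|\mu-\mu_L|\le\mu_L^{-\eta}$, $|y|\le\mu_L^{-1/\beta_l+\eta}$, since the leading term is a coercive sum of concave functions of $|y_i|^{\beta_{li}}$ with strict maximum. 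Combining this critical point with the reduction produces the sought solution $(u_L,v_L)$ for all integers $L$ sufficiently large.
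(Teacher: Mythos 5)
Your overall strategy --- synchronized ansatz built from the periodic Green's function, correction in a weighted space, Lyapunov--Schmidt reduction --- is the one the paper follows, and your identification of the main obstacle (the pointwise expansion of $\varphi_{\mu,y}=U_{\mu,y}-PU_{\mu,y}$ in the absence of a comparison principle) is exactly right. However, there are two genuine gaps in the details that would prevent the argument from closing.

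First, the weighted norm you propose, $\|\phi\|_{*}=\sup_{x\in\Omega}(1+\mu|x-y|)^{\tau}\mu^{-(N-2)/2}|\phi(x)|$, is a single-bubble weight centered at $y$; it does not respect the periodic structure. The paper's norm is
$$\|u\|_{*}=\sup _{y \in \Omega}\Bigl(\sigma(y) \sum_{j=0}^{\infty} \frac{\mu^{\frac{N-2}{2}}}{(1+\mu|y-x_{j}|)^{\frac{N-2}{2}+\tau}}\Bigr)^{-1}|u(y)|,\qquad x_j=x-LP_j,$$
with the extra factor $\sigma(y)=\min\{1,(\frac{1+\mu|y-x|}{\mu})^{\tau-1}\}$. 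Because the Green's function $G(y,z)=\sum_j\Gamma(y+LP_j,z)$ is itself a lattice sum, the boundedness of $(-\Delta)^{-1}:\mathbf{Y}\to\mathbf{X}$ (Lemma~3.1) and the compactness of the linear operator $T$ (Lemma~3.5) both rely on a weight that sums over the translates $x_j$. The additional factor $\sigma(y)$ is what gives the improved bound on the error for $k\ge 2$; the paper stresses that the reduction \emph{depends very much on the correct choice of the weight space}, and with your single-bubble norm Lemma~3.1 would fail near the lateral faces $|y_i|=L/2$ and Lemma~3.4/3.5 would not close.

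Second, your reading of the sign condition is not correct. Assumption $(A_2)$ requires $a_{li}\ne 0$ and $\sum_{j:\beta_{lj}=\beta_l}a_{lj}<0$, not $a_{li}<0$ for all $i$. Some $a_{li}$ may be positive, so the leading part of the reduced expansion is \emph{not} a coercive sum of concave functions of $|y_i|^{\beta_{li}}$ with a strict interior maximum; a degree argument predicated on that concavity would fail. The sign condition $\sum_{j:\beta_{lj}=\beta_l}a_{lj}<0$ is used only in the $\mu$--equation (balancing the $\mu^{-\beta_l}$ potential term against the $\mu^{-(N-2)}L^{-(N-2)}\sum_j\Gamma(P_j,0)$ periodic interaction to produce $\mu_L\sim L^{(N-2)/(\beta-N+2)}$), and the $x_h$--equations are solved for $x_h$ close to $0$ directly from the explicit Pohozaev-type identities, without any concavity of the reduced energy in $y$. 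In the paper this is handled by writing the reduced system as \eqref{eqs4.29}--\eqref{eqs4.30} and solving it directly rather than by maximizing a reduced energy.

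Finally, a minor inaccuracy: the paper's synchronized profile is $(sW_{x,\mu},\kappa s W_{x,\mu})$ with $s,\kappa$ determined by $s^{2^*-2}=\frac{2}{2+\kappa^{2^*/2}}$ and $2+\kappa^{2^*/2}-\kappa^{2^*/2-1}-2\kappa^{2^*-1}=0$, and the corresponding correction estimate obtained in Proposition~4.1 is $\|(\omega_1,\omega_2)\|_{*}\le C(\mu^{-2}+|\mu x|^{\beta}\mu^{-(\beta-\tau+1)})$; your quoted estimate $O(\mu^{-\min(\beta_l,\beta_{lM}/2+\sigma)})$ does not match this and would not follow from the estimates on $\ell$ in the chosen weight.
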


A direct consequence of Theorem \ref{th1.1} is the existence of solutions for
\begin{align}\begin{split}\label{eq1.2}
\begin{cases}
-\Delta u=K_1(y)u^{2^*-1}+\frac{1}{2} u^{\frac{2^*}{2}-1}v^\frac{2^*}{2}, \,\,\,\,\,u>0\,\,\hbox{in}\,\,\R^N,\cr
-\Delta v=K_2(y)v^{2^*-1}+\frac{1}{2} v^{\frac{2^*}{2}-1}u^\frac{2^*}{2}, \,\,\,\,\,v>0\,\,\hbox{in}\,\,\R^N,\cr
\end{cases}
\end{split}
\end{align}
which are periodic in its first k variables.

\begin{corollary}\label{cor1.2}
Under the same conditions as in Theorem \ref{th1.1}, \eqref{eq1.2} has infinitely many solutions, which are $L$-periodic in its first $k$ variables for any large integer $L>0$.
\end{corollary}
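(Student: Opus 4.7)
The plan is to obtain solutions on $\mathbb{R}^N$ directly from the strip solutions produced by Theorem~\ref{th1.1} via periodic extension. For each sufficiently large integer $L$, Theorem~\ref{th1.1} supplies a pair $(u_L, v_L)$ solving the strip problem \eqref{eqs1.1}. The boundary conditions written in \eqref{eqs1.1} state precisely that both $u_L$ and its normal derivative $\partial_{y_j} u_L$ coincide on opposite faces $y_j' = \pm L/2$ for $j = 1, \ldots, k$, and likewise for $v_L$. This is the exact matching data needed to extend $(u_L, v_L)$ periodically in the first $k$ variables to a pair $(\tilde u_L, \tilde v_L)$ defined on all of $\mathbb{R}^N$ that is $C^1$ across every translate of the boundary hyperplanes.

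Next, since hypothesis $(A_1)$ ensures that $K_1, K_2$ are $1$-periodic in the first $k$ variables and $L \in \mathbb{Z}$, they are automatically $L$-periodic in those variables, so the coefficients of the equations match across the boundary hyperplanes. Combined with the fact that $(\tilde u_L, \tilde v_L)$ solves \eqref{eq1.2} in the interior of each fundamental strip and is $C^1$ across the interfaces, this shows that $(\tilde u_L, \tilde v_L)$ is a weak solution of \eqref{eq1.2} on the entire $\mathbb{R}^N$. Standard elliptic bootstrap then promotes it to a smooth classical positive solution which is, by construction, $L$-periodic in $y_1, \ldots, y_k$.

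To extract infinitely many pairwise distinct solutions, I would let $L$ range over an infinite sequence of large integers, for concreteness the primes $p_1 < p_2 < \cdots$ chosen beyond the threshold of Theorem~\ref{th1.1}. The extended solution $(\tilde u_{p_n}, \tilde v_{p_n})$ is $p_n$-periodic but non-constant (by positivity of the components and by the prescribed bubble concentration on the fundamental strip supplied by the construction in Theorem~\ref{th1.1}), so any strictly smaller period would have to be a proper divisor of $p_n$, which is impossible because $p_n$ is prime. Hence the family $\{(\tilde u_{p_n}, \tilde v_{p_n})\}_{n \ge 1}$ consists of pairwise distinct solutions of \eqref{eq1.2}.

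The main technical effort is entirely absorbed into the proof of Theorem~\ref{th1.1}; the corollary itself is essentially free. The only item requiring even a moment's thought is the distinctness of the solutions, and the prime-period trick just described handles that cleanly. No additional analytic machinery beyond periodic extension, elliptic regularity, and the strict positivity of the constructed bubbles is needed.
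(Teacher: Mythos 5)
Your core approach --- periodic extension of the strip solution $(u_L,v_L)$ to $\mathbb{R}^N$, using the matching of the trace and normal derivative in \eqref{eqs1.1}, the $1$-periodicity of $K_1,K_2$ from $(A_1)$ together with $L\in\mathbb{Z}$, elliptic regularity across the interfaces, and the positivity already established at the end of the proof of Theorem~\ref{th1.1} --- is exactly the intended route; the paper treats this as immediate and gives no separate argument.

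The prime-period argument for distinctness, however, has a genuine gap. For a continuous non-constant periodic function, the set of periods is $T_0\mathbb{Z}$ where $T_0$ is the minimal period, so $p_n$-periodicity only forces $T_0=p_n/m$ for some positive integer $m$; this quantity need not be an integer, so the phrase ``a proper divisor of $p_n$'' is misapplied and primality of $p_n$ does not by itself pin $T_0$ to $p_n$. Even restricting to integer periods, $1$ is a proper divisor of every prime and is not ruled out by non-constancy alone: a non-constant function can perfectly well be $1$-periodic. The cleanest fix is to drop the arithmetic entirely and observe that, by the conclusion of Proposition~\ref{pro4.1} and the choice made at the end of Section 4, $\mu_L = L^{\frac{N-2}{\beta-N+2}}(C_0+o(1)) \to\infty$ as $L\to\infty$ since $\beta\in(N-2,N)$, so $\|u_L\|_{L^\infty}\sim \mu_L^{\frac{N-2}{2}}$ is unbounded and the solutions for sufficiently separated values of $L$ are manifestly distinct. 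Alternatively, your prime idea can be repaired: if $(\tilde u_{p_n},\tilde v_{p_n})=(\tilde u_{p_m},\tilde v_{p_m})$ with $p_n\ne p_m$, the common function would be periodic with period $\gcd(p_n,p_m)=1$, and one must then explicitly invoke the bubble profile (a peak of height $\sim\mu_{p_n}^{(N-2)/2}$ at $x_L$ near $0$, against decay of order $(1+\mu_{p_n}|y-x_L|)^{-(N-2)}$ away from $x_L$) to contradict $1$-periodicity; merely asserting ``non-constant'' does not suffice.
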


Since we consider periodic boundary conditions, this expansion for the difference between the standard bubbles
and the approximate bubble can not be obtained  by using the comparison theorem as one usually does for Dirichlet boundary condition. To overcome this difficulty, we find the Green's function for $-\Delta$ with periodic boundary condition and the approximate bubble is built via Green's function.
Also, due to the lack of the Sobolev inequality mentioned before, it is not suitable to work in a standard Sobolev space. Based on the goal expansion for the approximate bubble, we introduce a suitable weighted space (see \eqref{eqs1.7} later) to carry out the reduction argument.
We use Fredholm theorem to discuss the invertibility of the linear operator for the approximate bubble. This step is rather difficult and it depends very much on the correct choice of the weight space.

We now outline the proof of Theorem \ref{th1.1}. Let
$$
W_{x,\mu}=\frac{C_N\mu^\frac{N-2}{2}}{(1+\mu^2|y-x|^2)^\frac{N-2}{2}},\,\,\,x\in \R^N,\,\,\,\mu>0,
$$
where $C_N=(N(N-2))^\frac{N-2}{4}$. Then $W_{x,\mu}$ gives all the solutions to the problem
$$
-\Delta u=u^\frac{N+2}{N-2},\,\,\,u>0 \,\, \hbox{in} \,\, \R^N.
$$

To prove Theorem \ref{th1.1}, we will use the finite dimensional reduction method to construct a solution $(u_L,v_L)$ for \eqref{eqs1.1}, which satisfies $(u_L,v_L)\approx (U_{x,\mu},V_{x,\mu})$ for some $x$ close to 0 and $\mu>0$ large. Denote
$$(U_{x_0,\lambda},V_{x_0,\lambda})=(sW_{x,\mu},tW_{x,\mu}),\,\,\,t=\kappa s,\,\,\,x_0 \in \R^N,\,\,\,\lambda>0$$
is a non-degenerate solution (see \cite{p-p-w}, Theorem 1.1) of the following equations
\begin{eqnarray*}\begin{cases}
-\Delta u_1=u_{1}^{2^*-1}+\frac{1}{2} u_{1}^{\frac{2^*}{2}-1}u_{2}^\frac{2^*}{2}, \,\,\,\,\,y\in\R^N,\cr
-\Delta u_2=u_{2}^{2^*-1}+\frac{1}{2} u_{2}^{\frac{2^*}{2}-1}u_{1}^\frac{2^*}{2}, \,\,\,\,\,y\in\R^N,\cr
 u_1, u_2>0,\,\,\, u_1, u_2\in D^{12}(\R^N),
\end{cases}
\end{eqnarray*}
where $s$ and $\kappa$ satisfy that $s^{2^*-2}=\frac{2}{2+\kappa^\frac{2^*}{2}}$, $2+\kappa^\frac{2^*}{2}-\kappa^{\frac{2^*}{2}-1}-2\kappa^{2^*-1}=0.$

In view of the boundary conditions in \eqref{eqs1.1}, we construct the approximate solution $(PU_{x,\mu}, PV_{x,\mu})$ which solves the following problem
\begin{equation}\begin{cases}\label{eqs1.2}
-\Delta u=U_{x,\mu}^{2^*-1}+\,\frac{1}{2} U_{x,\mu}^{\frac{2^*}{2}-1}V_{x,\mu}^\frac{2^*}{2}, \,\,\,\,\,\text{in}\,\Omega,\cr
-\Delta v=V_{x,\mu}^{2^*-1}+\,\frac{1}{2} V_{x,\mu}^{\frac{2^*}{2}-1}U_{x,\mu}^\frac{2^*}{2}, \,\,\,\,\,\text{in}\,\Omega,\cr
(u,v)\, \text{\,satisfies the boundary condition in \,\eqref{eqs1.1}}.
\end{cases}
\end{equation}

To see that \eqref{eqs1.2} is solvable, we need to find the Green's function  $G(y, z)$  of  $-\Delta$  in  $\Omega$  satisfying the boundary conditions in \eqref{eqs1.1}.

For any integer  $k \in[1, N]$, we define the  $k$-dimensional lattice by:
 $$
 Q_{k}:=\Bigl\{(y_{1}, \ldots, y_{k}, 0): y_{i} \,\,\hbox{is}\,\, \hbox{an}\,\, \hbox{integer},  i=1, \ldots, k\Bigr\} , \quad \hbox{where} \,\, 0 \in \mathbb{R}^{N-k}.
 $$
We order all the points in  $Q_{k}$  as  $\left\{P_{j}\right\}_{j=1}^{\infty}$  and  $P_{0}=0$. The Green's function $G(y,z)$ of $-\Delta$ in $\Omega$ satisfying the boundary conditions in \eqref{eqs1.1} is given by
$$G(y,z)=\sum_{j=0}^{\infty}\Gamma(y+LP_j,z),\,\,\,y,\,z\in \Omega,$$
where
$$\Gamma(y,z)=\frac{1}{(N-2)\omega_{N-1}|y-z|^{N-2}}$$
and $\omega_{N-1}$ is the area of $\mathbb{S}^{N-1}$. Then
\begin{equation}\label{eqs1.5}
\binom{PU_{x,\mu}}{PV_{x,\mu}}=\binom{\ds\int_\Omega G(y,z)(U_{x,\mu}^{2^*-1}+\frac{1}{2} U_{x,\mu}^{\frac{2^*}{2}-1}V_{x,\mu}^\frac{2^*}{2})dz}{\ds\int_\Omega G(y,z)(V_{x,\mu}^{2^*-1}+\frac{1}{2} V_{x,\mu}^{\frac{2^*}{2}-1}U_{x,\mu}^\frac{2^*}{2})dz}
\end{equation}
solves \eqref{eqs1.2}. Let us also point out that bounded solutions of \eqref{eqs1.2} are unique. In fact, for any bounded solutions $(u_1,v_1)$ and $(u_2,v_2)$ of \eqref{eqs1.2}, $(u,v)=(u_1-u_2,v_1-v_2)$, then $-\Delta u=0$, $-\Delta v=0$ in $\Omega$, which can be extended periodically to $\R^N$. Thus $(u,v)$ must be constant, which, together with $(u,v) \to 0$ as $|y''| \to +\infty$, gives $(u,v)=(0,0)$.
\begin{remark}
We aim to construct a solution of the form
$$(u_L,v_L)=(PU_{x_L,\mu_L}+\omega_{1L},PV_{x_L,\mu_L}+\omega_{2L})$$
for some $x_L$ close to 0, and some large $\mu_L>0$, where the correction term $(\omega_{1L},\omega_{2L})$ is small in some norm.
\end{remark}
Using \eqref{eqs1.5}, we can prove
\begin{equation}\label{eqs1.6}
PU_{x,\mu}=U_{x,\mu}+\frac{A_1}{\mu^\frac{N-2}{2}}\sum_{j=1}^{\infty}\frac{1}{|y-x-LP_j|^{N-2}}\Bigl(\int_{\R^N} U_{0,1}^{2^*-1}+o(1)\Bigr)
\end{equation}
and
\begin{equation}\label{eqs1.6(1)}
PV_{x,\mu}=V_{x,\mu}+\frac{A_2}{\mu^\frac{N-2}{2}}\sum_{j=1}^{\infty}\frac{1}{|y-x-LP_j|^{N-2}}\Bigl(\int_{\R^N} V_{0,1}^{2^*-1}+o(1)\Bigr),
\end{equation}
where $A_1=\kappa^{2^*-1}A_2$.

To take \eqref{eqs1.6} and \eqref{eqs1.6(1)} into account, we define the following norm:
\begin{equation}\label{eqs1.7}
\|u\|_{*}=\sup _{y \in \Omega}\Bigl(\sigma(y) \sum_{j=0}^{\infty} \frac{\mu^{\frac{N-2}{2}}}{\left(1+\mu\left|y-x_{j}\right|\right)^{\frac{N-2}{2}+\tau}}\Bigr)^{-1}|u(y)|,
\end{equation}
where $x_j=x-LP_j$, $x\in B_1(0)$, $\sigma(y)=\min \{1,(\frac{1+\mu|y-x|}{\mu})^{\tau-1}\}$, $\tau=\frac{N-2}{2}-\vartheta$, and $\vartheta>0$ is a fixed small constant.
In order to improve the error term $\omega_L$ in the case $k\geq2$, we add an extra weight $\sigma(y)$ in $\|u\|_{*}$ which was used in \cite{Guo-Peng-Yan, Guo-Wu, Guo-Wu-Yan, Guo-Yan, Li-Wei}.

We point out that with our choice  $\tau=\frac{N-2}{2}-\vartheta$, it holds that for  $y \in \Omega \backslash B_{1}(0)$,
\begin{align*}
\sum_{j=0}^{\infty} \frac{1}{|y-x_{j}|^{\tau}} & \leq \sum_{j=0}^{\infty} \frac{C}{\left(1+\left|y-x_{j}\right|\right)^{\tau}} \leq C \int_{\mathbb{R}^{N}} \frac{1}{\left(1+|y-z|^{2}\right)^{\frac{\tau}{2}}} d z \\
& =C \int_{\mathbb{R}^{N}} \frac{1}{\left(1+\left|y^{\prime \prime}\right|^{2}+|z|^{2}\right)^{\frac{\tau}{2}}} d z \leq \frac{C}{\left(1+\left|y^{\prime \prime}\right|\right)^{\tau-k}},
\end{align*}
since  $k<\frac{N-2}{2}$, and
$$\left|y-x_{j}\right| \geq \frac{1}{4}\left(1+\left|y-x_{j}\right|\right), \quad y \in \Omega \backslash B_{1}(0).$$

We define the space  $\mathbf{X}$  as
$$\mathbf{X}=\Bigl\{(\phi_1,\phi_2): (\phi_1,\phi_2) \text { satisfies the boundary conditions in (1.1) and }\|(\phi_1,\phi_2)\|_{*}<+\infty\Bigr\},$$
where $\|(\phi_1,\phi_2)\|_{*}:=\|\phi_1\|_{*}+\|\phi_2\|_{*}$.

We will carry out the reduction procedure in $\mathbf{X}$. For this purpose, we need to study the invertibility of the linear operator
\begin{eqnarray}\begin{cases}\label{weqs1.8}
\phi_1-(2^*-1)(-\Delta)^{-1}\Bigl(K_1(y)(PU_{x,\mu})^{2^*-2}\phi_1&-\frac{1}{N-2}(PU_{x,\mu})^{\frac{4-N}{N-2}}(PV_{x,\mu})^{\frac{N}{N-2}}\phi_1\cr
&-\frac{N}{2(N-2)}(PU_{x,\mu})^{\frac{2}{N-2}}(PV_{x,\mu})^{\frac{2}{N-2}}\phi_2\Bigr),\cr
\phi_2-(2^*-1)(-\Delta)^{-1}\Bigl(K_2(y)(PV_{x,\mu})^{2^*-2}\phi_2&-\frac{1}{N-2}(PV_{x,\mu})^{\frac{4-N}{N-2}}(PU_{x,\mu})^{\frac{N}{N-2}}\phi_2\cr
&-\frac{N}{2(N-2)}(PV_{x,\mu})^{\frac{2}{N-2}}(PU_{x,\mu})^{\frac{2}{N-2}}\phi_1\Bigr),\cr
\end{cases}
\end{eqnarray}
where $(\phi_1,\phi_2) \in \mathbf{X}$ and the linear operator  $(-\Delta)^{-1}$  is defined as
$$(-\Delta)^{-1}(f,g)=\bigl(\int_{\Omega} G(z, y)f(z)dz,\int_{\Omega} G(z, y)g(z)dz\bigl),$$
for any  $(f,g)$  satisfies $\|(f,g)\|_{**}<+\infty$, where
$$\|(f,g)\|_{**}=\|f\|_{**}+\|g\|_{**}$$
and
\begin{equation}\label{eqs1.9}
\|f\|_{**}:=\sup _{y \in \Omega}\Bigl(\sigma(y) \sum_{j=0}^{\infty} \frac{\mu^{\frac{N+2}{2}}}{\left(1+\mu\left|y-x_{j}\right|\right)^{\frac{N+2}{2}+\tau}}\Bigr)^{-1}|f(y)|.
\end{equation}
To use the Fredholm theories, we must prove that  $$(-\Delta)^{-1}\binom{K_1(y)(PU_{x,\mu})^{2^*-2}\phi_1-\frac{1}{N-2}(PU_{x,\mu})^{\frac{4-N}{N-2}}(PV_{x,\mu})^{\frac{N}{N-2}}\phi_1-\frac{N}{2(N-2)}(PU_{x,\mu})^{\frac{2}{N-2}}(PV_{x,\mu})^{\frac{2}{N-2}}\phi_2}
{K_2(y)(PV_{x,\mu})^{2^*-2}\phi_2-\frac{1}{N-2}(PV_{x,\mu})^{\frac{4-N}{N-2}}(PU_{x,\mu})^{\frac{N}{N-2}}\phi_2-\frac{N}{2(N-2)}(PV_{x,\mu})^{\frac{2}{N-2}}(PU_{x,\mu})^{\frac{2}{N-2}}\phi_1}$$  is compact in  $\mathbf{X}$. This will require some delicate estimates on the decay rate at infinity for the above function with  $(\phi_1,\phi_2) \in \mathbf{X}$. We would like to stress that compared with  \cite{Guo-Yan}, many complicated computations are involved in our study since there are coupled terms of nonlinearities in system \eqref{eqs1.1}.

The paper is organized as follows. In section 2, we will prove some estimates for the approximate solutions  $(PU_{x, \mu},PV_{x, \mu})$. These estimates play an essential role in the discussion of the solvability of the reduced finite dimensional problems. Section 3 is devoted to the study of the invertibility of the linear operator defined in \eqref{weqs1.8}. In section 4, we will prove Theorem 1.1.

\section{some essential estimates for approximate solutions}
Throughout this paper, we always assume that $(x,\mu)$ satisfies
\[
	\vert x\vert\le\frac{1}{\mu^{1+\theta}},\,\,\,\mu L^{-\frac{N-2}{\beta-N+2}}\in\Bigl[\frac{C_{0}}{2},2C_{0}\Bigr]
\]
for some $C_{0} >0$ and $\beta=\min\{\beta_1,\beta_2\}$, where $\theta >0$ is a small constant. We denote
$x_{j}=x-LP_{j},$ $j=0,1,\ldots$.

 Recall that $(PU_{x,\mu},PV_{x,\mu})$ is given in \eqref{eqs1.5}. We have
\begin{lemma}\label{lm2.1}
It holds
$$0\leq PU_{x,\mu}\leq C_{1,\kappa} \sum_{j=0}^{\infty}U_{x_{j},\mu},\,\,\,0\leq PV_{x,\mu}\leq C_{2,\kappa} \sum_{j=0}^{\infty}V_{x_{j},\mu},$$
where $C_{1,\kappa}$ and $C_{2,\kappa}$ are positive constants.
\end{lemma}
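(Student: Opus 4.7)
The plan is to exploit the synchronization $V_{x,\mu}=\kappa U_{x,\mu}$ to collapse the coupled right-hand side of \eqref{eqs1.2} to a single term, and then compare the $\Omega$-Riesz representation of $PU_{x,\mu}$ with the Newtonian-potential representation of $U_{x,\mu}$ on all of $\R^N$.

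First I would simplify the integrand in the right-hand side of the $u$-equation in \eqref{eqs1.2}. Since $V_{x,\mu}=\kappa U_{x,\mu}$, a direct algebraic manipulation gives
$$U_{x,\mu}^{2^{*}-1}+\tfrac{1}{2}U_{x,\mu}^{\frac{2^{*}}{2}-1}V_{x,\mu}^{\frac{2^{*}}{2}}=c_{1}\,U_{x,\mu}^{2^{*}-1},\qquad c_{1}:=1+\tfrac{\kappa^{2^{*}/2}}{2}>0.$$
Because $(U_{x,\mu},V_{x,\mu})=(sW_{x,\mu},\kappa sW_{x,\mu})$ solves the decoupled system on $\R^N$, this same quantity equals $-\Delta U_{x,\mu}$ on $\R^N$. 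Since $U_{x,\mu}\to 0$ at infinity and $\Gamma$ is the Newtonian kernel of $-\Delta$, this forces the Riesz-potential identity
$$U_{x,\mu}(y')=c_{1}\int_{\R^{N}}\Gamma(y',z)\,U_{x,\mu}^{2^{*}-1}(z)\,dz,\qquad y'\in\R^{N}.$$

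Next I would substitute $G(y,z)=\sum_{j\ge 0}\Gamma(y+LP_{j},z)$ into \eqref{eqs1.5} and use Tonelli (legitimate because all terms are nonnegative) to write
$$PU_{x,\mu}(y)=c_{1}\sum_{j=0}^{\infty}\int_{\Omega}\Gamma(y+LP_{j},z)\,U_{x,\mu}^{2^{*}-1}(z)\,dz.$$
Nonnegativity of each integrand immediately yields $PU_{x,\mu}\ge 0$. For the upper bound, enlarging the domain from $\Omega$ to $\R^N$ only increases each term, and the identity of the previous paragraph (applied at $y'=y+LP_{j}$) identifies the resulting integral as $U_{x,\mu}(y+LP_{j})$. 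The explicit form of $W_{x,\mu}$ then gives $U_{x,\mu}(y+LP_{j})=U_{x-LP_{j},\mu}(y)=U_{x_{j},\mu}(y)$, hence $PU_{x,\mu}(y)\le\sum_{j\ge 0}U_{x_{j},\mu}(y)$. The argument for $PV_{x,\mu}$ is word-for-word identical after swapping $(U,V,c_1)$ for $(V,U,c_2)$ with $c_{2}:=1+\tfrac{1}{2\kappa^{2^{*}/2}}$; these positive prefactors supply the constants $C_{1,\kappa},C_{2,\kappa}$ in the statement.

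The only point deserving care is the interchange of the series defining $G$ with the integrals over $\Omega$ and the subsequent extension of the domain from $\Omega$ to $\R^N$. Both are justified by the positivity of $\Gamma$ and of $U^{2^{*}-1}$, and by the absolute convergence of $\sum_{j}|y-x-LP_{j}|^{-(N-2)}$ for $y\in\Omega$, which holds because $k<N-2$ (and, \emph{a fortiori}, under the hypothesis $k<\tfrac{N-2}{2}$ of Theorem \ref{th1.1}). I do not foresee any deeper obstacle: the lemma is a structural consequence of the Green function decomposition \eqref{eqs1.5} together with the synchronized form of the ansatz.
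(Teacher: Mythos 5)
Your argument is correct and matches the paper's proof essentially line for line: collapse the coupled nonlinearity via the synchronization $V_{x,\mu}=\kappa U_{x,\mu}$, expand $G$ as the periodized Newtonian kernel, enlarge $\Omega$ to $\R^N$ by positivity, and identify the resulting integrals with translates $U_{x_j,\mu}$ via the Riesz representation of $U_{x,\mu}$. One tiny wobble worth noting: since $-\Delta U_{x,\mu}=c_1 U_{x,\mu}^{2^*-1}$, the prefactor $c_1$ actually cancels and the bound holds with constant $1$, so the closing remark that the ``positive prefactors supply $C_{1,\kappa},C_{2,\kappa}$'' is slightly off, though harmless since the lemma only asserts existence of some positive constants.
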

\begin{proof}
It follows from \eqref{eqs1.5} that
\begin{align*}
PU_{x,\mu}(y)&=\int_{\Omega}G(y,z)\Bigl(U_{x,\mu}^{2^{*}-1}+\frac{\beta}{2} U_{x,\mu}^{\frac{2^*}{2}-1}V_{x,\mu}^\frac{2^*}{2}\Bigr)dz=C_{1,\kappa} \sum_{j=0}^{\infty}\int_{\Omega}\Gamma(z+L P_{j},y)U_{x,\mu}^{2^{*}-1}dz\\
&\le C_{1,\kappa} \sum_{j=0}^{\infty}\int_{\mathbb{R}^{N}}\Gamma(z+LP_{j},y)U_{x,\mu}^{2^{*}-1}dz=C_{1,\kappa} \sum_{j=0}^{\infty}U_{x,\mu}(y+L P_{j}).
\end{align*}
Similarly, we can estimate $PV_{x,\mu}$.
\end{proof}
Let $(\varphi_1,\varphi_2)=\Bigl(U_{x,\mu}-PU_{x,\mu},V_{x,\mu}-PV_{x,\mu}\Bigr)$.
Then
$(\varphi_1,\varphi_2)$ has the following expansion.

\begin{lemma}\label{lm2.2}
For $y\in B_{1}(0),$ it holds
\begin{equation}\label{eqs2.1}
\varphi_l(y)=-B_l\sum_{j=1}^{\infty}\frac{\Gamma(x+L P_{j},y)}{\mu^{\frac{N-2}{2}}}+O\Bigl(\frac{1}{L^{N-2}\mu^{\frac{N+2}{2}}}\Bigr),
\end{equation}
\begin{equation}\label{eqs2.2}
\frac{\partial\varphi_l}{\partial x_{h}}(y)=-B_l\frac{1}{\mu^{\frac{N-2}{2}}}\sum_{j=1}^{\infty}\frac{\partial\Gamma(x+L P_{j},y)}
{\partial x_{h}}+O\Bigl(\frac{1}{L^{N-2}\mu^{\frac{N+2}{2}}}\Bigr)
\end{equation}
and
\begin{equation}\label{eqs2.3}
\frac{\partial\varphi_l}{\partial\mu}(y)=B_l\sum_{j=1}^{\infty}\frac{(N-2)\Gamma(x+LP_{j},y)}{2\mu^{\frac{N}{2}}}
+\frac{1}{\mu}O\Bigl(\frac{1}{L^{N-2}\mu^{\frac{N+2}{2}}}\Bigr),
\end{equation}
where $l=1,2$, $B_1=\ds\int_{\mathbb{R}^{N}}U_{0,1}^{2^{*-1}}$ and $B_2=\ds\int_{\mathbb{R}^{N}}V_{0,1}^{2^{*-1}}$.
\end{lemma}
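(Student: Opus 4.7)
The plan is to exploit integral representations on both $U_{x,\mu}$ and $PU_{x,\mu}$ and peel off the leading periodic-kernel contribution. Writing
\[
f_1=U_{x,\mu}^{2^{*}-1}+\tfrac{1}{2}U_{x,\mu}^{\frac{2^{*}}{2}-1}V_{x,\mu}^{\frac{2^{*}}{2}},\qquad f_2=V_{x,\mu}^{2^{*}-1}+\tfrac{1}{2}V_{x,\mu}^{\frac{2^{*}}{2}-1}U_{x,\mu}^{\frac{2^{*}}{2}},
\]
the identity $-\Delta U_{x,\mu}=f_1$ on $\R^{N}$ gives $U_{x,\mu}(y)=\int_{\R^{N}}\Gamma(y,z)f_1(z)\,dz$, while \eqref{eqs1.5} yields $PU_{x,\mu}(y)=\sum_{j\geq 0}\int_{\Omega}\Gamma(y+LP_j,z)f_1(z)\,dz$. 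Subtracting and splitting off the $j=0$ term gives
\[
\varphi_1(y)=\int_{\R^{N}\setminus\Omega}\Gamma(y,z)f_1(z)\,dz-\sum_{j=1}^{\infty}\int_{\Omega}\Gamma(y+LP_j,z)f_1(z)\,dz,
\]
with the analogous formula for $\varphi_2$. Using the algebraic relations on $s,\kappa$ stated before \eqref{eqs1.2} one checks directly that $\int_{\R^{N}}f_l\,dz$ equals $B_l\,\mu^{-(N-2)/2}$, so this is the constant that will appear as the prefactor.

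For the sum, I would Taylor expand the kernel about $z=x$ for each $j\geq 1$:
\[
\Gamma(y+LP_j,z)=\Gamma(y+LP_j,x)+\nabla_z\Gamma(y+LP_j,x)\cdot(z-x)+O\!\Bigl(\frac{|z-x|^{2}}{|y+LP_j-x|^{N}}\Bigr).
\]
The zeroth-order piece factors out and contributes $B_l\mu^{-(N-2)/2}\sum_{j\geq 1}\Gamma(y+LP_j,x)$. Since $f_l$ is radially symmetric about $x$, the first-order piece integrates to zero (up to a symmetric-tail correction of admissible size). The symmetry $\{P_j\}_{j\geq 1}=\{-P_j\}_{j\geq 1}$ of the lattice converts $\sum_{j\geq 1}\Gamma(y+LP_j,x)$ into $\sum_{j\geq 1}\Gamma(x+LP_j,y)$, matching the leading term of \eqref{eqs2.1}.

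The main technical obstacle is controlling the second-order remainder, because $f_l$ has only polynomial decay and the naive integral $\int|z-x|^{2}f_l$ diverges logarithmically. I would split the $z$-integration at $|z-x|=\rho$ for a fixed moderate $\rho$. On the inner part $\{|z-x|\leq\rho\}$, the rescaling $w=\mu(z-x)$ shows that the remainder contributes at most $C\mu^{-(N+2)/2}|y+LP_j-x|^{-N}$ times a logarithm; on the outer part one uses the tail bound $\int_{|z-x|>\rho}f_l\,dz\leq C\mu^{-(N+2)/2}\rho^{-2}$ combined with $|\Gamma(y+LP_j,z)|\leq C(L|P_j|)^{-(N-2)}$. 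Summing over $j$ (the series $\sum|P_j|^{-N}$ and $\sum|P_j|^{-(N-2)}$ both converge because $k<(N-2)/2$) yields the required $O(L^{-(N-2)}\mu^{-(N+2)/2})$ error. The boundary contribution $\int_{\R^{N}\setminus\Omega}\Gamma(y,z)f_1\,dz$ is of the same order, since $\mathrm{dist}(x,\R^{N}\setminus\Omega)\geq cL$ for some $c>0$ and $f_1$ has the stated tail decay.

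Finally, \eqref{eqs2.2} and \eqref{eqs2.3} follow by differentiating the integral representations under the integral sign. Since $f_l(\cdot;x,\mu)$ depends on $x$ only through $z-x$, the identity $\partial_{x_h}f_l=-\partial_{z_h}f_l$ together with integration by parts in $z_h$ transfers the derivative onto the kernel; the boundary contribution on $\partial\Omega$ is controlled by the smallness of $f_l$ at distance $\geq cL$ from $x$. The $\mu$-derivative is handled analogously via the scaling identity $\partial_\mu f_l=\mu^{-1}\bigl[\tfrac{N+2}{2}f_l+(z-x)\cdot\nabla_z f_l\bigr]$, together with direct $\mu$-differentiation of the prefactor $\mu^{-(N-2)/2}$. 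Re-running the splitting argument with $\partial_{x_h}\Gamma$, respectively $\partial_\mu$-derivatives, in place of $\Gamma$ produces the stated leading terms and the remainders (with an extra $\mu^{-1}$ in \eqref{eqs2.3}). The hard part throughout is the uniform control of the second-order Taylor remainder for slowly decaying bubbles, which is what forces both the cutoff splitting and the dimension restriction $k<(N-2)/2$.
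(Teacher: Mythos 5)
Your proposal is correct and follows essentially the same route as the paper: represent $PU_{x,\mu}$ via the periodic Green's function, isolate the $j=0$ piece (recovering $U_{x,\mu}$ up to a boundary error over $\R^N\setminus\Omega$), for $j\geq 1$ localize to a fixed ball around $x$, rescale, and Taylor-expand the kernel at $z=x$ (the symmetry cancellation and logarithmic divergence you flag are exactly what produce the paper's intermediate error $O\bigl(\tfrac{\ln\mu}{|LP_j|^{N}\mu^{(N+2)/2}}\bigr)$), and then differentiate under the integral sign, using $\partial_{x_h}=-\partial_{z_h}$ and integration by parts to transfer derivatives onto the kernel. One small discrepancy: since $\int_{\R^N}f_l = C_\kappa B_l\,\mu^{-(N-2)/2}$ with $C_\kappa = 1+\tfrac{1}{2}\kappa^{2^*/2}$, your assertion ``$\int_{\R^N} f_l\,dz = B_l\mu^{-(N-2)/2}$'' implicitly drops the same constant factor that the paper normalizes away by writing ``we may assume $C_\kappa=1$'' in its proof.
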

\begin{proof}
We have
\begin{equation}\label{eqs2.4}
PU_{x,\mu}(y)=C_\kappa \sum_{j=0}^{\infty}\int_{\Omega}\Gamma(z+LP_{j},y)U_{x,\mu}^{2^{*}-1}dz.
\end{equation}
We may assume $C_\kappa =1$. For $j=0$, it holds
\begin{align*}
&\int_{\Omega}\Gamma(z+LP_{0},y)U_{x,\mu}^{2^{*}-1}=\int_{\R^N}\Gamma(z,y)U_{x,\mu}^{2^*-1}-\int_{\R^N \backslash \Omega}\Gamma(z,y)U_{x,\mu}^{2^*-1}\cr
&=U_{x,\mu}(y)+O\Bigl(\int_{\R^N \backslash \Omega}\frac{1}{|y-z|^{N-2}}\frac{1}{|z|^{N+2}\mu^{\frac{N+2}{2}}}dz\Bigr)\cr
&=U_{x,\mu}(y)+O\Bigl(\int_{\R^N \backslash \Omega}\frac{1}{|z|^{2N-2}}\frac{1}{\mu^{\frac{N+2}{2}}}dz\Bigr)
=U_{x,\mu}(y)+O\Bigl(\frac{1}{L^{N-2}\mu^{\frac{N+2}{2}}}\Bigr).
\end{align*}
For $j\neq0,$ we have
\begin{align*}
&\int_{\Omega}\Gamma(z+LP_{j},y)U_{x,\mu}^{2^*-1}\\
&=\int_{B_{\delta}(x)}\Gamma(z+LP_{j},y)U_{x,\mu}^{2^*-1}+O\Bigl(\int_{\Omega \backslash B_{\delta}(x)}\Gamma(y,z+LP_{j})\frac{1}{|z-x|^{N+2}}\frac{1}{\mu^{\frac{N+2}{2}}}\Bigr)\\
&=\frac{1}{\mu^{\frac{N-2}{2}}}{\int}_{B_{\delta\mu}(0)}\Gamma(\mu^{-1}z+x+LP_{j},y)U_{0,1}^{2^*-1}+O\Bigl(\frac{1}{|LP_{j}|^{N-2}}\frac{1}{\mu^{\frac{N+2}{2}}}\Bigr)\\
&=\frac{B_1\Gamma(x+LP_{j},y)}{\mu^{\frac{N-2}{2}}}+O\Bigl(\frac{1}{|LP_{j}|^{N}}\frac{\ln\mu}{\mu^{\frac{N+2}{2}}}+\frac{1}{|LP_{j}|^{N-2}}\frac{1}{\mu^{\frac{N+2}{2}}}\Bigr),
\end{align*}
where $B_1=\ds\int_{\mathbb{R}^{N}}U_{0,1}^{2^{*-1}}$. So we have proved
$$\varphi_1(y)=-B_1\sum_{j=1}^{\infty}\frac{\Gamma(x+L P_{j},y)}{\mu^{\frac{N-2}{2}}}+O\Bigl(\frac{1}{L^{N-2}\mu^{\frac{N+2}{2}}}\Bigr).$$

To prove \eqref{eqs2.2}, we have
\begin{equation}\label{eqs2.5}
\frac{\partial PU_{x,\mu}(y)}{\partial x_{h}}=(2^{*}-1)\sum_{j=0}^{\infty}\int_{\Omega}\Gamma(z+LP_{j},y)U_{x,\mu}^{2^{*}-2}\frac{\partial U_{x,\mu}}{\partial x_{h}}.
\end{equation}

For $j=0,$ it holds
\begin{align*}
&(2^{*}-1)\int_{\Omega}\Gamma(z+LP_{0},y)U_{x,\mu}^{2^{*}-2}\frac{\partial U_{x,\mu}}{\partial x_{h}}\\
&=(2^{*}-1)\int_{\R^{N}}\Gamma(z,y)U_{x,\mu}^{2^{*}-2}\frac{\partial U_{x,\mu}}{\partial x_{h}}-(2^{*}-1)\int_{\R^N \backslash \Omega}\Gamma(z,y)U_{x,\mu}^{2^{*}-2}\frac{\partial U_{x,\mu}}{\partial x_{h}}\\
&=\frac{\partial U_{x,\mu}}{\partial x_{h}}+O\Bigl(\frac{1}{L^{N-1}\mu^{\frac{N+2}{2}}}\Bigr),
\end{align*}
while for $j\neq0$, we have
\begin{align*}
&(2^{*}-1)\int_{\Omega}\Gamma(z+LP_{j},y)U_{x,\mu}^{2^{*}-2}{\frac{\partial U_{x,\mu}}{\partial x_{h}}}\cr
=&-(2^{*}-1)\int_{B_{\delta}(x)}\Gamma(z+LP_{j},y)U_{x,\mu}^{2^{*}-2}{\frac{\partial U_{x,\mu}}{\partial z_{h}}}
+O\Bigl(\int_{\Omega\backslash B_{\delta}(x)}\Gamma(z+LP_{j},y)\frac{1}{|z-x|^{N+3}}\frac{1}{\mu^{\frac{N+2}{2}}}\Bigr)\cr
=&\int_{B_{\delta}(x)}{\frac{\partial\Gamma(z+L P_{j},y)}{\partial z_h}}U_{x,\mu}^{2^{*}-1}+O\Bigl({\frac{1}{|LP_{j}|^{N-2}}}{\frac{1}{\mu^{\frac{N+2}{2}}}}\Bigr)\cr
=&B_1\frac{\partial\Gamma(x+L P_{j},y)}{\partial x_{h}}\frac{1}{\mu^{\frac{N-2}{2}}}+O\Bigl(\frac{1}{|LP_{j}|^{N}}\frac{\ln\mu}{\mu^{\frac{N+2}{2}}}+\frac{1}{|LP_{j}|^{N-2}}\frac{1}{\mu^{\frac{N+2}{2}}}\Bigr).
\end{align*}
So we have proved
$$\frac{\partial\varphi_1}{\partial x_{h}}(y)=-B_1\frac{1}{\mu^{\frac{N-2}{2}}}\sum_{j=1}^{\infty}\frac{\partial\Gamma(x+L P_{j},y)}
{\partial x_{h}}+O\Bigl(\frac{1}{L^{N-2}\mu^{\frac{N+2}{2}}}\Bigr).$$

Using
\begin{align*}
\frac{\partial PU_{x,\mu}(y)}{\partial\mu}=(2^{*}-1)\sum_{j=0}^{\infty}\int_{\Omega}\Gamma(z+LP_{j},y)U_{x,\mu}^{2^{*}-2}\frac{\partial U_{x,\mu}}{\partial\mu},
\end{align*}
in a similar way, we can prove
$$\frac{\partial\varphi_1}{\partial\mu}(y)=B_1\sum_{j=1}^{\infty}\frac{(N-2)\Gamma(x+LP_{j},y)}{2\mu^{\frac{N}{2}}}
+\frac{1}{\mu}O\Bigl(\frac{1}{L^{N-2}\mu^{\frac{N+2}{2}}}\Bigr).$$
Similarly, we can estimate $\varphi_2$.
\end{proof}

Using Lemma 2.2, we are ready to estimate the following quantities, which will be used to determine $x_{L}$ and $\mu_{L}$ for the bubbling solution of \eqref{eqs1.1}:
\begin{equation}\label{eqs2.6}
-\int_{\Omega}\Delta(PU_{x,\mu})\partial_{h}(PU_{x,\mu})-\int_{\Omega}K_1(y)(PU_{x,\mu})^{2^{*}-1}\partial_{h}(PU_{x,\mu})-\frac{1}{2}\int_{\Omega} (PU_{x,\mu})^{\frac{2^*}{2}-1}(PV_{x,\mu})^\frac{2^*}{2}\partial_{h}(PU_{x,\mu})
\end{equation}
and
\begin{equation}\label{eqs2.6(1)}
-\int_{\Omega}\Delta(PV_{x,\mu})\partial_{h}(PV_{x,\mu})-\int_{\Omega}K_2(y)(PV_{x,\mu})^{2^{*}-1}\partial_{h}(PV_{x,\mu})-\frac{1}{2}\int_{\Omega} (PV_{x,\mu})^{\frac{2^*}{2}-1}(PU_{x,\mu})^\frac{2^*}{2}\partial_{h}(PV_{x,\mu}),
\end{equation}
 where
$$
\partial_{h}(PU_{x,\mu})=\frac{\partial PU_{x,\mu}}{\partial x_h} \quad \hbox{if} \quad h=1,\ldots,N, \quad \hbox{and} \quad
\partial_{N+1}(PU_{x,\mu})=\frac{\partial PU_{x,\mu}}{\partial \mu}
$$
and
$$
\partial_{h}(PV_{x,\mu})=\frac{\partial PV_{x,\mu}}{\partial x_h}\quad \hbox{if} \quad h=1,\ldots,N, \quad \hbox{and} \quad
\partial_{N+1}(PV_{x,\mu})=\frac{\partial PV_{x,\mu}}{\partial \mu}.
$$
We will use the following notations:
\begin{equation}\label{eqs2.7}
\alpha(h)=1,\;\;\;h=1,\ldots,N,\,\,\,\alpha(N+1)=-1.
\end{equation}
First, we have
\begin{align}\begin{split}\label{eqs2.8}
&-\int_{\Omega}\Bigl(\Delta(PU_{x,\mu})\partial_{h}(PU_{x,\mu})-K_1(y)(PU_{x,\mu})^{2^{*}-1}\partial_{h}(PU_{x,\mu})\Bigr)\\
&\quad -\frac{1}{2}\int_{\Omega} (PU_{x,\mu})^{\frac{2^*}{2}-1}(PV_{x,\mu})^\frac{2^*}{2}\partial_{h}(PU_{x,\mu})\cr
=&\int_{\Omega}(U_{x,\mu}^{2^{*}-1}+\frac{1}{2}(U_{x,\mu})^{\frac{2^*}{2}-1}(V_{x,\mu})^\frac{2^*}{2})\partial_{h}(P U_{x,\mu})-\int_{\Omega}K_1(y)(PU_{x,\mu})^{2^{*}-1}\partial_{h}(PU_{x,\mu})\cr
&\,\,\,\,\,-\frac{1}{2}\int_{\Omega} (PU_{x,\mu})^{\frac{2^*}{2}-1}(PV_{x,\mu})^\frac{2^*}{2}\partial_{h}(PU_{x,\mu})\cr
=&\int_{\Omega}\Bigl(U_{x,\mu}^{2^{*}-1}\partial_{h}(PU_{x,\mu})-K_1(y)(PU_{x,\mu})^{2^{*}-1}\partial_{h}(PU_{x,\mu})\Bigr)
-\frac{1}{2}\int_{\Omega} \Bigl((PU_{x,\mu})^{\frac{2^*}{2}-1}(PV_{x,\mu})^\frac{2^*}{2}\cr
&\quad -U_{x,\mu}^{\frac{2^*}{2}-1}V_{x,\mu}^\frac{2^*}{2}\Bigr)\partial_{h}(PU_{x,\mu}).
\end{split}
\end{align}
Note that $|y-x_j|\geq |y-x|$ if $y\in\Omega$ and $x$ is close to 0. Thus, for $y\in\Omega\backslash B_{1}(x),$ it holds
\begin{align*} \sum_{j=1}^{\infty}U_{x_{j},\mu}&\le\frac{C\mu^{\frac{N-2}{2}}}{(1+\mu|y-x|)^{\frac{N-2}{2}+\theta}}\sum_{j=1}^{\infty}\frac{1}{(1+\mu|y-x_{j}|)^{\frac{N-2}{2}-\theta}}\\
&\le\frac{C\mu^{\frac{N-2}{2}}}{(1+\mu|y-x|)^{\frac{N-2}{2}+\theta}}\frac{1}{(\mu L)^{\frac{N-2}{2}-\theta}}.
\end{align*}

Moreover, using Lemma 2.1, we see
\begin{align}\begin{split}\label{eqs2.9}
\Bigl|\partial_{h}(PU_{x,\mu})\Bigr|=&\Bigl|\int_{\Omega}G(z,y)\partial_{h}(U_{x,\mu}^{2^{*}-1}+\frac{1}{2}U_{x,\mu}^{\frac{2^*}{2}-1}V_{x,\mu}^\frac{2^*}{2})\Bigr|\\
\leq& C\mu^{\alpha(h)}\int_{\Omega}G(z,y)(U_{x,\mu}^{2^{*}-1}+\frac{1}{2}U_{x,\mu}^{\frac{2^*}{2}-1}V_{x,\mu}^\frac{2^*}{2}) \cr
=&C\mu^{\alpha(h)}PU_{x,\mu}\leq C\mu^{\alpha(h)}\sum_{j=0}^{\infty}U_{x_j,\mu}.
\end{split}
\end{align}
So we have
\begin{align}\begin{split}\label{eqs2.10}
&\Bigl|\int_{\Omega \backslash B_{1}(x)} U_{x, \mu}^{2^{*}-1} \partial_{h}(PU_{x, \mu})-\int_{\Omega \backslash B_{1}(x)} K_1(y)(PU_{x, \mu})^{2^{*}-1} \partial_{h}(PU_{x, \mu})\\
&\quad \quad -\frac{1}{2}\int_{\Omega \backslash B_{1}(x)} ((PU_{x,\mu})^{\frac{2^*}{2}-1}(PV_{x,\mu})^\frac{2^*}{2}-U_{x,\mu}^{\frac{2^*}{2}-1}V_{x,\mu}^\frac{2^*}{2})\partial_{h}(PU_{x,\mu})\Bigr| \\
&\quad \leq C \mu^{\alpha(h)} \int_{\Omega \backslash B_{1}(x)}\Bigr(\sum_{j=0}^{\infty} U_{x_{j}, \mu}\Bigr)^{2^{*}} \\
&\quad \leq C \mu^{\alpha(h)+N} \int_{\Omega \backslash B_{1}(x)}\Bigl[\frac{1}{(1+\mu|y-x|)^{N-2}}+\frac{1}{(1+\mu|y-x|)^{\frac{N-2}{2}+\theta}} \frac{1}{(\mu L)^{\frac{N-2}{2}-\theta}}\Bigr]^{2^{*}} \\
&\quad \leq C \mu^{\alpha(h)-N}.
\end{split}
\end{align}
We write
\begin{align}\begin{split}\label{eqs2.11}
&\int_{B_{1}(x)} U_{x, \mu}^{2^{*}-1} \partial_{h}(PU_{x, \mu})-\int_{B_{1}(x)} K_1(y)(PU_{x, \mu})^{2^{*}-1} \partial_{h}(PU_{x, \mu})\cr
&\quad -\frac{1}{2}\int_{B_{1}(x)} \Bigl[(PU_{x,\mu})^{\frac{2^*}{2}-1}(PV_{x,\mu})^\frac{2^*}{2}-U_{x,\mu}^{\frac{2^*}{2}-1}V_{x,\mu}^\frac{2^*}{2}\Bigr]\partial_{h}(PU_{x,\mu}) \cr
=&\frac{2+\kappa}{2}\int_{B_{1}(x)}\Bigl(U_{x, \mu}^{2^{*}-1}-(PU_{x, \mu})^{2^{*}-1}\Bigr) \partial_{h}(PU_{x, \mu})-\int_{B_{1}(x)}(K_1(y)-1)(PU_{x, \mu})^{2^{*}-1} \partial_{h}(PU_{x, \mu}) \cr
:=&\frac{2+\kappa}{2}J_{1}-J_{2}.
\end{split}
\end{align}
By Lemma 2.2, we have
\begin{equation}\label{eqs2.12}
\int_{B_{1}(x)} U_{x, \mu}^{2^{*}-2}|\varphi_{x, \mu}|^{2} \leq \frac{C}{\mu^{N-2} L^{2(N-2)}} \int_{B_{1}(x)} U_{x, \mu}^{2^{*}-2} \leq \frac{C}{\mu^{N}}
\end{equation}
and
\begin{equation}\label{eqs2.13}
\int_{B_{1}(x)}|K_1(y)-1| U_{x, \mu}^{2^{*}-1}|\varphi_{x, \mu}| \leq \frac{C}{\mu^{\frac{N-2}{2}} L^{N-2}} \int_{B_{1}(x)}|y|^{\beta_1} U_{x, \mu}^{2^{*}-1} \leq \frac{C}{\mu^{N}}+\frac{C|x|^{\beta_1}}{(\mu L)^{N-2}} .
\end{equation}
Thus, we get
\begin{align}\begin{split}\label{eqs2.14}
J_{1} & =\int_{B_{1}(x)}\Bigl(U_{x, \mu}^{2^{*}-1}-(PU_{x, \mu})^{2^{*}-1}\Bigr) \partial_{h} U_{x, \mu}-\int_{B_{1}(x)}\Bigl(U_{x, \mu}^{2^{*}-1}-(PU_{x, \mu})^{2^{*}-1}\Bigr) \partial_{h} \varphi_{1} \\
& =(2^{*}-1) \int_{B_{1}(x)} U_{x, \mu}^{2^{*}-2} \varphi_{1} \partial_{h} U_{x, \mu}+O\Bigl(\mu^{\alpha(h)} \int_{B_{1}(x)} U_{x, \mu}^{2^{*}-2}|\varphi_{x, \mu}|^{2}\Bigr) \\
& =(2^{*}-1) \int_{B_{1}(x)} U_{x, \mu}^{2^{*}-2} \varphi_{1} \partial_{h} U_{x, \mu}+\mu^{\alpha(h)} O\Bigl(\frac{1}{\mu^{N}}\Bigr)
\end{split}
\end{align}
and
\begin{align}\begin{split}\label{eqs2.15}
J_{2} & =\int_{B_{1}(x)}(K_1(y)-1) U_{x, \mu}^{2^{*}-1} \partial_{h} U_{x, \mu}-\int_{B_{1}(x)}(K_1(y)-1) U_{x, \mu}^{2^{*}-1} \partial_{h} \varphi_{1} \\
& =\int_{B_{1}(x)}(K_1(y)-1) U_{x, \mu}^{2^{*}-1} \partial_{h} U_{x, \mu}+\mu^{\alpha(h)} O\Bigl(\frac{1}{\mu^{N}}+\frac{|x|^{\beta_1}}{\mu^{\beta_1}}\Bigr).
\end{split}
\end{align}

So, combining \eqref{eqs2.8}, \eqref{eqs2.10}, \eqref{eqs2.11}, \eqref{eqs2.14} and \eqref{eqs2.15}, we obtain
\begin{align}\begin{split}\label{eqs2.17}
&-\int_{\Omega}\Delta(PU_{x,\mu})\partial_{h}(PU_{x,\mu})-\int_{\Omega}K_1(y)(PU_{x,\mu})^{2^{*}-1}\partial_{h}(PU_{x,\mu})\\
&\quad -\frac{1}{2}\int_{\Omega} (PU_{x,\mu})^{\frac{2^*}{2}-1}(PV_{x,\mu})^\frac{2^*}{2}\partial_{h}(PU_{x,\mu}) \\
=&\frac{(2+\kappa)(2^{*}-1)}{2} \int_{B_{1}(x)} U_{x, \mu}^{2^{*}-2} \varphi_{1} \partial_{h} U_{x, \mu}-\int_{B_{1}(x)}(K_1(y)-1) U_{x, \mu}^{2^{*}-1} \partial_{h} U_{x, \mu} +\mu^{\alpha(h)}O\Bigl(\frac{1}{\mu^{N}}\Bigr),
\end{split}
\end{align}
since  $|x|^{\beta} \mu^{-\beta} \leq \mu^{-2 \beta}=o(\mu^{-N})$.

\begin{proposition}\label{pro2.3}
For  h=1, \ldots, N , we have
\begin{align}\label{eqs2.18}
\eqref{eqs2.6}=\frac{B_{1h}\mu x_{h}}{\mu^{\beta_{1h}-1}}+O\Bigl(\frac{1}{\mu^{\beta_{1M}-1+\sigma}}+\frac{\left(\mu x_{h}\right)^{2}}{\mu^{\beta_{1h}-1}}+\frac{1}{\mu^{\beta_1} L}\Bigr)
\end{align}
and
\begin{align}\label{eqs2.19}
\eqref{eqs2.6(1)}=\frac{B_{2h}\mu x_{h}}{\mu^{\beta_{2h}-1}}+O\Bigl(\frac{1}{\mu^{\beta_{2M}-1+\sigma}}+\frac{\left(\mu x_{h}\right)^{2}}{\mu^{\beta_{2h}-1}}+\frac{1}{\mu^{\beta_2} L}\Bigr),
\end{align}
where  $B_{1h}$ and $B_{2h}$  are non-zero constants,  $h=1, \ldots, N$.
\end{proposition}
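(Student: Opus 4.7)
\medskip

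\noindent\textbf{Proof proposal.} The statement \eqref{eqs2.18} follows from the identity \eqref{eqs2.17}, which already reduces the quantity \eqref{eqs2.6} to
\[
\frac{(2+\kappa)(2^{*}-1)}{2}\int_{B_{1}(x)} U_{x,\mu}^{2^{*}-2}\varphi_{1}\,\partial_{h}U_{x,\mu}\;-\;\int_{B_{1}(x)}(K_{1}(y)-1)\,U_{x,\mu}^{2^{*}-1}\,\partial_{h}U_{x,\mu}\;+\;\mu^{\alpha(h)}O\!\left(\mu^{-N}\right),
\]
so the plan is to estimate each of these two integrals with $h=1,\ldots,N$. The statement \eqref{eqs2.19} for $PV_{x,\mu}$ is entirely analogous with $K_2$ replacing $K_1$ and $B_2$ replacing $B_1$, so I focus on $PU$.

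\medskip

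\noindent\textbf{Main term from $(K_{1}-1)$.} For $y \in B_{\delta}(0)$, use the expansion in $(A_{2})$:
\[
K_{1}(y)-1=\sum_{i=1}^{N}a_{1i}|y_{i}|^{\beta_{1i}}+O(|y|^{\beta_{1M}+\sigma_{1}}).
\]
After the change of variables $z=\mu(y-x)$, the integrand becomes a rescaled bubble, and $\partial_{h}U_{x,\mu}$ is odd in $y_{h}-x_{h}$. Hence the even-in-$z_{h}$ part of $|y_{h}|^{\beta_{1h}}=|x_{h}+z_{h}/\mu|^{\beta_{1h}}$ contributes $0$, the terms with $i\neq h$ give $0$ by symmetry in $y_i-x_i$, and the leading surviving piece comes from the first-order Taylor expansion in $x_{h}$ of $|x_{h}+z_{h}/\mu|^{\beta_{1h}}$. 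Tracking the scaling in $\mu$ this yields a term of the form $B_{1h}\mu x_h/\mu^{\beta_{1h}-1}$, where
\[
B_{1h}=c_{N}\,a_{1h}\,\beta_{1h}\int_{\mathbb{R}^{N}}\frac{|z_{h}|^{\beta_{1h}}\,z_h^{2}}{(1+|z|^{2})^{N/2}}\,U_{0,1}^{2^{*}-1}(z)\,dz\neq 0.
\]
The quadratic remainder contributes $O((\mu x_h)^{2}/\mu^{\beta_{1h}-1})$, while the error $O(|y|^{\beta_{1M}+\sigma_{1}})$ produces $O(\mu^{-(\beta_{1M}-1+\sigma)})$.

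\medskip

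\noindent\textbf{Error term from $\varphi_{1}$.} Here I insert the expansion of $\varphi_{1}$ from Lemma~\ref{lm2.2}. Writing $\varphi_{1}(y)=\varphi_{1}(x)+\nabla_{y}\varphi_{1}(x)\cdot(y-x)+O(|y-x|^{2}\,\|\nabla_{y}^{2}\varphi_{1}\|_{L^{\infty}(B_{1}(x))})$, the first (constant) term cancels by the symmetry $\int U_{x,\mu}^{2^{*}-2}\partial_{h}U_{x,\mu}=0$. For the linear term I use
\[
|\nabla_{y}\varphi_{1}(x)|\leq \frac{C}{\mu^{(N-2)/2}}\sum_{j=1}^{\infty}\frac{1}{|LP_{j}|^{N-1}}+O\!\left(\frac{1}{L^{N-2}\mu^{(N+2)/2}}\right)=O\!\left(\frac{1}{\mu^{(N-2)/2}L^{N-1}}\right),
\]
together with $\int_{\mathbb{R}^{N}}(y_{h}-x_{h})U_{x,\mu}^{2^{*}-2}\partial_{h}U_{x,\mu}=O(\mu^{1-N/2})$, so this integral is $O(\mu^{2-N}/L^{N-1})$. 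Substituting the relation $\mu L^{-(N-2)/(\beta-N+2)}\sim C_{0}$ in force throughout the paper, one checks this is absorbed into $O(1/(\mu^{\beta_{1}}L))$. The quadratic-remainder piece is similarly smaller than the other error terms.

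\medskip

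\noindent\textbf{Putting things together.} Combining the two estimates yields \eqref{eqs2.18}; the same argument with $V,K_{2},\beta_{2},B_{2}$ gives \eqref{eqs2.19}. The main technical obstacle is controlling the $\varphi_{1}$-integral, where one cannot use symmetry naively and must extract cancellation through a Taylor expansion of the Green-function series; here the hypothesis $k<(N-2)/2$ and the decay rate $|\nabla\Gamma(x+LP_{j},\cdot)|\lesssim |LP_{j}|^{1-N}$ are essential to ensure convergence of the lattice sum and to place the resulting bound strictly below the $B_{1h}\mu x_h/\mu^{\beta_{1h}-1}$ main term.
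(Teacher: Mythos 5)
Your overall plan coincides with the paper's: starting from the identity \eqref{eqs2.17}, you estimate separately the integral involving $\varphi_1$ and the integral involving $K_1(y)-1$, and your treatment of the latter (change of variables, odd symmetry in the variables $y_i$, $i\neq h$, Taylor expansion of $|x_h+z_h/\mu|^{\beta_{1h}}$ in $\mu x_h$) is exactly the argument in \eqref{eqs2.21}. There is a small slip in your formula for $B_{1h}$: the integrand should be $a_{1h}\beta_{1h}\,|z_h|^{\beta_{1h}-2}z_h\,U_{0,1}^{2^*-1}\,\partial_{z_h}U_{0,1}$, which reduces to a constant times $|z_h|^{\beta_{1h}}(1+|z|^2)^{-N/2}U_{0,1}^{2^*-1}$ (no extra $z_h^2$), but this does not affect non-vanishing of $B_{1h}$.

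For the $\varphi_1$-integral you take a genuinely different route. The paper substitutes the expansion $\varphi_1(y)=-B_1\mu^{-(N-2)/2}\sum_{j\ge1}\Gamma(x+LP_j,y)+O(L^{2-N}\mu^{-(N+2)/2})$ from Lemma~\ref{lm2.2} directly into the integral, uses $(2^*-1)U_{x,\mu}^{2^*-2}\partial_h U_{x,\mu}=-\partial_{y_h}(U_{x,\mu}^{2^*-1})$, and integrates by parts so that $\partial_{y_h}$ falls on the explicit kernel $\Gamma(x+LP_j,\cdot)$; the result is then read off as $O(\mu^{2-N}L^{1-N})=O(\mu^{-\beta_1}L^{-1})$ (see \eqref{eqs2.20}). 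You instead Taylor-expand $\varphi_1$ around $y=x$ and use odd symmetry to kill the zeroth order, then pair $\nabla_y\varphi_1(x)$ with the first moment of $U_{x,\mu}^{2^*-2}\partial_h U_{x,\mu}$. The orders of magnitude come out the same, and the arithmetic check $\mu^{2-N}L^{1-N}\sim \mu^{-\beta}L^{-1}$ under $\mu L^{-(N-2)/(\beta-N+2)}\in[C_0/2,2C_0]$ is correct. However, your route requires pointwise bounds on $\nabla_y\varphi_1$ and $\nabla_y^2\varphi_1$ on $B_1(x)$, and Lemma~\ref{lm2.2} as stated only gives expansions of $\partial_{x_h}\varphi_1$ and $\partial_\mu\varphi_1$: while $\partial_{y_h}\Gamma(x+LP_j,y)=-\partial_{x_h}\Gamma(x+LP_j,y)$, the $O$-remainder in Lemma~\ref{lm2.2} is not a priori differentiable in $y$ with the same bound, and $\partial_{y_h}PU_{x,\mu}\neq-\partial_{x_h}PU_{x,\mu}$. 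These bounds can be obtained by rerunning the argument of Lemma~\ref{lm2.2} for $y$-derivatives (the Green's function is explicit), but you would need to state and prove them; the paper's integration-by-parts trick sidesteps this entirely by keeping $\Gamma$ in closed form and never differentiating the remainder. Your approach is workable, just slightly heavier on prerequisite estimates than the one actually used.
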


\begin{proof}
For $h=1, \ldots, N$, we have
\begin{align}\begin{split}\label{eqs2.20}
&(2^{*}-1) \int_{B_{1}(x)} U_{x, \mu}^{2^{*}-2} \varphi_{1} \frac{\partial U_{x, \mu}}{\partial x_{h}}dy\\
&\quad =-\frac{(2^{*}-1) B_1}{\mu^{\frac{N-2}{2}}} \sum_{j=1}^{\infty} \int_{B_{1}(x)} U_{x, \mu}^{2^{*}-2} \frac{\partial U_{x, \mu}}{\partial x_{h}} \Gamma(x+L P_{j}, y)+O\Bigl(\frac{1}{\mu^{N-1} L^{N-2}}\Bigr) \\
&\quad =\frac{B_1}{\mu^{\frac{N-2}{2}}} \sum_{j=1}^{\infty} \int_{B_{1}(x)} \frac{\partial U_{x, \mu}^{2^{*}-1}}{\partial y_{h}} \Gamma(x+LP_{j}, y)+O\Bigl(\frac{1}{\mu^{\beta_1+1}}\Bigr) \\
&\quad =-\frac{B_1}{\mu^{\frac{N-2}{2}}} \sum_{j=1}^{\infty} \int_{B_{1}(x)} U_{x, \mu}^{2^{*}-1} \frac{\partial \Gamma(x+LP_{j}, y)}{\partial y_{h}}+O\Bigl(\frac{1}{\mu^{\beta_1+1}}\Bigr) \\
&\quad =-\left.\frac{B_1^{2}}{\mu^{N-2} L^{N-1}} \sum_{j=1}^{\infty} \frac{\partial \Gamma(P_{j}, y)}{\partial y_{h}}\right|_{y=0}+O\Bigl(\frac{1}{\mu^{\beta_1+1}}\Bigr)=O\Bigl(\frac{1}{\mu^{\beta_1} L}\Bigr).
\end{split}
\end{align}
On the other hand, we have
\begin{align}\begin{split}\label{eqs2.21}
&\int_{B_{1}(x)}(K_1(y)-1) U_{x, \mu}^{2^{*}-1} \frac{\partial U_{x, \mu}}{\partial x_{h}}\\
&\quad =\int_{B_{\delta}(x)} \sum_{i=1}^{N} a_{i}|y_{i}|^{\beta_{1i}} U_{x, \mu}^{2^{*}-1} \frac{\partial U_{x, \mu}}{\partial x_{h}}+O\Bigl(\frac{1}{\mu^{\beta_{1M}-1+\sigma}}\Bigr) \\
&\quad =-\int_{B_{\delta \mu}(0)} \sum_{i=1}^{N} a_{i}|y_{i}+\mu x_{i}|^{\beta_{1i}} \frac{1}{\mu^{\beta_{1i}-1}} U_{0,1}^{2^{*}-1} \frac{\partial U_{0,1}}{\partial y_{h}}+O\Bigl(\frac{1}{\mu^{\beta_{1M}-1+\sigma}}\Bigr) \\
&\quad =-\int_{B_{\delta \mu}(0)} a_{h}|y_{h}+\mu x_{h}|^{\beta_{1h}} \frac{1}{\mu^{\beta_{1h}-1}} U_{0,1}^{2^{*}-1} \frac{\partial U_{0,1}}{\partial y_{h}}+O\Bigl(\frac{1}{\mu^{\beta_{1M}-1+\sigma}}\Bigr) \\
&\quad =-\frac{a_{h}\mu x_{h}}{\mu^{\beta_{1h}-1}} \int_{B_{\delta \mu}(0)}\beta_{1h}|y_{h}|^{\beta_{1h}-2} y_{h} U_{0,1}^{2^{*}-1} \frac{\partial U_{0,1}}{\partial y_{h}}+O\Bigl(\frac{1}{\mu^{\beta_{1M}-1+\sigma}}+\frac{(\mu x_{h})^{2}}{\mu^{\beta_{1h}-1}}\Bigr).
\end{split}
\end{align}
Combining \eqref{eqs2.17}, \eqref{eqs2.20} and \eqref{eqs2.21}, we obtain the desired result in Proposition 2.3.
\end{proof}

Throughout this paper, we will use the following notations
$$\psi_{0}=\left.\frac{\partial U_{0, \lambda}}{\partial \mu}\right|_{\lambda=1}, \quad \psi_{j}=\frac{\partial U_{0,1}}{\partial y_{j}},\quad\,\,\,\xi _{0}=\left.\frac{\partial V_{0, \lambda}}{\partial \mu}\right|_{\mu=1}, \quad \xi _{j}=\frac{\partial V_{0,1}}{\partial y_{j}},  \quad j=1, \ldots, N.$$
\begin{proposition}\label{pro2.4}
It holds
\begin{align}
- & \int_{\Omega} \Delta(PU_{x, \mu}) \frac{\partial PU_{x, \mu}}{\partial \mu}-\int_{\Omega} K_1(y)(PU_{x, \mu})^{2^{*}-1} \frac{\partial PU_{x, \mu}}{\partial \mu} -\frac{1}{2}\int_{\Omega} (PU_{x,\mu})^{\frac{2^*}{2}-1}(PV_{x,\mu})^\frac{2^*}{2}\partial_{\mu}(PU_{x,\mu})\cr
= & -\frac{1}{\mu^{\beta_1+1}} \sum_{i \in J} a_{1i} \ds\int_{\mathbb{R}^{N}}|y_i|^{\beta_1} U_{0,1}^{2^{*}-1} \psi_{0}-\frac{(2+\kappa)(2^{*}-1) B_1 \ds\int_{\mathbb{R}^{N}} U_{0,1}^{2^{*}-2} \psi_{0}}{2\mu^{N-1} L^{N-2}} \sum_{j=1}^{\infty} \Gamma(P_{j}, 0)
+o\Bigl(\frac{1}{\mu^{\beta_1+1}}\Bigr)
\end{align}
and
\begin{align}
- & \int_{\Omega} \Delta(PV_{x, \mu}) \frac{\partial PV_{x, \mu}}{\partial \mu}-\int_{\Omega} K_2(y)(PV_{x, \mu})^{2^{*}-1} \frac{\partial PV_{x, \mu}}{\partial \mu} -\frac{1}{2}\int_{\Omega} (PV_{x,\mu})^{\frac{2^*}{2}-1}(PU_{x,\mu})^\frac{2^*}{2}\partial_{\mu}(PV_{x,\mu})\cr
= & -\frac{1}{\mu^{\beta_2+1}} \sum_{i \in J} a_{2i} \int_{\mathbb{R}^{N}}|y_i|^{\beta_2} V_{0,1}^{2^{*}-1} \xi_{0}-\frac{(2+\kappa)(2^{*}-1) B_2 \ds\int_{\mathbb{R}^{N}} V_{0,1}^{2^{*}-2} \xi_{0}}{2\mu^{N-1} L^{N-2}} \sum_{j=1}^{\infty} \Gamma(P_{j}, 0)+o\Bigl(\frac{1}{\mu^{\beta_2+1}}\Bigr),
\end{align}
where $J_l=\left\{j: \beta_{lj}=\beta_l\right\}, l=1,2.$
\end{proposition}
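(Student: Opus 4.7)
The plan is to mimic the derivation of \eqref{eqs2.17} but for the scaling derivative $\partial_{N+1}=\partial_\mu$. First I would re-run the computation \eqref{eqs2.8}--\eqref{eqs2.17} with $\partial_h$ replaced by $\partial_\mu$. Since $|\partial_\mu U_{x,\mu}|\le C\mu^{-1}U_{x,\mu}$, Lemma \ref{lm2.1} still gives $|\partial_\mu(PU_{x,\mu})|\le C\mu^{-1}\sum_j U_{x_j,\mu}$, so the analogue of \eqref{eqs2.10} contributes only $O(\mu^{-1-N})$. The bounds \eqref{eqs2.12}--\eqref{eqs2.13} do not involve $\partial_h$, so the analogues of $J_1,J_2$ with $\partial_\mu$ in place of $\partial_h$ go through and I arrive at
\begin{align*}
\eqref{eqs2.6}|_{\partial_\mu}=\frac{(2+\kappa)(2^{*}-1)}{2}\int_{B_1(x)}U_{x,\mu}^{2^*-2}\varphi_1\,\partial_\mu U_{x,\mu}
-\int_{B_1(x)}(K_1(y)-1)U_{x,\mu}^{2^*-1}\,\partial_\mu U_{x,\mu}+O\bigl(\mu^{-N-1}\bigr).
\end{align*}

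Next I would evaluate the two integrals. For the first, inserting the expansion \eqref{eqs2.1} of $\varphi_1$ and changing variables $z=\mu(y-x)$, using the scaling $U_{x,\mu}^{2^*-2}(y)=\mu^2 U_{0,1}^{2^*-2}(z)$ and $\partial_\mu U_{x,\mu}(y)=\mu^{(N-4)/2}\psi_0(z)$, I obtain
\[
\int_{B_1(x)}U_{x,\mu}^{2^*-2}\varphi_1\,\partial_\mu U_{x,\mu}
=-\frac{B_1}{\mu^{N-1}}\sum_{j=1}^\infty\int_{B_\mu(0)}U_{0,1}^{2^*-2}(z)\psi_0(z)\,\Gamma\bigl(LP_j-\tfrac{z}{\mu},0\bigr)\,dz+O(\mu^{-N-1}).
\]
Taylor-expanding $\Gamma(LP_j-z/\mu,0)=\Gamma(LP_j,0)+(\text{linear in }z)/\mu+O(|z|^2/(\mu L)^2)\Gamma(LP_j,0)$, the linear term vanishes after integration because $U_{0,1}^{2^*-2}\psi_0$ is radial in $z$. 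Using $\Gamma(LP_j,0)=L^{-(N-2)}\Gamma(P_j,0)$, this integral equals
\[
-\frac{B_1}{\mu^{N-1}L^{N-2}}\Bigl(\int_{\R^N}U_{0,1}^{2^*-2}\psi_0\Bigr)\sum_{j=1}^\infty\Gamma(P_j,0)+o\bigl(\mu^{-N+1}L^{-(N-2)}\bigr),
\]
which reproduces the Green's-function term in the statement (once multiplied by $(2+\kappa)(2^*-1)/2$ and supplied with the minus sign from \eqref{eqs2.1}).

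For the $(K_1-1)$-integral, I change variables $z=\mu(y-x)$ again, insert the expansion from $(A_2)$, and obtain
\[
\int_{B_1(x)}(K_1(y)-1)U_{x,\mu}^{2^*-1}\partial_\mu U_{x,\mu}
=\frac{1}{\mu}\sum_{i=1}^N a_{1i}\int_{B_{\delta\mu}(0)}\Bigl|x_i+\tfrac{z_i}{\mu}\Bigr|^{\beta_{1i}}U_{0,1}^{2^*-1}\psi_0\,dz+O\bigl(\mu^{-\beta_{1M}-1-\sigma_1}\bigr).
\]
Because $|x|\le\mu^{-1-\theta}$, Taylor-expand $|x_i+z_i/\mu|^{\beta_{1i}}=|z_i/\mu|^{\beta_{1i}}+\beta_{1i}x_i|z_i/\mu|^{\beta_{1i}-1}\mathrm{sgn}(z_i/\mu)+O((\mu x_i)^2|z_i/\mu|^{\beta_{1i}})$; the linear-in-$x$ term vanishes by the radial symmetry of $U_{0,1}^{2^*-1}\psi_0$, and the quadratic remainder is $o(\mu^{-\beta_1-1})$. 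Retaining only indices $i\in J_1$ (indices with $\beta_{1i}>\beta_1$ contribute $o(\mu^{-\beta_1-1})$), this gives
\[
\frac{1}{\mu^{\beta_1+1}}\sum_{i\in J_1}a_{1i}\int_{\R^N}|z_i|^{\beta_1}U_{0,1}^{2^*-1}\psi_0+o\bigl(\mu^{-\beta_1-1}\bigr).
\]
Combining the two integrals, absorbing $(2+\kappa)(2^*-1)/2$ into the coefficient of the Green's term and $1$ into the coefficient of the $K_1-1$ term, and using $\mu^{-N-1}=o(\mu^{-\beta_1-1})$ since $\beta_1<N$, gives the first identity. The identity for $(PV_{x,\mu})$ follows by an identical computation with $U\leftrightarrow V$, $\psi_0\leftrightarrow\xi_0$, $B_1\leftrightarrow B_2$, $K_1\leftrightarrow K_2$, $\beta_1\leftrightarrow\beta_2$.

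The main obstacle is the bookkeeping of competing error scales: one must check that the Taylor remainders from expanding $\Gamma(LP_j-z/\mu,0)$ and $|x_i+z_i/\mu|^{\beta_{1i}}$, together with the quadratic $\varphi_1$-terms and the tail contributions on $\Omega\setminus B_1(x)$, are all $o(\mu^{-\beta_1-1})$ and $o(\mu^{-N+1}L^{-(N-2)})$ respectively. The key inputs that make this work are the radiality of $U_{0,1}^{2^*-2}\psi_0$ (eliminating odd moments), the a priori bound $|x|\le\mu^{-1-\theta}$, and the relation $\mu L^{-(N-2)/(\beta-N+2)}\asymp 1$ from the assumption on $(x,\mu)$, which makes the Green's term and the curvature term of the same magnitude $\mu^{-\beta_1-1}$.
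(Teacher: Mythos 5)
Your proof is correct and follows essentially the same route as the paper's: both start from the common expansion \eqref{eqs2.17} specialized to $\partial_h=\partial_\mu$ (where $\alpha(N+1)=-1$ gives the error $O(\mu^{-N-1})$), and then evaluate the two resulting integrals by scaling $z=\mu(y-x)$. The only small difference is in extracting the Green's-function term: you Taylor-expand $\Gamma(LP_j-z/\mu,0)$ to second order and kill the linear piece by radiality of $U_{0,1}^{2^*-2}\psi_0$, whereas the paper simply replaces $\Gamma(x+LP_j,y)$ by $\Gamma(x+LP_j,x)$ and bounds the resulting error directly by $O(\mu^{-\beta_1-2})$; both yield the same leading term $-B_1\bigl(\int_{\R^N}U_{0,1}^{2^*-2}\psi_0\bigr)\mu^{-(N-1)}L^{-(N-2)}\sum_{j\ge1}\Gamma(P_j,0)$. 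A minor slip: the quadratic remainder in the Taylor expansion of $|x_i+z_i/\mu|^{\beta_{1i}}$ should scale like $O\bigl(x_i^2\,|z_i/\mu|^{\beta_{1i}-2}\bigr)$ rather than $O\bigl((\mu x_i)^2|z_i/\mu|^{\beta_{1i}}\bigr)$; after integrating against $U_{0,1}^{2^*-1}\psi_0$ either bound gives $o(\mu^{-\beta_1-1})$ thanks to $|x|\le\mu^{-1-\theta}$, so this does not affect the conclusion. (In fact, with that a priori bound on $x$ the linear-in-$x$ term is already $o(\mu^{-\beta_1-1})$ by size alone, so the symmetry cancellation you invoke is not even needed here.)
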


\begin{proof}
Similar to the proof of \eqref{eqs2.20}, we have

\begin{align}\begin{split}\label{eqs2.24}
\int_{B_{1}(x)} U_{x, \mu}^{2^{*}-2} \varphi_{1} \frac{\partial U_{x, \mu}}{\partial \mu} dy
&=-\frac{B_1}{\mu^{\frac{N-2}{2}}} \sum_{j=1}^{\infty} \int_{B_{1}(x)} U_{x, \mu}^{2^{*}-2} \frac{\partial U_{x, \mu}}{\partial \mu} \Gamma\left(x+L P_{j}, y\right)+O\Bigl(\frac{1}{\mu^{N+1} L^{N-2}}\Bigr) \cr
&=-\frac{B_1}{\mu^{\frac{N-2}{2}}} \sum_{j=1}^{\infty} \int_{\mathbb{R}^{N}} U_{x, \mu}^{2^{*}-2} \frac{\partial U_{x, \mu}}{\partial \mu} \Gamma\left(x+L P_{j}, x\right)+O\Bigl(\frac{1}{\mu^{\beta_1+2}}\Bigr) \cr
&=-\frac{B_1 \ds\int_{\mathbb{R}^{N}} U_{0,1}^{2^{*}-2} \psi_{0}}{\mu^{N-1} L^{N-2}} \sum_{j=1}^{\infty} \Gamma\left(P_{j}, 0\right)+O\Bigl(\frac{1}{\mu^{\beta_1+2}}\Bigr).
\end{split}
\end{align}
On the other hand, we have
\begin{align}\begin{split}\label{eqs2.25}
\int_{B_{1}(x)}(K_1(y)-1) U_{x, \mu}^{2^{*}-1} \frac{\partial U_{x, \mu}}{\partial \mu}
&=\int_{B_{\delta}(x)} \sum_{i=1}^{N} a_{1i}\left|y_{i}\right|^{\beta_{i}} U_{x, \mu}^{2^{*}-1} \frac{\partial U_{x, \mu}}{\partial \mu}+O\Bigl(\frac{1}{\mu^{\beta_{1M}+1+\sigma}}\Bigr) \cr
&=\int_{B_{\delta \mu}(0)} \sum_{i=1}^{N} a_{1i}\left|y_{i}+\mu x_{i}\right|^{\beta_{i}} \frac{1}{\mu^{\beta_{i}+1}} U_{0,1}^{2^{*}-1} \psi_{0}+O\Bigl(\frac{1}{\mu^{\beta_{1M}+1+\sigma}}\Bigr) \cr
&=\int_{B_{\delta \mu}(0)} \sum_{i=1}^{N} a_{1i}\left|y_{i}\right|^{\beta_{i}} \frac{1}{\mu^{\beta_{i}+1}} U_{0,1}^{2^{*}-1} \psi_{0}+o\Bigl(\frac{1}{\mu^{\beta_1+1}}\Bigr) \cr
&=\frac{1}{\mu^{\beta+1}} \sum_{i \in J} a_{1i} \int_{\mathbb{R}^{N}}|y_i|^{\beta} U_{0,1}^{2^{*}-1} \psi_{0}+o\Bigl(\frac{1}{\mu^{\beta_1+1}}\Bigr).
\end{split}
\end{align}
So the result follows.
\end{proof}

\section{the invertibility of the linear operator}
We want to find a solution of the form  $(PU_{x,\mu}+\omega_{1},PV_{x,\mu}+\omega_{2})$  for \eqref{eqs1.1}. Then  $(\omega_{1},\omega_{2})$  satisfies
\begin{equation}\label{eqs3.1}
L(\omega_{1},\omega_{2})=\ell+N(\omega_{1},\omega_{2}),
\end{equation}
where
\begin{equation}\label{eqs3.2}
L(\omega_{1},\omega_{2})=(L_1(\omega_{1},\omega_{2}),L_2(\omega_{1},\omega_{2})),
\end{equation}

\begin{equation*}\begin{split}\label{eqs3.2(1)}
L_1(\omega_{1},\omega_{2})=&-\Delta\omega_{1}-(2^{*}-1) K_1(y)(PU_{x, \mu})^{2^{*}-2} \omega_{1}-\frac{1}{N-2}(PU_{x,\mu})^{\frac{4-N}{N-2}}(PV_{x,\mu})^{\frac{N}{N-2}}\omega_{1}\\
& -\frac{N}{2(N-2)}(PU_{x,\mu})^{\frac{2}{N-2}}(PV_{x,\mu})^{\frac{2}{N-2}}\omega_{2},
\end{split}
\end{equation*}
\begin{equation*}\begin{split}\label{eqs3.2(2)}
L_2(\omega_{1},\omega_{2})=&-\Delta \omega_{2}-(2^{*}-1) K_2(y)(PV_{x, \mu})^{2^{*}-2} \omega_{2}-\frac{1}{N-2}(PV_{x,\mu})^{\frac{4-N}{N-2}}(PU_{x,\mu})^{\frac{N}{N-2}}\omega_{2}\\
& -\frac{N}{2(N-2)}(PV_{x,\mu})^{\frac{2}{N-2}}(PU_{x,\mu})^{\frac{2}{N-2}}\omega_{1},
\end{split}
\end{equation*}
\begin{equation}\label{eqs3.3}
\ell=(\ell_1,\ell_2),
\end{equation}

\begin{equation*}\label{wqf1}
\ell_1=K_1(y)(PU_{x, \mu})^{2^{*}-1}-U_{x, \mu}^{2^{*}-1}-\frac{1}{2}\Bigl(U_{x,\mu}^{\frac{2^*}{2}-1}V_{x, \mu}^\frac{2^*}{2}-(PU_{x,\mu})^{\frac{2^*}{2}-1}(PV_{x,\mu})^\frac{2^*}{2}\Bigr),
\end{equation*}
\begin{equation*}\label{wqf2}
\ell_2=K_2(y)(PV_{x, \mu})^{2^{*}-1}-V_{x, \mu}^{2^{*}-1}-\frac{1}{2}\Bigl(V_{x,\mu}^{\frac{2^*}{2}-1}U_{x, \mu}^\frac{2^*}{2}-(PV_{x,\mu})^{\frac{2^*}{2}-1}(PU_{x,\mu})^\frac{2^*}{2}\Bigr),
\end{equation*}

\begin{equation}\label{eqs3.4}
N(\omega_{1},\omega_{2})=(N_1(\omega_{1},\omega_{2}),N_2(\omega_{1},\omega_{2})),
\end{equation}

\begin{equation*}\begin{split}\label{eqs3.3(1)}
N_1(\omega_{1},\omega_{2})&=K_1(y)\Bigl((PU_{x, \mu}+\omega_{1})_{+}^{2^{*}-1}-(PU_{x, \mu})^{2^{*}-1}-(2^{*}-1)(PU_{x, \mu})^{2^{*}-2}\omega_{1}\Bigr)\\
&\quad +\frac{1}{2}\Bigl((PU_{x,\mu}+\omega_{1})^{\frac{2}{N-2}}(PV_{x,\mu}+\omega_{2})^{\frac{N}{N-2}}-(PU_{x,\mu})^{\frac{2}{N-2}}(PV_{x,\mu})^{\frac{N}{N-2}}\Bigr)\\
&\quad -\frac{1}{N-2}(PU_{x,\mu})^{\frac{4-N}{N-2}}(PV_{x,\mu})^{\frac{N}{N-2}}\omega_{1}-\frac{N}{2(N-2)}(PU_{x,\mu})^{\frac{2}{N-2}}(PV_{x,\mu})^{\frac{2}{N-2}}\omega_{2}
\end{split}
\end{equation*}
and
\begin{equation*}\begin{split}\label{eqs3.3(2)}
N_2(\omega_{1},\omega_{2})
&=K_2(y)\Bigl((PV_{x,\mu}+\omega_{2})_{+}^{2^{*}-1}-(PV_{x, \mu})^{2^{*}-1}-(2^{*}-1)(PV_{x, \mu})^{2^{*}-2}\omega_{2}\Bigr)\\
&\quad +\frac{1}{2}\Bigl((PV_{x,\mu}+\omega_{2})^{\frac{2}{N-2}}(PU_{x,\mu}+\omega_{1})^{\frac{N}{N-2}}-(PV_{x,\mu})^{\frac{2}{N-2}}(PU_{x,\mu})^{\frac{N}{N-2}}\Bigr)\\
&\quad -\frac{1}{N-2}(PV_{x,\mu})^{\frac{4-N}{N-2}}(PU_{x,\mu})^{\frac{N}{N-2}}\omega_{2}-\frac{N}{2(N-2)}(PV_{x,\mu})^{\frac{2}{N-2}}(PU_{x,\mu})^{\frac{2}{N-2}}\omega_{1}.
\end{split}
\end{equation*}

We define the space  $\mathbf{Y}$  as
$$\mathbf{Y}=\Bigl\{(f,g):\|(f,g)\|_{**}<+\infty\Bigr\},$$
where  $\|(f,g)\|_{**}=\|f\|_{**}+\|g\|_{**}$  and $\|f\|_{**}$ is given in \eqref{eqs1.9}.
For  $(f,g)\in \mathbf{Y}$, we recall that
$$(u,v):=(-\Delta)^{-1} (f,g)=\Bigl(\int_{\Omega} G(z, y) f(z) dz,\int_{\Omega} G(z, y) g(z) dz\Bigr).$$
Note that if we extend  $(f,g)$  periodically to  $\mathbb{R}^{N}$ , we have
\begin{align*}
(-\Delta)^{-1} (f,g) & =\Bigl(\int_{\Omega} G(z, y) f(z) dz,\int_{\Omega} G(z, y) g(z) dz\Bigr)\\
&=\Bigl(\sum_{j=0}^{\infty} \int_{\Omega} \Gamma\left(z+L P_{j}, y\right) f(z) dz,\sum_{j=0}^{\infty} \int_{\Omega} \Gamma\left(z+L P_{j}, y\right) g(z) dz \Bigr)\\
&=\Bigl(\sum_{j=0}^{\infty} \int_{\Omega-L P_{j}} \Gamma(z, y) f(z) d z,\sum_{j=0}^{\infty} \int_{\Omega-L P_{j}} \Gamma(z, y) g(z) d z\Bigr)\\
&=\Bigl(\int_{\mathbb{R}^{N}} \Gamma(z, y) f(z) dz,\int_{\mathbb{R}^{N}} \Gamma(z, y) g(z) dz\Bigr).
\end{align*}

\begin{lemma}\label{lm3.1}
$(-\Delta)^{-1}$  is a bounded linear operator from  $\mathbf{Y}$  to  $\mathbf{X}$ .
\end{lemma}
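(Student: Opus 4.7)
The plan is to establish the pointwise weighted bound $\|u\|_{*} \le C\|f\|_{**}$, where $u(y)=\int_{\Omega} G(z,y) f(z)\,dz$; the estimate for $v$ is identical, so it yields $\|(-\Delta)^{-1}(f,g)\|_{*} \le C\|(f,g)\|_{**}$. Membership of $u$ in $\mathbf{X}$ (the boundary conditions) is automatic because $G$ is built to satisfy them. Using the periodic-extension identity recorded just above the lemma, I rewrite $u(y)=\int_{\R^N}\Gamma(z,y)\tilde f(z)\,dz$, where $\tilde f$ is the $LP_j$-periodic extension of $f|_{\Omega}$. The $\|f\|_{**}$-pointwise bound extends automatically to $\tilde f$ on all of $\R^N$, so the task reduces to controlling a Newton convolution against the weight $\sigma(z)\sum_{j}\mu^{(N+2)/2}(1+\mu|z-x_j|)^{-(N+2)/2-\tau}$.

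The main analytic ingredient is the classical per-term convolution estimate
$$\int_{\R^N} \frac{1}{|y-z|^{N-2}}\cdot \frac{\mu^{(N+2)/2}}{(1+\mu|z-x_j|)^{(N+2)/2+\tau}}\,dz \le C\,\frac{\mu^{(N-2)/2}}{(1+\mu|y-x_j|)^{(N-2)/2+\tau}}.$$
I would prove it by the change of variables $w=\mu(z-x_j)$ (which produces an overall factor $\mu^{-2}$) and splitting $\R^N$ into the three regions $\{|w|\le\tfrac{1}{2}|\mu(y-x_j)|\}$, $\{|w-\mu(y-x_j)|\le\tfrac{1}{2}|\mu(y-x_j)|\}$ and the complement. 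On each piece one of the two factors is essentially frozen, so the remaining integral converges (using $\tau<N/2$) and gives the claimed bound.

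The hard part is transferring the weight $\sigma(z)$ inside the integral to $\sigma(y)$ outside. I split the integration according to whether $|z-x|\le 2|y-x|$ or not. In the first region, monotonicity of $t\mapsto t^{\tau-1}$ (valid since $\tau-1>0$ because $\tau=(N-2)/2-\vartheta$ with $N\ge 5$ and $\vartheta$ small) gives $\sigma(z)\le C\sigma(y)$ directly, so the per-term estimate applies. In the second region, the triangle inequality yields $|y-z|\ge\tfrac{1}{2}|z-x|$, so extra decay from $|y-z|^{-(N-2)}$ combined with the tail of $(1+\mu|z-x_j|)^{-(N+2)/2-\tau}$ dominates the ratio $\sigma(z)/\sigma(y)$. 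Summing in $j$ then uses exactly the tail bound $\sum_{j\ge 1}|y-x_j|^{-\tau}\le C(1+|y''|)^{-(\tau-k)}$ recorded in the introduction, which is valid precisely because $k<(N-2)/2<\tau$, to guarantee convergence and produce the final estimate $|u(y)|\le C\|f\|_{**}\,\sigma(y)\sum_{j\ge 0}\mu^{(N-2)/2}(1+\mu|y-x_j|)^{-(N-2)/2-\tau}$, i.e.\ $\|u\|_{*}\le C\|f\|_{**}$. Linearity of $(-\Delta)^{-1}$ is immediate, so this completes the proof.
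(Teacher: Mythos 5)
Your reduction matches the paper's: extend $f$ and $\sigma$ periodically, use the identity $\int_{\Omega}G(z,y)\cdot=\int_{\mathbb{R}^N}\Gamma(z,y)\cdot$, and appeal to a weighted Newton–potential estimate. The paper, however, simply cites the weighted estimate
\[
\int_{\mathbb{R}^N}\Gamma(z,y)\,\sigma(z)\sum_{j\geq 0}\frac{\mu^{\frac{N+2}{2}}}{(1+\mu|z-x_j|)^{\frac{N+2}{2}+\tau}}\,dz\leq C\,\sigma(y)\sum_{j\geq 0}\frac{\mu^{\frac{N-2}{2}}}{(1+\mu|y-x_j|)^{\frac{N-2}{2}+\tau}}
\]
without proof (it is of the type proved in Appendix A for Lemma \ref{lem3.4} and in the references), whereas you attempt to establish it. That is legitimate, but two points need fixing.

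First, the chain $k<\frac{N-2}{2}<\tau$ is backwards: by definition $\tau=\frac{N-2}{2}-\vartheta<\frac{N-2}{2}$, so what holds (for $\vartheta$ small) is $k<\tau<\frac{N-2}{2}$. The same inequality $\tau<\frac{N-2}{2}$ is also exactly what makes your per-term convolution estimate valid (you wrote $\tau<N/2$, which is weaker than what is actually used).

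Second, and more importantly, the ``second region'' step does not work term by term in $j$. For $y$ near $x$ (so $\sigma(y)\sim\mu^{1-\tau}$ is small) and $j\neq 0$, the factor $\tilde\sigma(z)$ equals $1$ on the bulk of the region where $W_j$ lives, so the left side for a fixed $j\neq 0$ is of order $\mu^{-\tau}(L|P_j|)^{-(\frac{N-2}{2}+\tau)}$, while $\sigma(y)\frac{\mu^{\frac{N-2}{2}}}{(1+\mu|y-x_j|)^{\frac{N-2}{2}+\tau}}\sim\mu^{1-2\tau}(L|P_j|)^{-(\frac{N-2}{2}+\tau)}$; the ratio is $\mu^{\tau-1}\to\infty$. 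So the decay does \emph{not} ``dominate the ratio $\sigma(z)/\sigma(y)$'' for those terms. The estimate is still true, but only after summing: the total $j\neq0$ contribution, of order $\mu^{-\tau}(\mu L)^{-(\frac{N-2}{2}+\tau)}\cdot\mu^{\frac{N-2}{2}+\tau}=\mu^{-\tau}L^{-(\frac{N-2}{2}+\tau)}$, must be compared against the dominant $j=0$ term of the target, $\sigma(y)\frac{\mu^{\frac{N-2}{2}}}{(1+\mu|y-x|)^{\frac{N-2}{2}+\tau}}$, and the comparison works precisely because of the factor $(\mu L)^{-\frac{N-2}{2}-\tau}$. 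This is exactly how the paper's Appendix A treats the analogous estimate: inequalities \eqref{eqsA.4}–\eqref{eqsA.6} absorb the $j\geq 1$ terms into the $j=0$ term of the right-hand side rather than matching them termwise. You should restructure the second-region argument accordingly; as currently stated the claimed per-term domination is false.
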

\begin{proof}
 We extend the function  $\sigma(z)$  periodically to  $\mathbb{R}^{N}$ . For any  $(f,g) \in \mathbf{Y}$ , we have
\begin{align*}
|u(y)| & \leq\|f\|_{**} \int_{\Omega} G(z, y) \sigma(z) \sum_{j=0}^{\infty} \frac{\mu^{\frac{N+2}{2}}}{\left(1+\mu\left|z-x_{j}\right|\right)^{\frac{N+2}{2}+\tau}} d z \\
& =\|f\|_{**} \int_{\mathbb{R}^{N}} \Gamma(z, y) \sigma(z) \sum_{j=0}^{\infty} \frac{\mu^{\frac{N+2}{2}}}{\left(1+\mu\left|z-x_{j}\right|\right)^{\frac{N+2}{2}+\tau}} d z \\
& \leq C\|f\|_{**} \sigma(y) \sum_{j=0}^{\infty} \frac{\mu^{\frac{N-2}{2}}}{\left(1+\mu\left|y-x_{j}\right|\right)^{\frac{N-2}{2}+\tau}}.
\end{align*}
This gives  $\|u\|_{*} \leq C\|f\|_{**}$. In the same way, we can get $\|v\|_{*} \leq C\|g\|_{**}$.
\end{proof}

For $x \in B_{1}(0)$  and  $\mu>0$  large, we denote
$$Y_{h}=\frac{\partial U_{x,\mu}}{\partial x_{h}},\quad h=1, \ldots, N,\, Y_{N+1}=\frac{\partial U_{x, \mu}}{\partial \mu},$$
$$Z_{h}=\frac{\partial V_{x,\mu}}{\partial x_{h}},\quad h=1, \ldots, N,\, Z_{N+1}=\frac{\partial V_{x,\mu}}{\partial \mu},$$
$$\partial_{h} PU_{x, \mu}=\frac{\partial PU_{x, \mu}}{\partial x_{h}}, \quad h=1, \ldots, N,\quad \partial_{N+1} PU_{x, \mu}=\frac{\partial PU_{x, \mu}}{\partial \mu}$$
and
$$\partial_{h} PV_{x, \mu}=\frac{\partial PV_{x, \mu}}{\partial x_{h}}, \quad h=1, \ldots, N,\quad \partial_{N+1} PV_{x, \mu}=\frac{\partial PV_{x, \mu}}{\partial \mu}.$$
Let
\begin{equation}\label{eqs3.6}
\mathbf{E}:=\Bigl\{(u,v)\in \mathbf{X}: \int_{\Omega} u U_{x, \mu}^{2^{*}-2} Y_{h}=0,\,\int_{\Omega}v V_{x, \mu}^{2^{*}-2} Z_{h}=0, \,h=1, \ldots, N+1\Bigr\}
\end{equation}
and
\begin{equation}\label{eqs3.7}
\mathbf{F}:=\Bigl\{(f,g)\in \mathbf{Y}: \int_{\Omega} f \partial_{h} PU_{x, \mu}=0,\,\int_{\Omega} g \partial_{h} PV_{x, \mu}=0,\,h=1, \ldots, N+1\Bigr\}.
\end{equation}

Note that
\begin{align*}
\int_{\Omega} f \partial_{m} PU_{x, \mu}&=-\int_{\Omega} \Delta u \partial_{m} PU_{x, \mu}=-\int_{\Omega} \Delta(\partial_{m} PU_{x, \mu})u\\
&= \int_{\Omega} \Bigl((2^{*}-1) U_{x, \mu}^{2^{*}-2} Y_{m}+\frac{1}{N-2}U_{x, \mu}^{\frac{2^*}{2}-2}V_{x, \mu}^\frac{2^*}{2}Y_{m}+\frac{ N}{2(N-2)} U_{x, \mu}^{\frac{2^*}{2}-1}V_{x, \mu}^{\frac{2^*}{2}-1}Z_{m}\Bigr) u.
\end{align*}
Since $V_{x, \mu}=\kappa U_{x, \mu}$, we have
$$\int_{\Omega} f \partial_{m} PU_{x, \mu}=C\int_{\Omega} u U_{x, \mu}^{2^{*}-2} Y_{m}.$$
Similarly, we have
$$\int_{\Omega} g \partial_{m} PV_{x, \mu}=C\int_{\Omega} v V_{x, \mu}^{2^{*}-2} Z_{m}.$$
So, we see that  $(f,g) \in \mathbf{F}$  if and only if  $(u,v)=((-\Delta)^{-1} f, (-\Delta)^{-1} g)\in \mathbf{E}$.

Let  $\mathbf{P}$  be the operator defined as follows:
$$
\mathbf{P} (f,g)=\Bigl(f+\sum_{h=1}^{N+1} c_{h}U_{x, \mu}^{2^{*}-2}Y_{h},\quad g+\sum_{h=1}^{N+1} c_{h}V_{x, \mu}^{2^{*}-2}Z_{h} \Bigr), \quad (f,g) \in \mathbf{Y},
$$
where $c_{h}$  are chosen such that  $\mathbf{P}(f,g) \in \mathbf{F}$ . Then it is easy to check that
$$
\|\mathbf{P}(f,g)\|_{**} \leq C\|(f,g)\|_{**}.
$$

In the following, for fixed  $(x, \mu)$ , instead of \eqref{eqs3.1}, we consider the following problem
$$(-\Delta)^{-1}(\mathbf{P}L(\omega_{1},\omega_{2}))=(-\Delta)^{-1}(\mathbf{P}\ell)+(-\Delta)^{-1}(\mathbf{P}N(\omega_{1},\omega_{2})),$$
which is equivalent to
\begin{equation}\label{eqs3.8}
(\omega_{1},\omega_{2})-T (\omega_{1},\omega_{2})=(-\Delta)^{-1} (\mathbf{P}\ell+\mathbf{P} N(\omega_{1},\omega_{2})),
\end{equation}
where
\begin{equation}\begin{split}\label{eqs3.9}
T (\omega_{1},\omega_{2})&=(2^{*}-1)(-\Delta)^{-1}\mathbf{P}\Bigl(K_1(y)(PU_{x, \mu})^{2^{*}-2} \omega_{1}-\frac{1}{N+2}(PU_{x,\mu})^{\frac{4-N}{N-2}}(PV_{x,\mu})^{\frac{N}{N-2}}\omega_{1}\\
&\quad -\frac{N}{2(N+2)}(PU_{x,\mu})^{\frac{2}{N-2}}(PV_{x,\mu})^{\frac{2}{N-2}}\omega_{2},\quad
K_2(y)(PV_{x, \mu})^{2^{*}-2} \omega_{2}\\
&\quad -\frac{1}{N+2}(PV_{x,\mu})^{\frac{4-N}{N-2}}(PU_{x,\mu})^{\frac{N}{N-2}}\omega_{2}
-\frac{N}{2(N+2)}(PV_{x,\mu})^{\frac{2}{N-2}}(PU_{x,\mu})^{\frac{2}{N-2}}\omega_{1}\Bigr)\\
&=(2^{*}-1)(-\Delta)^{-1}\Bigl(K_1(y)(PU_{x, \mu})^{2^{*}-2} \omega_{1}-\frac{1}{N+2}(PU_{x,\mu})^{\frac{4-N}{N-2}}(PV_{x,\mu})^{\frac{N}{N-2}}\omega_{1}\\
&\quad -\frac{N}{2(N+2)}(PU_{x,\mu})^{\frac{2}{N-2}}(PV_{x,\mu})^{\frac{2}{N-2}}\omega_{2}+\sum_{h=1}^{N+1} c_{h}U_{x, \mu}^{2^{*}-2} Y_{h},\\
&\quad K_2(y)(PV_{x, \mu})^{2^{*}-2} \omega_{2}-\frac{1}{N+2}(PV_{x,\mu})^{\frac{4-N}{N-2}}(PU_{x,\mu})^{\frac{N}{N-2}}\omega_{2}\\
&\quad -\frac{N}{2(N+2)}(PV_{x,\mu})^{\frac{2}{N-2}}(PU_{x,\mu})^{\frac{2}{N-2}}\omega_{1}+\sum_{h=1}^{N+1} c_{h}V_{x, \mu}^{2^{*}-2} Z_{h}\Bigr),\quad (\omega_{1},\omega_{2}) \in \mathbf{E}.
\end{split}\end{equation}
It is easy to see that the constants  $c_{h}$  in \eqref{eqs3.9} are determined by
\begin{align}\begin{split}\label{eqs3.11}
&\sum_{m=1}^{N+1} c_{m} \int_{\Omega} U_{x, \mu}^{2^{*}-2} Y_{m} \partial_{h}(PU_{x, \mu})\\
&\quad =-\int_{\Omega} \Bigl(K_1(y)(PU_{x, \mu})^{2^{*}-2} \omega_1-\frac{1}{N+2}(PU_{x,\mu})^{\frac{4-N}{N-2}}(PV_{x,\mu})^{\frac{N}{N-2}}\omega_1\\
&\quad \quad -\frac{N}{2(N+2)}(PU_{x,\mu})^{\frac{2}{N-2}}(PV_{x,\mu})^{\frac{2}{N-2}}\omega_2\Bigr)\partial_{h}(PU_{x, \mu}),\quad h=1, \ldots, N+1 \\
\end{split}
\end{align}
and
\begin{align}\begin{split}\label{eqs3.12}
&\sum_{m=1}^{N+1} c_{m} \int_{\Omega} V_{x, \mu}^{2^{*}-2} Z_{m} \partial_{h}(PV_{x, \mu})\\
&\quad =-\int_{\Omega} \Bigl(K_2(y)(PV_{x, \mu})^{2^{*}-2} \omega_2-\frac{1}{N+2}(PV_{x,\mu})^{\frac{4-N}{N-2}}(PU_{x,\mu})^{\frac{N}{N-2}}\omega_2\\
&\quad \quad -\frac{N}{2(N+2)}(PV_{x,\mu})^{\frac{2}{N-2}}(PU_{x,\mu})^{\frac{2}{N-2}}\omega_1\Bigr)\partial_{h}(PV_{x, \mu}), \quad h=1, \ldots, N+1.
\end{split}
\end{align}

 We will use Fredholm theorem to prove that the linear operator $I-T$ is invertible. Then, it is sufficient to prove that T  is bounded and compact and $I-T$  is injective.

First, we prove the following result.
\begin{lemma}\label{lem3.3}
For any $(\omega_1,\omega_2) \in \mathbf{E}$, we have
\begin{align}\begin{split}\label{eqs3.13}
&\Bigl|\int_{\Omega} \Bigl(K_1(y)(PU_{x, \mu})^{2^{*}-2} \omega_1-\frac{1}{N+2}(PU_{x,\mu})^{\frac{4-N}{N-2}}(PV_{x,\mu})^{\frac{N}{N-2}}\omega_1\\
&\quad -\frac{N}{2(N+2)}(PU_{x,\mu})^{\frac{2}{N-2}}(PV_{x,\mu})^{\frac{2}{N-2}}\omega_2\Bigr)\partial_{h}(PU_{x, \mu})\Bigr| \leq \frac{C \mu^{\alpha(h)}\|(\omega_1,\omega_2)\|_{*}}{\mu^{\beta_1}},
\end{split}
\end{align}
\begin{align}\begin{split}\label{eqs3.14}
&\Bigl|\int_{\Omega} \Bigl(K_2(y)(PV_{x, \mu})^{2^{*}-2} \omega_2-\frac{1}{N+2}(PV_{x,\mu})^{\frac{4-N}{N-2}}(PU_{x,\mu})^{\frac{N}{N-2}}\omega_2\\
&\quad -\frac{N}{2(N+2)}(PV_{x,\mu})^{\frac{2}{N-2}}(PU_{x,\mu})^{\frac{2}{N-2}}\omega_1\Bigr)\partial_{h}(PV_{x, \mu})\Bigr| \leq \frac{C \mu^{\alpha(h)}\|(\omega_1,\omega_2)\|_{*}}{\mu^{\beta_2}}
\end{split}
\end{align}
and
\begin{equation}\label{eqs3.15}
\left|c_{h}\right| \leq \frac{C}{\mu^{\alpha(h)+\beta}}\|(\omega_1,\omega_2)\|_{*},
\end{equation}
where  $\alpha(h)$  is defined in \eqref{eqs2.7}.
\end{lemma}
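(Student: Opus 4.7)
My plan is to establish \eqref{eqs3.13}--\eqref{eqs3.14} by exploiting the orthogonality conditions defining $\mathbf{E}$ together with the expansions obtained in Section 2, and then to deduce \eqref{eqs3.15} from the linear system \eqref{eqs3.11}--\eqref{eqs3.12}.

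First, I would replace $PU_{x,\mu}$, $PV_{x,\mu}$ and $\partial_h PU_{x,\mu}$ by $U_{x,\mu}$, $V_{x,\mu}=\kappa U_{x,\mu}$ and $Y_h$ using Lemmas \ref{lm2.1} and \ref{lm2.2}. Because $V_{x,\mu}=\kappa U_{x,\mu}$, the three bracketed terms in \eqref{eqs3.13} collapse at leading order into $U_{x,\mu}^{2^*-2}Y_h\,(a_\kappa\omega_1+b_\kappa\omega_2)$, with constants $a_\kappa,b_\kappa$ produced by the identities $U^{(4-N)/(N-2)}V^{N/(N-2)}=\kappa^{N/(N-2)}U^{4/(N-2)}$ and $U^{2/(N-2)}V^{2/(N-2)}=\kappa^{2/(N-2)}U^{4/(N-2)}$. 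The orthogonality $\int_\Omega \omega_1 U_{x,\mu}^{2^*-2}Y_h=0$ holds by definition of $\mathbf{E}$, while $\int_\Omega \omega_2 U_{x,\mu}^{2^*-2}Y_h=0$ follows from $Z_h=\kappa Y_h$ and the identity $V_{x,\mu}^{2^*-2}Z_h=\kappa^{2^*-1}U_{x,\mu}^{2^*-2}Y_h$, so the whole leading term vanishes upon integration.

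What remains are error contributions of three types: (a) the perturbation $(K_1(y)-1)U_{x,\mu}^{2^*-2}Y_h$, which carries the weight $|y|^{\beta_1}$ on $B_\delta(0)$ by $(A_2)$; (b) differences such as $(PU_{x,\mu})^{2^*-2}-U_{x,\mu}^{2^*-2}$ and $(PV_{x,\mu})^{N/(N-2)}-V_{x,\mu}^{N/(N-2)}$, which by Lemma \ref{lm2.2} are of size $O(\mu^{-(N-2)/2}L^{-(N-2)})$; and (c) the difference $\partial_h PU_{x,\mu}-Y_h$, which by \eqref{eqs2.2} is of the same size. I would bound each error pointwise against
\[
|\omega_i(y)|\le \|\omega_i\|_{*}\,\sigma(y)\sum_{j=0}^{\infty}\frac{\mu^{(N-2)/2}}{(1+\mu|y-x_j|)^{(N-2)/2+\tau}},
\]
and split the domain into $B_1(x)$, where the $j=0$ bubble dominates, and $\Omega\setminus B_1(x)$, where one uses the uniform decay $\sum_{j\ge 1}U_{x_j,\mu}(y)\le C\mu^{(N-2)/2}(1+\mu|y-x|)^{-(N-2)/2-\theta}(\mu L)^{-(N-2)/2+\theta}$ already recorded in Section 2. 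Rerunning the integral estimates from the proof of Proposition \ref{pro2.3} then yields the claimed bound $C\mu^{\alpha(h)-\beta_l}\|(\omega_1,\omega_2)\|_*$ on the $l$-th integral.

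To obtain \eqref{eqs3.15}, I would analyse the coefficient matrix of \eqref{eqs3.11}--\eqref{eqs3.12}. Writing $\partial_h PU_{x,\mu}=Y_h-\partial_h\varphi_1$ via Lemma \ref{lm2.2} and rescaling $y\mapsto x+y/\mu$ gives
\[
\int_\Omega U_{x,\mu}^{2^*-2}Y_m\,\partial_h PU_{x,\mu}=\mu^{\alpha(m)+\alpha(h)}\int_{\mathbb{R}^N}U_{0,1}^{2^*-2}\psi_m\psi_h\,dy+\mathrm{l.o.t.},
\]
a diagonally dominant matrix with diagonal of order $\mu^{2\alpha(h)}$; the same analysis applies to the $V$-system, and because $V_{x,\mu}=\kappa U_{x,\mu}$ the two systems for the single sequence $c_m$ are compatible. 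Combining this invertibility with the right-hand side bound from \eqref{eqs3.13}--\eqref{eqs3.14} and solving yields $|c_h|\le C\mu^{-\alpha(h)-\beta}\|(\omega_1,\omega_2)\|_*$ with $\beta=\min(\beta_1,\beta_2)$. The main obstacle I expect is the opening algebraic step of reorganising the three coupled integrand terms into a single multiple of $U_{x,\mu}^{2^*-2}Y_h$ so that both orthogonality conditions in $\mathbf{E}$ can be invoked at once; once that cancellation is engineered, the remaining estimates reduce to ones of the type already handled in Section 2.
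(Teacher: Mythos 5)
Your proposal is essentially the same as the paper's proof. The paper decomposes the integrand (see \eqref{eqs3.16}) into pieces $J_1,J_2,J_3,J_4$: $J_1,J_3$ collect the differences $(PU_{x,\mu})^{2^*-2}-U_{x,\mu}^{2^*-2}$ coming from your item (b); $J_2$ collects the $(K_1(y)-1)$ perturbation (your item (a)), which via $(A_2)$ produces the dominant order $\mu^{-\beta_1}$; and $J_4$ is precisely your collapsed leading term $\int_\Omega(C_3\omega_1+C_4\omega_2)U_{x,\mu}^{2^*-2}\partial_h(PU_{x,\mu})$, which is handled exactly by the orthogonality identity you describe — $V_{x,\mu}^{2^*-2}Z_h=\kappa^{2^*-1}U_{x,\mu}^{2^*-2}Y_h$ makes both conditions in $\mathbf{E}$ annihilate the $Y_h$ part, leaving the $\partial_h\varphi_1$ residual (your item (c)) whose size is controlled by Lemma \ref{lm2.2}. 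The paper's subsequent estimates (3.17)--(3.34) are the $B_1(x)$/$\Omega\setminus B_1(x)$ split that you sketch, and \eqref{eqs3.35} is the diagonal-dominance computation you propose for inverting the matrix to get \eqref{eqs3.15}.
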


\begin{proof}
We first prove \eqref{eqs3.13}. Write
\begin{align}\begin{split}\label{eqs3.16}
&\int_{\Omega} \Bigl(K_1(y)(PU_{x, \mu})^{2^{*}-2} \omega_1-\frac{(PU_{x,\mu})^{\frac{4-N}{N-2}}(PV_{x,\mu})^{\frac{N}{N-2}}}{N+2}\omega_1\\
&\quad \quad -\frac{N(PU_{x,\mu})^{\frac{2}{N-2}}(PV_{x,\mu})^{\frac{2}{N-2}}}{2(N+2)}\omega_2\Bigr)\partial_{h}(PU_{x, \mu}) \\
&\quad=\int_{\Omega} K_1(y)\Bigl((PU_{x, \mu})^{2^{*}-2}-U_{x, \mu}^{2^{*}-2}\Bigr) \omega_1 \partial_h (PU_{x, \mu})+\int_{\Omega}(K_1(y)-1) U_{x, \mu}^{2^{*}-2} \omega_1 \partial_h (PU_{x, \mu}) \\
&\quad \quad+\int_{\Omega} U_{x, \mu}^{2^{*}-2} \omega_1 \partial_h (PU_{x, \mu})-\frac{1}{N+2}\int_{\Omega}(PU_{x,\mu})^{\frac{4-N}{N-2}}(PV_{x,\mu})^{\frac{N}{N-2}}\omega_1\partial_{h}(PU_{x, \mu})\\
&\quad \quad-\frac{N}{2(N+2)}\int_{\Omega}(PU_{x,\mu})^{\frac{2}{N-2}}(PV_{x,\mu})^{\frac{2}{N-2}}\omega_2 \partial_{h}(PU_{x, \mu})\\
&\quad=\int_{\Omega} K_1(y)\Bigl((PU_{x, \mu})^{2^{*}-2}-U_{x, \mu}^{2^{*}-2}\Bigr) \omega_1 \partial_h (PU_{x, \mu})+\int_{\Omega}(K_1(y)-1) U_{x, \mu}^{2^{*}-2} \omega_1 \partial_h (PU_{x, \mu})\\
&\quad \quad+\int_{\Omega} (C_1\omega_1 + C_2\omega_2)((PU_{x, \mu})^{2^{*}-2}-U_{x, \mu}^{2^{*}-2})\partial_h (PU_{x, \mu}) + \int_{\Omega} (C_3\omega_1+ C_4\omega_2)U_{x, \mu}^{2^{*}-2}\partial_h (PU_{x, \mu})\\
&\quad :=J_{1}+J_{2}+J_{3}+J_{4}.
\end{split}
\end{align}

By Lemma 2.1, we have
\begin{align}\begin{split}\label{eqs3.17}
\left|J_{1}\right| & \leq C \mu^{\alpha(h)} \int_{\Omega}\Bigl|(PU_{x, \mu})^{2^{*}-2}-U_{x, \mu}^{2^{*}-2}\Bigr||\omega_1| PU_{x, \mu} \\
& \leq C\|\omega_1\|_{*} \mu^{\alpha(h)} \int_{\Omega}\Bigl|(P U_{x, \mu})^{2^{*}-2}-U_{x, \mu}^{2^{*}-2}\Bigr| \sum_{j=0}^{\infty} \frac{\mu^{\frac{N-2}{2}}}{(1+\mu\left|y-x_{j}\right|)^{\frac{N-2}{2}+\tau}} P U_{x, \mu} \\
& \leq C\|\omega_1\|_{*} \mu^{\alpha(h)} \int_{\Omega}(P U_{x, \mu})^{2^{*}-2}\left|\varphi_{1}\right| \sum_{j=0}^{\infty} \frac{\mu^{\frac{N-2}{2}}}{(1+\mu\left|y-x_{j}\right|)^{\frac{N-2}{2}+\tau}} \\
& \leq C\|\omega_1\|_{*} \mu^{\alpha(h)} \int_{\Omega}\Bigl(\sum_{j=0}^{\infty} U_{x_{j}, \mu}\Bigr)^{2^{*}-2}\left|\varphi_{1}\right| \sum_{j=0}^{\infty} \frac{\mu^{\frac{N-2}{2}}}{(1+\mu\left|y-x_{j}\right|)^{\frac{N-2}{2}+\tau}}.
\end{split}
\end{align}
For  $y \in B_{1}(x)$, we have
\begin{align}\begin{split}\label{eqs3.18}
\sum_{j=0}^{\infty} U_{x_{j}, \mu} & \leq \frac{C \mu^{\frac{N-2}{2}}}{(1+\mu|y-x|)^{N-2}}+C \sum_{j=1}^{\infty} \frac{1}{\mu^{\frac{N-2}{2}}\left|y-x_{j}\right|^{N-2}} \\
& \leq \frac{C \mu^{\frac{N-2}{2}}}{(1+\mu|y-x|)^{N-2}}+\frac{C}{\mu^{\frac{N-2}{2}} L^{N-2}} \leq \frac{C \mu^{\frac{N-2}{2}}}{(1+\mu|y-x|)^{N-2}}
\end{split}
\end{align}
and
\begin{align}\begin{split}\label{eqs3.19}
\sum_{j=0}^{\infty} \frac{\mu^{\frac{N-2}{2}}}{\left(1+\mu\left|y-x_{j}\right|\right)^{\frac{N-2}{2}+\tau}} & \leq C \mu^{\frac{N-2}{2}}\Bigl(\frac{1}{(1+\mu|y-x|)^{\frac{N-2}{2}+\tau}}+\frac{1}{(\mu L)^{\frac{N-2}{2}+\tau}}\Bigr) \\
& \leq \frac{C \mu^{\frac{N-2}{2}}}{(1+\mu|y-x|)^{\frac{N-2}{2}+\tau}}.
\end{split}
\end{align}

Using \eqref{eqs3.18}, \eqref{eqs3.19}, and Lemma 2.2, we obtain
\begin{align}\begin{split}\label{eqs3.20}
&\int_{B_{1}(x)}\Bigl(\sum_{j=0}^{\infty} U_{x_{j}, \mu}\Bigr)^{2^{*}-2}\left|\varphi_{1}\right| \sum_{j=0}^{\infty} \frac{\mu^{\frac{N-2}{2}}}{\left(1+\mu\left|y-x_{j}\right|\right)^{\frac{N-2}{2}+\tau}} \\
&\quad \leq \frac{C}{\mu^{\frac{N-2}{2}} L^{N-2}} \int_{B_{1}(x)}\Bigl(\frac{\mu^{\frac{N-2}{2}}}{(1+\mu|y-x|)^{N-2}}\Bigr)^{2^{*}-2} \frac{\mu^{\frac{N-2}{2}}}{(1+\mu|y-x|)^{\frac{N-2}{2}+\tau}} \\
&\quad \leq \frac{C}{\mu^{N-2} L^{N-2}}.
\end{split}
\end{align}
On the other hand, using
$$\left|\varphi_{1}\right| \leq C \sum_{j=0}^{\infty} U_{x_{j}, \mu},$$
we find
\begin{align}\begin{split}\label{eqs3.21}
&\int_{\Omega \backslash B_{1}(x)}\Bigl(\sum_{j=0}^{\infty} U_{x_{j}, \mu}\Bigr)^{2^{*}-2}\left|\varphi_{1}\right| \sum_{j=0}^{\infty} \frac{\mu^{\frac{N-2}{2}}}{(1+\mu\left|y-x_{j}\right|)^{\frac{N-2}{2}+\tau}} \\
&\quad \leq \int_{\Omega \backslash B_{1}(x)}\Bigl(\sum_{j=0}^{\infty} U_{x_{j}, \mu}\Bigr)^{2^{*}-1} \sum_{j=0}^{\infty} \frac{\mu^{\frac{N-2}{2}}}{(1+\mu\left|y-x_{j}\right|)^{\frac{N-2}{2}+\tau}}.
\end{split}
\end{align}
We have
\begin{align}\begin{split}\label{eqs3.22}
\int_{\Omega \backslash B_{1}(x)} \frac{\mu^{\frac{N-2}{2}}}{\left(1+\mu|y-x|\right)^{\frac{N-2}{2}+\tau}} U_{x, \mu}^{2^{*}-1} \leq \int_{\mathbb{R}^{N} \backslash B_{\mu}(0)} \frac{1}{(1+|y|)^{N+2+\frac{N-2}{2}+\tau}} \leq \frac{C}{\mu^{N-\vartheta}}
\end{split}
\end{align}
and
\begin{align}\begin{split}\label{eqs3.23}
&\int_{\Omega \backslash B_{1}(x)} \sum_{j=1}^{\infty} \frac{\mu^{\frac{N-2}{2}}}{\left(1+\mu\left|y-x_{j}\right|\right)^{\frac{N-2}{2}+\tau}} U_{x, \mu}^{2^{*}-1}\\
&\quad \leq \frac{C \mu^{N}}{(\mu L)^{\frac{N-2}{2}+\tau}} \int_{\Omega \backslash B_{1}(x)} \frac{1}{(1+\mu|y-x|)^{N+2}} \cr
&\quad \leq \frac{C}{(\mu L)^{\frac{N-2}{2}+\tau}} \int_{\mathbb{R}^{N} \backslash B_{\mu}(0)} \frac{1}{(1+|y|)^{N+2}} \leq \frac{C}{\mu^{2}(\mu L)^{N-2-\vartheta}},
\end{split}
\end{align}
since $\tau=\frac{N-2}{2}-\vartheta.$

From
\begin{align*}
\sum_{j=1}^{\infty} U_{x_{j}, \mu} & \leq \frac{C \mu^{\frac{N-2}{2}}}{(1+\mu|y-x|)^{(2+2 \vartheta) /\left(2^{*}-1\right)}} \sum_{j=1}^{\infty} \frac{1}{\left(1+\mu\left|y-x_{j}\right|\right)^{N-2-(2+2 \vartheta) /\left(2^{*}-1\right)}} \\
& \leq \frac{C \mu^{\frac{N-2}{2}}}{(1+\mu|y-x|)^{(2+2 \vartheta) /\left(2^{*}-1\right)}} \frac{1}{(\mu L)^{N-2-(2+2 \vartheta) /\left(2^{*}-1\right)}},
\end{align*}
we obtain
\begin{align}\begin{split}\label{eqs3.24}
&\int_{\Omega \backslash B_{1}(x)} \frac{\mu^{\frac{N-2}{2}}}{(1+\mu|y-x|)^{\frac{N-2}{2}+\tau}}\Bigl(\sum_{j=1}^{\infty} U_{x_{j}, \mu}\Bigr)^{2^{*}-1} \\
&\quad \leq \int_{\mathbb{R}^{N} \backslash B_{\mu}(0)} \frac{1}{(1+|y|)^{2+2 \vartheta+\frac{N-2}{2}+\tau}} \frac{1}{(\mu L)^{\left(2^{*}-1\right)(N-2)-(2+2 \vartheta)}} \leq \frac{C}{(\mu L)^{N-2 \vartheta}}.
\end{split}
\end{align}
Similarly, we have
\begin{align}\begin{split}\label{eqs3.25}
&\int_{\Omega \backslash B_{1}(x)} \sum_{j=1}^{\infty} \frac{\mu^{\frac{N-2}{2}}}{(1+\mu|y-x_j|)^{\frac{N-2}{2}+\tau}}\Bigl(\sum_{j=1}^{\infty} U_{x_{j}, \mu}\Bigr)^{2^{*}-1} \\
&\quad \leq C \mu^{N} \int_{\Omega \backslash B_{1}(x)} \frac{1}{(1+\mu|y-x|)^{\frac{N-2}{2}}} \frac{1}{(\mu L)^{\tau}}\Bigl(\frac{1}{(1+\mu|y-x|)^{\frac{N-2}{2}+\frac{\vartheta}{2^{*}-1}}} \frac{1}{(\mu L)^{\frac{N-2}{2}-\frac{\vartheta}{2^{*}-1}}}\Bigr)^{2^{*}-1} \\
&\quad \leq C \int_{\mathbb{R}^{N} \backslash B_{\mu}(0)} \frac{1}{(1+|y|)^{N+\vartheta}} \frac{1}{(\mu L)^{\tau+\frac{N+2}{2}-\vartheta}} \leq \frac{C}{(\mu L)^{N-2 \vartheta}}.
\end{split}
\end{align}
Combining \eqref{eqs3.21}-\eqref{eqs3.25}, we find
\begin{equation}\label{eqs3.26}
\int_{\Omega \backslash B_{1}(x)}\Bigl(\sum_{j=0}^{\infty} U_{x_{j}, \mu}\Bigr)^{2^{*}-2}\left|\varphi_{1}\right| \sum_{j=0}^{\infty} \frac{\mu^{\frac{N-2}{2}}}{\left(1+\mu\left|y-x_{j}\right|\right)^{\frac{N-2}{2}+\tau}} \leq \frac{C}{\mu^{N-\vartheta}},
\end{equation}
which, together with \eqref{eqs3.20}, gives
\begin{equation}\label{eqs3.27}
\left|J_{1}\right| \leq \frac{C \mu^{\alpha(m)}\|(\omega_1,\omega_2)\|_{*}}{\mu^{\beta}}.
\end{equation}
Similar as $J_{3}$, we have
\begin{equation}\label{eqs3.28}
\left|J_{3}\right| \leq \frac{C \mu^{\alpha(m)}\|(\omega_1,\omega_2)\|_{*}}{\mu^{\beta}}.
\end{equation}
To estimate  $J_{2}$, using \eqref{eqs3.18} and \eqref{eqs3.19}, similar to \eqref{eqs3.21}, we find
\begin{align}\begin{split}\label{eqs3.29}
&\int_{\Omega}|K_1(y)-1| U_{x, \mu}^{2^{*}-2} \sum_{j=0}^{\infty} \frac{\mu^{\frac{N-2}{2}}}{\left(1+\mu\left|y-x_{j}\right|\right)^{\frac{N-2}{2}+\tau}} \sum_{j=0}^{\infty} U_{x_{j}, \mu} \\
&\quad \leq C \mu^{N} \int_{B_{1}(x)}|y|^{\beta} \frac{1}{(1+\mu|y-x|)^{4+\frac{N-2}{2}+\tau+N-2}} \\
&\quad \quad +C \int_{\Omega \backslash B_{1}(x)} U_{x, \mu}^{2^{*}-2} \sum_{j=0}^{\infty} \frac{\mu^{\frac{N-2}{2}}}{\left(1+\mu\left|y-x_{j}\right|\right)^{\frac{N-2}{2}+\tau}} \sum_{j=0}^{\infty} U_{x_{j}, \mu} \\
&\quad \leq \frac{C\left(1+|\mu x|^{\beta}\right)}{\mu^{\beta}}+\frac{C}{\mu^{N-\vartheta}} \leq \frac{C}{\mu^{\beta}} .
\end{split}
\end{align}
Thus, we get
\begin{equation}\label{eqs3.30}
\left|J_{2}\right| \leq \frac{C \mu^{\alpha(m)}\|(\omega_1,\omega_2)\|_{*}}{\mu^{\beta}}.
\end{equation}

To estimate  $J_{4}$ , noting that  $(\omega_1,\omega_2) \in \mathbf{E}$ , we see
\begin{align}\begin{split}\label{eqs3.31}
\left|J_{4}\right| & =\left| \int_{\Omega} (C_1\omega_1+ C_2\omega_2)U_{x, \mu}^{2^{*}-2}\partial_h PU_{x, \mu}\right| \\
& \leq C \mu^{\alpha(h)}\|(\omega_1,\omega_2)\|_{*} \int_{\Omega} U_{x, \mu}^{2^{*}-2} \sum_{j=0}^{\infty} \frac{\mu^{\frac{N-2}{2}}}{\left(1+\mu\left|y-x_{j}\right|\right)^{\frac{N-2}{2}+\tau}} \sum_{j=0}^{\infty} U_{x_{j}, \mu}.
\end{split}
\end{align}
Using \eqref{eqs3.19}, we obtain
\begin{align}\begin{split}\label{eqs3.32}
&\int_{B_{1}(x)} U_{x, \mu}^{2^{*}-2} \sum_{j=0}^{\infty} \frac{\mu^{\frac{N-2}{2}}}{\left(1+\mu\left|y-x_{j}\right|\right)^{\frac{N-2}{2}+\tau}} \sum_{j=0}^{\infty} U_{x_{j}, \mu} \\
&\quad \leq C \mu^{N} \int_{B_{1}(x)} \frac{1}{(1+\mu|y-x|)^{\frac{N-2}{2}+\tau+4}} \frac{1}{(\mu L)^{N-2}} \leq \frac{C}{(\mu L)^{N-2}}.
\end{split}
\end{align}

Similar to \eqref{eqs3.21}, we can prove
\begin{equation}\label{eqs3.33}
\int_{\Omega \backslash B_{1}(x)} U_{x, \mu}^{2^{*}-2} \sum_{j=0}^{\infty} \frac{\mu^{\frac{N-2}{2}}}{\left(1+\mu\left|y-x_{j}\right|\right)^{\frac{N-2}{2}+\tau}} \sum_{j=0}^{\infty} U_{x_{j}, \mu} \leq \frac{C}{\mu^{N-\vartheta}}.
\end{equation}
Thus we have proved
\begin{equation}\label{eqs3.34}
\left|J_{4}\right| \leq \frac{C \mu^{\alpha(h)}\|(\omega_1,\omega_2)\|_{*}}{\mu^{\beta}}.
\end{equation}
So \eqref{eqs3.13} follows from \eqref{eqs3.27}, \eqref{eqs3.28}, \eqref{eqs3.30} and \eqref{eqs3.34}.

Now we prove \eqref{eqs3.15}. We have
\begin{align}\begin{split}\label{eqs3.35}
\int_{\Omega} U_{x, \mu}^{2^{*}-2} Y_{m} \partial_{h}\left(PU_{x, \mu}\right) & =\int_{\Omega} U_{x, \mu}^{2^{*}-2} Y_{m} Y_{h}+O\Bigl(\int_{\Omega} U_{x, \mu}^{2^{*}-2}\left|Y_{m}\right|\left|\partial_{h} \varphi_{1}\right|\Bigr) \\
& =\int_{\mathbb{R}^{N}} U_{x, \mu}^{2^{*}-2} Y_{m}^{2}\Bigl(\delta_{m h}+o(1)\Bigr)=\mu^{2 \alpha(m)} a_{m}\Bigl(\delta_{m h}+o(1)\Bigr),
\end{split}
\end{align}
where  $a_{m}>0$  is a constant. So, we can solve \eqref{eqs3.11} and use \eqref{eqs3.13} to obtain \eqref{eqs3.15}.
\end{proof}

Next, we are going to prove T is bounded and compact. Thus, we first give some estimates for $T (\omega_{1},\omega_{2})$ defined in \eqref{eqs3.9}.

\begin{lemma}\label{lem3.4}
Suppose  $N \geq 5$, $1 \leq k<\frac{N-2}{2}$. Then for any  $\varepsilon \in(0,1)$, there are constant  $C_{\varepsilon}>0$, depending on  $\varepsilon$, and constant  $C>0$, independent of  $\varepsilon$, such that for any  $y \in \Omega$, it holds
\begin{align*}
&\int_{\Omega} G(z, y)\Bigl(\sum_{j=0}^{\infty} U_{x_{j}, \mu}\Bigr)^{\frac{4}{N-2}} \sigma(z) \sum_{j=0}^{\infty} \frac{\mu^{\frac{N-2}{2}}}{\left(1+\mu\left|z-x_{j}\right|\right)^{\frac{N-2}{2}+\tau}} dz \\
&\quad \leq C_{\varepsilon} \sigma(y) \sum_{j=0}^{\infty} \frac{\mu^{\frac{N-2}{2}}}{\left(1+\mu\left|y-x_{j}\right|\right)^{\frac{N-2}{2}+\tau+\theta}}+\frac{C \varepsilon^{2-\frac{4 \tau}{N-2}}}{\mu^{2-\frac{4 \tau}{N-2}}} \sigma(y) \sum_{j=0}^{\infty} \frac{\mu^{\frac{N-2}{2}}}{\left(1+\mu\left|y-x_{j}\right|\right)^{\frac{N-2}{2}+\tau}},
\end{align*}
where  $\theta>0$  is a fixed constant.
\end{lemma}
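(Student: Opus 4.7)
This is a standard weighted convolution estimate. The strategy is to reduce it to a Newtonian convolution on $\mathbb{R}^N$ (via the periodic structure of $G$) and then split the integration domain into a ``near'' and ``far'' region relative to the lattice $\{x_j\}$, estimating each separately.

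Writing $F(z)$ for the full integrand, I would first use $G(z,y)=\sum_{j\geq 0}\Gamma(z+LP_j,y)$ together with the $LQ_k$-periodicity of $F$ to unfold $\int_\Omega G(z,y)F(z)\,dz=\int_{\mathbb{R}^N}\Gamma(z,y)F(z)\,dz$. Then I decompose the power term $\bigl(\sum_j U_{x_j,\mu}\bigr)^{4/(N-2)}$ into a single sum: for $N\geq 6$, subadditivity gives $\bigl(\sum_j U_{x_j,\mu}\bigr)^{4/(N-2)}\leq\sum_j U_{x_j,\mu}^{4/(N-2)}$; for $N=5$, one uses \eqref{eqs3.18}--\eqref{eqs3.19} to isolate the closest bubble plus a small tail, then applies the elementary inequality $(a+b)^{4/3}\leq C(a^{4/3}+b^{4/3})$. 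After this, $F$ is dominated by a double sum $\sum_{i,j}\mu^{2}(1+\mu|z-x_j|)^{-4}\,\sigma(z)\,\mu^{(N-2)/2}(1+\mu|z-x_i|)^{-(N-2)/2-\tau}$.

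Fix $\varepsilon\in(0,1)$ and write $\mathbb{R}^N=A_\varepsilon\cup A_\varepsilon^c$ with $A_\varepsilon:=\bigcup_jB(x_j,\varepsilon/\mu)$. On each ball $B(x_{j_0},\varepsilon/\mu)\subset A_\varepsilon$, the integrand is bounded by $C\mu^{(N+2)/2}\sigma(z)$ (we are in the peak region of $U_{x_{j_0},\mu}$, and all factors $(1+\mu|z-x_{j_0}|)^{-\alpha}$ are $O(1)$). A direct radial computation of $\int_{B(x_{j_0},\varepsilon/\mu)}\Gamma(z,y)\mu^{(N+2)/2}\sigma(z)\,dz$---distinguishing $y\in B(x_{j_0},2\varepsilon/\mu)$ (where the integral is $O(\mu^{(N-2)/2}\varepsilon^{2}\sigma(x_{j_0}))$) from $y$ outside (where it is $O(\varepsilon^N\mu^{(2-N)/2}\sigma(x_{j_0})/|y-x_{j_0}|^{N-2})$)---produces the factor $\varepsilon^{2-4\tau/(N-2)}/\mu^{2-4\tau/(N-2)}$ times the target weight, after summing over $j_0$ and transporting $\sigma(x_{j_0})$ to $\sigma(y)$ via the two-regime definition of $\sigma$. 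On $A_\varepsilon^c$ I would invoke the standard convolution bound
\[
\int_{\mathbb{R}^N}\frac{1}{|z-y|^{N-2}}\,\frac{\mu^{(N+2)/2}}{(1+\mu|z-x_j|)^{(N+2)/2+\tau+\theta}}\,dz\;\leq\;\frac{C\mu^{(N-2)/2}}{(1+\mu|y-x_j|)^{(N-2)/2+\tau+\theta}},
\]
valid since $\tau+\theta<(N-2)/2$ provided $\theta<\vartheta$. The extra $\theta$-decay in the input comes from interpolating the Newton kernel $|z-y|^{-(N-2)}=|z-y|^{-(N-2-\theta)}\cdot|z-y|^{-\theta}$ against the cutoff $\mu|z-x_j|\geq\varepsilon$, the loss being absorbed into $C_\varepsilon$. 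Reducing the double sum $\sum_{i,j}$ to a single sum using the lattice-tail bound displayed just after \eqref{eqs1.7} (which converges since $k<(N-2)/2$) yields the first term of the announced bound.

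The main obstacle is the estimate on $A_\varepsilon^c$: realizing the $\theta$-gain in the output exponent requires a careful interpolation of the Newton kernel against the $\varepsilon/\mu$-cutoff, while simultaneously controlling the coupled double sum uniformly in $i,j$ so that $C_\varepsilon$ blows up only in the first term and never multiplies the second term. A secondary technical point is the case $N=5$, where $4/(N-2)>1$ invalidates naive subadditivity and forces the more careful decomposition indicated above; and the weight $\sigma(z)$ must be handled separately in the two sub-regions $\{\mu|z-x|\leq 1\}$ and $\{\mu|z-x|>1\}$ to correctly transport it into $\sigma(y)$.
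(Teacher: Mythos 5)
Your decomposition is at the wrong scale, and this is not a cosmetic issue: it inverts the mechanism that produces each of the two terms. You cut at $A_\varepsilon=\bigcup_jB(x_j,\varepsilon/\mu)$, a union of \emph{microscopic} balls of radius $\varepsilon/\mu$; the paper cuts at the single \emph{macroscopic} ball $B_{\varepsilon^{-1}}(x)$. Inside $B_{\varepsilon^{-1}}(x)$, the lattice tails are controlled at cost $C_\varepsilon$ (all $x_j$ with $j\ge1$ are at distance $\sim L$), so one may replace $\sum_jU_{x_j,\mu}$ and $\sum_j\mu^{\frac{N-2}{2}}(1+\mu|z-x_j|)^{-\frac{N-2}{2}-\tau}$ by their $j=0$ terms up to $C_\varepsilon$. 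Then the bubble contributes the \emph{full} decay $(1+\mu|z-x|)^{-4}$, which is strictly stronger than the $(1+\mu|z-x|)^{-\frac{4\tau}{N-2}}$ that the weight alone would give; this surplus $4-\frac{4\tau}{N-2}>0$ is what, after convolving with $\Gamma$, produces the $\theta$-gain in the exponent of the first term. Outside $B_{\varepsilon^{-1}}(x)$, every $|z-x_j|\ge\varepsilon^{-1}$ (taking $L$ large), so $U_{x_j,\mu}(z)\le(\varepsilon/\mu)^{\frac{N-2}{2}-\tau}\cdot\mu^{\frac{N-2}{2}}(1+\mu|z-x_j|)^{-\frac{N-2}{2}-\tau}$; raising to the power $\tfrac{4}{N-2}$ produces exactly the small prefactor $(\varepsilon/\mu)^{2-\frac{4\tau}{N-2}}$ of the second term, and convolution then preserves the weight without any $\theta$-loss.

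Neither of these mechanisms is available at scale $\varepsilon/\mu$. On your $A_\varepsilon^c$ the bubble has not decayed at all (being at distance $\ge\varepsilon/\mu$ from a center still gives $U_{x_j,\mu}\sim\mu^{\frac{N-2}{2}}$), so there is no small prefactor and also no surplus exponent in $(1+\mu|z-x_j|)$; the ``interpolation of the Newton kernel against the $\varepsilon/\mu$-cutoff'' that you invoke cannot manufacture a $\theta$-improvement in the exponent of $(1+\mu|y-x_j|)$, because the needed extra decay must live in the $|z-x_j|$-weight, not in the kernel, and being outside a ball of radius $\varepsilon/\mu$ gives no such extra decay. Conversely, your $A_\varepsilon$ estimate gives at best $C\varepsilon^{2}\mu^{\frac{N-2}{2}}\sigma(y)$ near a peak, whereas the second target term there is of size $(\varepsilon/\mu)^{2-\frac{4\tau}{N-2}}\sigma(y)\mu^{\frac{N-2}{2}}$; since $2-\frac{4\tau}{N-2}=\frac{4\vartheta}{N-2}>0$, the target carries a genuine negative power of $\mu$ that your bound lacks, so the ratio blows up as $\mu\to\infty$. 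To repair the argument you must replace $A_\varepsilon$ by $B_{\varepsilon^{-1}}(x)$, assign the $\theta$-gain to the near (macroscopic) region where the bubble decays below the weight, and assign the $(\varepsilon/\mu)$-small prefactor to the far region where the bubble has already lost a factor $\varepsilon^\vartheta\mu^{-\vartheta}$ per center; your handling of $\sigma$ and the $N=5$ case is otherwise reasonable but is downstream of this structural fix.
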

We prove Lemma \ref{lem3.4} in Appendix A because its proof contains a lot of computations.

\begin{lemma}\label{lem3.5}
$T$  is a compact bounded linear operator from  $\mathbf{X}$  to itself.
\end{lemma}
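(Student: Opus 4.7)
The plan is to establish boundedness and compactness of $T$ as separate consequences of the weighted convolution estimate of Lemma~\ref{lem3.4}, which is the workhorse that converts pointwise bounds on the integrand in the definition \eqref{eqs3.9} of $T$ into $\|\cdot\|_*$-bounds on $T(\omega_1,\omega_2)$.

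For boundedness, I would invoke Lemma~\ref{lm3.1}, so it suffices to control the $\|\cdot\|_{**}$-norm of the argument of $(-\Delta)^{-1}$ in \eqref{eqs3.9} by $\|(\omega_1,\omega_2)\|_*$. The principal terms $K_1(y)(PU_{x,\mu})^{2^*-2}\omega_1$, $(PU_{x,\mu})^{(4-N)/(N-2)}(PV_{x,\mu})^{N/(N-2)}\omega_1$, the cross-coupling $(PU_{x,\mu})^{2/(N-2)}(PV_{x,\mu})^{2/(N-2)}\omega_2$, and their counterparts in the second component are all pointwise bounded, via Lemma~\ref{lm2.1} and the defining inequality $|\omega_l(y)|\leq\|\omega_l\|_*\sigma(y)\sum_j\mu^{(N-2)/2}/(1+\mu|y-x_j|)^{(N-2)/2+\tau}$, by a constant multiple of
$$\|(\omega_1,\omega_2)\|_*\Bigl(\sum_{j=0}^\infty U_{x_j,\mu}\Bigr)^{4/(N-2)}\sigma(y)\sum_{j=0}^\infty\frac{\mu^{(N-2)/2}}{(1+\mu|y-x_j|)^{(N-2)/2+\tau}}.$$
Since $U_{x_j,\mu}^{4/(N-2)}\sim\mu^2/(1+\mu|y-x_j|)^4$, comparing with the $\|\cdot\|_{**}$-weight shows the above quantity is a constant multiple of that weight times $\|(\omega_1,\omega_2)\|_*$. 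The projection corrections $\sum_h c_h U_{x,\mu}^{2^*-2}Y_h$ and $\sum_h c_h V_{x,\mu}^{2^*-2}Z_h$ have the same structural form once one inserts $|c_h|\leq C\mu^{-\alpha(h)-\beta}\|(\omega_1,\omega_2)\|_*$ from Lemma~\ref{lem3.3}. This yields $\|T(\omega_1,\omega_2)\|_*\leq C\|(\omega_1,\omega_2)\|_*$.

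For compactness, I would take a bounded sequence $\{(\omega_{1n},\omega_{2n})\}$ in $\mathbf{X}$, set $(u_n,v_n)=T(\omega_{1n},\omega_{2n})$, and apply Lemma~\ref{lem3.4} componentwise to obtain the dichotomous estimate
$$\frac{|u_n(y)|+|v_n(y)|}{\sigma(y)\sum_{j=0}^\infty\mu^{(N-2)/2}(1+\mu|y-x_j|)^{-(N-2)/2-\tau}}\leq C_\varepsilon\sup_{j\geq 0}\frac{1}{(1+\mu|y-x_j|)^\theta}+C\varepsilon^{2-4\tau/(N-2)}\mu^{-(2-4\tau/(N-2))},$$
uniformly in $n$ for any $\varepsilon\in(0,1)$. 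Given $\delta>0$, first fix $\varepsilon$ small so the second term is $\leq\delta/4$, then choose $R>0$ so that $C_\varepsilon(1+R)^{-\theta}\leq\delta/4$. On the set where $\mu|y-x_j|\geq R$ for all $j$, the ratio above is uniformly at most $\delta/2$, giving tightness of $\{(u_n,v_n)\}$ at infinity in $\|\cdot\|_*$. On the complement, which modulo the $LP_j$-periodicity is bounded, the representation $u_n(y)=\int_{\mathbb{R}^N}\Gamma(z,y)F_n(z)\,dz$ with $F_n$ uniformly $L^\infty_{\rm loc}$-bounded delivers uniform $C^{1,\alpha}$-bounds on $(u_n,v_n)$, so Arzelà–Ascoli with a diagonal extraction produces a subsequence converging uniformly on compact sets. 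Combining uniform convergence on the compact region with the uniform $\delta/2$ tail estimate shows the subsequence is Cauchy in $\|\cdot\|_*$, proving compactness.

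The central difficulty is precisely this tail control: because $\|\cdot\|_*$ measures a ratio between $|u(y)|$ and the weight, mere decay of $T(\omega)(y)$ at infinity is not enough—one needs the ratio itself to decay uniformly as $y$ leaves the concentration points. The extra factor $(1+\mu|y-x_j|)^{-\theta}$ delivered by the ``main'' term in Lemma~\ref{lem3.4} is exactly what supplies this decay, and it is the reason for the careful choice $\tau=\frac{N-2}{2}-\vartheta$ and for the additional weight $\sigma(y)$ in the definition \eqref{eqs1.7} of $\|\cdot\|_*$; without them the compactness step would fail.
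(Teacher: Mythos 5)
Your proposal is essentially the paper's own argument: boundedness from the Lemma~\ref{lem3.4} dichotomous estimate together with the $c_h$-bound of Lemma~\ref{lem3.3}, and compactness from a tail estimate (first $\varepsilon$ small to kill the constant term, then $R$ large to exploit the extra $\theta$-decay) combined with interior $L^p$/Schauder estimates, Arzel\`a--Ascoli, and a diagonal extraction. One small bookkeeping point: your displayed dichotomous estimate for $(u_n,v_n)=T(\omega_{1n},\omega_{2n})$ does not follow from Lemma~\ref{lem3.4} alone, because $T$ contains the projection terms $\sum_h c_h U_{x,\mu}^{2^*-2}Y_h$ and $\sum_h c_h V_{x,\mu}^{2^*-2}Z_h$; their contribution must be bounded separately, as in \eqref{eqs3.37}--\eqref{eqs3.37(1)}, by $\mu^{-\beta}\|(\omega_1,\omega_2)\|_*\sum_j \mu^{(N-2)/2}(1+\mu|y-x_j|)^{-(N-2)}$, whose ratio against the weight decays like $(1+\mu|y-x_j|)^{-((N-2)/2-\tau)}$ (using $\tau<\tfrac{N-2}{2}$ and $\sigma(y)=1$ off $B_1(x)$), so the tail argument still closes, but this third term needs to be recorded in your estimate. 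Apart from that, the structure (and the remark about why the $\sigma(y)$ weight and the choice of $\tau$ are essential for the tail control) matches the paper.
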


\begin{proof}
First, it follows from Lemma \ref{lem3.4} that for any  $(\omega_{1},\omega_{2}) \in \mathbf{X}$, it holds

\begin{align}\begin{split}\label{eqs3.36}
&\Bigl|(-\Delta)^{-1}\Bigl[K_1(y)(PU_{x, \mu})^{2^{*}-2} \omega_1-\frac{(PU_{x,\mu})^{\frac{4-N}{N-2}}(PV_{x,\mu})^{\frac{N}{N-2}}}{N+2}\omega_1-\frac{N(PU_{x,\mu})^{\frac{2}{N-2}}(PV_{x,\mu})^{\frac{2}{N-2}}}{2(N+2)}\omega_2\Bigr]\Bigr| \\
&\quad=\Bigl|\int_{\Omega} G(z, y) \Bigl[K_1(z)(PU_{x, \mu})^{2^{*}-2} \omega_1-\frac{1}{N+2}(PU_{x,\mu})^{\frac{4-N}{N-2}}(PV_{x,\mu})^{\frac{N}{N-2}}\omega_1\\
&\quad \quad -\frac{N}{2(N+2)}(PU_{x,\mu})^{\frac{2}{N-2}}(PV_{x,\mu})^{\frac{2}{N-2}}\omega_2\Bigr] dz\Bigr| \\
&\quad \leq C\|(\omega_1,\omega_2)\|_{*} \int_{\Omega} G(z, y)\Bigl(\sum_{j=0}^{\infty} U_{x_{j}, \mu}\Bigr)^{2^{*}-2} \sigma(z) \sum_{j=0}^{\infty} \frac{\mu^{\frac{N-2}{2}}}{\left(1+\mu\left|z-x_{j}\right|\right)^{\frac{N-2}{2}+\tau}} dz \\
&\quad \leq\|(\omega_1,\omega_2)\|_{*} \sigma(y)\Bigl[C_{\varepsilon} \sum_{j=0}^{\infty} \frac{\mu^{\frac{N-2}{2}}}{\left(1+\mu\left|y-x_{j}\right|\right)^{\frac{N-2}{2}+\tau+\theta}}+\frac{C \varepsilon^{2-\frac{4 \tau}{N-2}}}{\mu^{2-\frac{4 \tau}{N-2}}} \sum_{j=0}^{\infty} \frac{\mu^{\frac{N-2}{2}}}{\left(1+\mu\left|y-x_{j}\right|\right)^{\frac{N-2}{2}+\tau}}\Bigr].
\end{split}
\end{align}
Similarly, we have
\begin{align}\begin{split}\label{eqs3.36(1)}
&\Bigl|(-\Delta)^{-1}\Bigl[K_2(y)(PV_{x, \mu})^{2^{*}-2} \omega_2-\frac{(PV_{x,\mu})^{\frac{4-N}{N-2}}(PU_{x,\mu})^{\frac{N}{N-2}}}{N+2}\omega_2-\frac{N(PV_{x,\mu})^{\frac{2}{N-2}}(PU_{x,\mu})^{\frac{2}{N-2}}}{2(N+2)}\omega_1\Bigr]\Bigr| \\
&\quad \leq\|(\omega_1,\omega_2)\|_{*} \sigma(y)\Bigl[C_{\varepsilon} \sum_{j=0}^{\infty} \frac{\mu^{\frac{N-2}{2}}}{\left(1+\mu\left|y-x_{j}\right|\right)^{\frac{N-2}{2}+\tau+\theta}}+\frac{C \varepsilon^{2-\frac{4 \tau}{N-2}}}{\mu^{2-\frac{4 \tau}{N-2}}} \sum_{j=0}^{\infty} \frac{\mu^{\frac{N-2}{2}}}{\left(1+\mu\left|y-x_{j}\right|\right)^{\frac{N-2}{2}+\tau}}\Bigr].
\end{split}
\end{align}

Using Lemma 3.3, we obtain
\begin{align}\begin{split}\label{eqs3.37}
\Bigl|c_{h} \int_{\Omega} G(z, y) U_{x, \mu}^{2^{*}-2} Y_{h} dz\Bigr|
&\leq \frac{C}{\mu^{\alpha(h)+\beta}}\|(\omega_1,\omega_2)\|_{*} \mu^{\alpha(h)} \int_{\Omega} G(z, y) U_{x, \mu}^{2^{*}-1} \\
&\leq \frac{C}{\mu^{\beta}}\|(\omega_1,\omega_2)\|_{*} \sum_{j=0}^{\infty} \frac{\mu^{\frac{N-2}{2}}}{\left(1+\mu\left|y-x_{j}\right|\right)^{N-2}}
\end{split}
\end{align}
and similarly, we get
\begin{align}\label{eqs3.37(1)}
\Bigl|c_{h} \int_{\Omega} G(z, y) V_{x, \mu}^{2^{*}-2} Z_{h} dz\Bigr|
\leq \frac{C}{\mu^{\beta}}\|(\omega_1,\omega_2)\|_{*} \sum_{j=0}^{\infty} \frac{\mu^{\frac{N-2}{2}}}{\left(1+\mu\left|y-x_{j}\right|\right)^{N-2}}.
\end{align}
Combining \eqref{eqs3.36}, \eqref{eqs3.36(1)}, \eqref{eqs3.37} and \eqref{eqs3.37(1)}, we find  $\|T (\omega_{1},\omega_{2})\|_{*} \leq C\|(\omega_{1},\omega_{2})\|_{*}$. This shows that  $T$  is a bounded linear operator from  $\mathbf{X}$  to itself.

Now we prove that  $T$  is compact. Suppose that  $(\omega_{1,n},\,\omega_{2,n})$  is a bounded sequence in  $\mathbf{X}$. We denote $T (\omega_{1},\omega_{2})=(T_{1} (\omega_{1},\omega_{2}),T_{2} (\omega_{1},\omega_{2}))$. For any  $\varepsilon>0$, from \eqref{eqs3.36}-\eqref{eqs3.37(1)}, we see that there exists  $R>0$  large enough, such that
\begin{equation}\label{eqs3.38}
\Bigl(\sigma(y) \sum_{j=0}^{\infty} \frac{\mu^{\frac{N-2}{2}}}{\left(1+\mu\left|y-x_{j}\right|\right)^{\frac{N-2}{2}+\tau}}\Bigr)^{-1}\left|T_{1} (\omega_{1,n},\omega_{2,n})\right|<\varepsilon
\end{equation}
and
\begin{equation}\label{eqs3.38(1)}
\Bigl(\sigma(y) \sum_{j=0}^{\infty} \frac{\mu^{\frac{N-2}{2}}}{\left(1+\mu\left|y-x_{j}\right|\right)^{\frac{N-2}{2}+\tau}}\Bigr)^{-1}\left|T_{2} (\omega_{1,n},\omega_{2,n})\right|<\varepsilon
\end{equation}
for any $ y \in \Omega \backslash B_{R}(0)$.

On the other hand, from
\begin{align*}\begin{split}
&(-\Delta T_{1} (\omega_{1,n},\omega_{2,n}), -\Delta T_{2} (\omega_{1,n},\omega_{2,n}))\\
&\quad =(2^{*}-1)\Bigl(K_1(y)(PU_{x, \mu})^{2^{*}-2} \omega_{1}-\frac{1}{N+2}(PU_{x,\mu})^{\frac{4-N}{N-2}}(PV_{x,\mu})^{\frac{N}{N-2}}\omega_{1}\\
&\quad \quad -\frac{N}{2(N+2)}(PU_{x,\mu})^{\frac{2}{N-2}}(PV_{x,\mu})^{\frac{2}{N-2}}\omega_{2}+\sum_{h=1}^{N+1} c_{h}U_{x, \mu}^{2^{*}-2} Y_{h},\\
&\quad \quad K_2(y)(PV_{x, \mu})^{2^{*}-2} \omega_{2}-\frac{1}{N+2}(PV_{x,\mu})^{\frac{4-N}{N-2}}(PU_{x,\mu})^{\frac{N}{N-2}}\omega_{2}\\
&\quad \quad -\frac{N}{2(N+2)}(PV_{x,\mu})^{\frac{2}{N-2}}(PU_{x,\mu})^{\frac{2}{N-2}}\omega_{1}+\sum_{h=1}^{N+1} c_{h}V_{x, \mu}^{2^{*}-2} Z_{h}\Bigr),
\end{split}\end{align*}
we know from the  $L^{p}$  estimate that  $T_{1} (\omega_{1,n},\omega_{2,n})$ and $T_{2} (\omega_{1,n},\omega_{2,n})$  are bounded in  $C_{loc}^{1}(\Omega)$. As a resut,  $T (\omega_{1,n},\omega_{2,n})$  has a subsequence, still denoted by  $T (\omega_{1,n},\omega_{2,n})$ , which converges to  $(u_{0},v_{0})$  uniformly in  $B_{R}(0) \cap \Omega$  for any  $R>0$. Moreover, from  $\left\|T (\omega_{1,n},\omega_{2,n})\right\|_{*} \leq C<+\infty$, we find  $\left\|(u_{0},v_{0})\right\|_{*} \leq C$  and this gives  $(u_{0},v_{0}) \in \mathbf{X}$. Let $n \rightarrow+\infty$ in \eqref{eqs3.38} and \eqref{eqs3.38(1)}, we obtain
$$
\Bigl(\sigma(y) \sum_{j=0}^{\infty} \frac{\mu^{\frac{N-2}{2}}}{\left(1+\mu\left|y-x_{j}\right|\right)^{\frac{N-2}{2}+\tau}}\Bigr)^{-1}\left|u_{0}\right|<\varepsilon
$$
and
$$
\Bigl(\sigma(y) \sum_{j=0}^{\infty} \frac{\mu^{\frac{N-2}{2}}}{\left(1+\mu\left|y-x_{j}\right|\right)^{\frac{N-2}{2}+\tau}}\Bigr)^{-1}\left|v_{0}\right|<\varepsilon
$$
for any $y \in \Omega \backslash B_{R}(0)$ if $R>0$ is large. This, together with \eqref{eqs3.38}, \eqref{eqs3.38(1)} and  $T (\omega_{1,n},\omega_{2,n}) \rightarrow (u_{0},v_{0})$  uniformly in  $B_{R}(0) \cap \Omega$, implies that  $\left\|T (\omega_{1,n},\omega_{2,n})-(u_{0},v_{0})\right\|_{*} \rightarrow 0$. So  $T$  is compact.
\end{proof}

Now we prove that  $I-T$  is injective in  $\mathbf{E}$. For this purpose, we consider  $(\omega_1,\omega_2)-T (\omega_1,\omega_2)=(-\Delta)^{-1} (f_1,f_2)$, where  $(\omega_1,\omega_2) \in \mathbf{E}$  and  $(f_1,f_2) \in \mathbf{F}$. Equivalently, we consider the following linear problem:
\begin{align}\begin{split}\label{eqs3.39}
&-\Delta \omega_1-\left(2^{*}-1\right)\Bigl(K_1(y)(PU_{x, \mu})^{2^{*}-2} \omega_{1}-\frac{1}{N+2}(PU_{x,\mu})^{\frac{4-N}{N-2}}(PV_{x,\mu})^{\frac{N}{N-2}}\omega_{1}\\
&\quad -\frac{N}{2(N+2)}(PU_{x,\mu})^{\frac{2}{N-2}}(PV_{x,\mu})^{\frac{2}{N-2}}\omega_2\Bigr)=f_1+\sum_{h=1}^{N+1} c_{h} U_{x, \mu}^{2^{*}-2} Y_{h}\\
\end{split}
\end{align}
and
\begin{align}\begin{split}\label{eqs3.39(1)}
&-\Delta \omega_2-\left(2^{*}-1\right)\Bigl(K_2(y)(PV_{x, \mu})^{2^{*}-2} \omega_{2}-\frac{1}{N+2}(PV_{x,\mu})^{\frac{4-N}{N-2}}(PU_{x,\mu})^{\frac{N}{N-2}}\omega_{2}\\
&\quad -\frac{N}{2(N+2)}(PV_{x,\mu})^{\frac{2}{N-2}}(PU_{x,\mu})^{\frac{2}{N-2}}\omega_1\Bigr)=f_2+\sum_{h=1}^{N+1} c_{h} V_{x, \mu}^{2^{*}-2} Z_{h}\\
\end{split}
\end{align}
for some constants  $c_{h}$.

\begin{lemma}\label{lem3.6}
Suppose that  $(\omega_1,\omega_2) \in \mathbf{E}$  solves \eqref{eqs3.39} and \eqref{eqs3.39(1)} for L sufficiently large. Then  $\|(\omega_1,\omega_2)\|_{*} \leq C\|(f_1,f_2)\|_{* *}$, for some constant  $C>0$, independent of  $(x, \mu)$. In particular,  $I-T$  is injective in  $\mathbf{E}$.
\end{lemma}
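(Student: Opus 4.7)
The plan is to argue by contradiction in the standard Lyapunov--Schmidt spirit. Suppose the conclusion fails; then along a sequence $L_n\to\infty$ and admissible $(x_n,\mu_n)$ there exist $(\omega_{1,n},\omega_{2,n})\in\mathbf{E}$ solving \eqref{eqs3.39}--\eqref{eqs3.39(1)} with $\|(\omega_{1,n},\omega_{2,n})\|_{*}=1$ and $\|(f_{1,n},f_{2,n})\|_{**}\to 0$. I will then derive a pointwise bound on $|\omega_{i,n}|$ in terms of the $\|\cdot\|_{*}$-weight, show that the mass of $(\omega_{1,n},\omega_{2,n})$ must concentrate at one of the bubble centers $x_{j_n}$, and blow up there to reach a contradiction with the non-degeneracy of $(U_{x_0,\lambda},V_{x_0,\lambda})$.

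First I would control the Lagrange multipliers $c_h$. Testing \eqref{eqs3.39}--\eqref{eqs3.39(1)} against $\partial_h PU_{x,\mu}$ and $\partial_h PV_{x,\mu}$, and using the estimate \eqref{eqs3.35} (which says the leading matrix is diagonal of size $\mu^{2\alpha(h)}$), together with the bound \eqref{eqs3.13}--\eqref{eqs3.14} of Lemma \ref{lem3.3} applied to $(\omega_1,\omega_2)$ and the direct estimate $\bigl|\int_\Omega f_i\,\partial_h PU_{x,\mu}\bigr|\le C\mu^{\alpha(h)}\|(f_1,f_2)\|_{**}$ (using the decay of $\partial_h PU_{x,\mu}$ as in \eqref{eqs2.9}), I obtain
\[
|c_h|\ \le\ \frac{C}{\mu^{\alpha(h)+\beta}}\|(\omega_1,\omega_2)\|_{*}+\frac{C}{\mu^{\alpha(h)}}\|(f_1,f_2)\|_{**}.
\]
Inverting the Laplacian in \eqref{eqs3.39}--\eqref{eqs3.39(1)} and invoking Lemma \ref{lem3.4} for the linearized coupling term, together with Lemma \ref{lm3.1} for the source $(f_1,f_2)$ and the extra bound \eqref{eqs3.37}--\eqref{eqs3.37(1)} for the $c_hU_{x,\mu}^{2^*-2}Y_h$ and $c_hV_{x,\mu}^{2^*-2}Z_h$ pieces, yields, for any small $\varepsilon>0$,
\[
|\omega_{i,n}(y)|\ \le\ \Bigl(C_{\varepsilon}\,A_{\theta}(y)+C\varepsilon^{2-\frac{4\tau}{N-2}}\mu_n^{-(2-\frac{4\tau}{N-2})}A_0(y)\Bigr)\|(\omega_{1,n},\omega_{2,n})\|_{*}+C\,A_0(y)\,\|(f_{1,n},f_{2,n})\|_{**},
\]
where $A_\theta(y):=\sigma(y)\sum_{j=0}^{\infty}\mu_n^{(N-2)/2}(1+\mu_n|y-x_j|)^{-(N-2)/2-\tau-\theta}$. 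Dividing by $A_0(y)$ and taking the supremum, the coefficient of $\|(\omega_{1,n},\omega_{2,n})\|_{*}$ at points with $|y-x_{j}|\ge R/\mu_n$ for all $j$ is at most $C_{\varepsilon}R^{-\theta}+C\varepsilon^{2-\frac{4\tau}{N-2}}$, which can be made $<1/2$ by choosing $\varepsilon$ small and $R$ large. Hence, since $\|(\omega_{1,n},\omega_{2,n})\|_{*}=1$ and $\|(f_{1,n},f_{2,n})\|_{**}\to 0$, the supremum defining $\|(\omega_{1,n},\omega_{2,n})\|_{*}$ must essentially be attained on some ball $B_{R/\mu_n}(x_{j_n})$.

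By the $L$-periodicity of $\Omega$ and of the boundary conditions, I may translate so that $j_n=0$. Set
\[
\widetilde{\omega}_{i,n}(z):=\mu_n^{-(N-2)/2}\omega_{i,n}\bigl(\mu_n^{-1}z+x_n\bigr),\qquad i=1,2.
\]
The rescaled pair is uniformly bounded on compact sets of $\mathbb{R}^N$, and from \eqref{eqs3.39}--\eqref{eqs3.39(1)} with the $c_h$-estimate above together with standard elliptic $L^p$ theory it has (up to subsequence) a $C^1_{loc}(\mathbb{R}^N)$ limit $(\widetilde{\omega}_1,\widetilde{\omega}_2)$ solving the linearized limiting system
\begin{align*}
-\Delta\widetilde{\omega}_1&=(2^*-1)U_{0,1}^{2^*-2}\widetilde{\omega}_1+\tfrac{1}{N-2}U_{0,1}^{\frac{4-N}{N-2}}V_{0,1}^{\frac{N}{N-2}}\widetilde{\omega}_1+\tfrac{N}{2(N-2)}U_{0,1}^{\frac{2}{N-2}}V_{0,1}^{\frac{2}{N-2}}\widetilde{\omega}_2,\\
-\Delta\widetilde{\omega}_2&=(2^*-1)V_{0,1}^{2^*-2}\widetilde{\omega}_2+\tfrac{1}{N-2}V_{0,1}^{\frac{4-N}{N-2}}U_{0,1}^{\frac{N}{N-2}}\widetilde{\omega}_2+\tfrac{N}{2(N-2)}V_{0,1}^{\frac{2}{N-2}}U_{0,1}^{\frac{2}{N-2}}\widetilde{\omega}_1
\end{align*}
in $\mathbb{R}^N$, and satisfying $|\widetilde{\omega}_i(z)|\le C(1+|z|)^{-(N-2)/2-\tau}$, so $(\widetilde{\omega}_1,\widetilde{\omega}_2)\in D^{1,2}(\mathbb{R}^N)\times D^{1,2}(\mathbb{R}^N)$.

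The last step is to apply the non-degeneracy of $(sW_{0,1},tW_{0,1})$ from \cite{p-p-w}: the kernel of the above linearized system is spanned by $(\partial_{x_h}U_{0,1},\partial_{x_h}V_{0,1})$, $h=1,\dots,N$, and by the scaling mode $(\partial_\mu U_{0,\mu},\partial_\mu V_{0,\mu})|_{\mu=1}$. The orthogonality conditions $\int_\Omega \omega_{1,n}U_{x_n,\mu_n}^{2^*-2}Y_h=0$ and $\int_\Omega \omega_{2,n}V_{x_n,\mu_n}^{2^*-2}Z_h=0$ in the definition of $\mathbf{E}$, after rescaling and passing to the limit (using the decay rate $\tau=\frac{N-2}{2}-\vartheta$ to justify passing the integral to $\mathbb{R}^N$), yield the corresponding orthogonality relations against the kernel. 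Hence $(\widetilde{\omega}_1,\widetilde{\omega}_2)\equiv 0$, contradicting the fact that the $\|\cdot\|_{*}$ norm is attained near $x_n$ with value $1$. The main obstacle is making the concentration argument rigorous within the weighted framework, namely checking that the infinite sum in the weight is genuinely dominated by the term indexed by $j_n$ on the ball $B_{R/\mu_n}(x_{j_n})$ and that remote contributions are absorbed by the $C_\varepsilon R^{-\theta}+C\varepsilon^{2-\frac{4\tau}{N-2}}$ factor; once this localization is in hand, the blow-up and non-degeneracy finish the proof, and the bound $\|(\omega_1,\omega_2)\|_{*}\le C\|(f_1,f_2)\|_{**}$ (and injectivity of $I-T$ as the special case $(f_1,f_2)=0$) follows at once.
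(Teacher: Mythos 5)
Your overall strategy matches the paper's: derive a weighted pointwise bound on $|\omega_{i,n}|$ by inverting the Laplacian, use it to localize the $\|\cdot\|_{*}$-supremum near a bubble center, then blow up and invoke the non-degeneracy from \cite{p-p-w} together with the orthogonality built into $\mathbf{E}$. There is, however, a gap in your estimate of the multipliers $c_h$ that invalidates the key pointwise bound.

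You control $\bigl|\int_\Omega f_i\,\partial_h PU_{x,\mu}\bigr|\le C\mu^{\alpha(h)}\|(f_1,f_2)\|_{**}$ using only the pointwise decay $|\partial_h PU_{x,\mu}|\lesssim\mu^{\alpha(h)}PU_{x,\mu}$, yielding $|c_h|\lesssim \mu^{-\alpha(h)-\beta}\|(\omega_1,\omega_2)\|_*+\mu^{-\alpha(h)}\|(f_1,f_2)\|_{**}$. But the resulting contribution of $\int_\Omega G(z,y)\,c_h U_{x,\mu}^{2^*-2}Y_h\,dz$ to $\omega_i(y)$ is of size $|c_h|\mu^{\alpha(h)}\sum_j \mu^{(N-2)/2}(1+\mu|y-x_j|)^{-(N-2)/2-\tau}$ by \eqref{eqs3.43}, which carries no factor $\sigma(y)$; to read off $\|\omega_i\|_*$ one must divide by $\sigma(y)\sum_j\cdots$, losing a factor that can be as large as $\mu^{\tau-1}$ near the centers $x_j$. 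With your $c_h$-estimate, the $f$-part therefore contributes $O(\mu^{\tau-1}\|(f_1,f_2)\|_{**})$ to $\|(\omega_1,\omega_2)\|_*$, which need not be small in the contradiction argument since $\tau>1$ and no rate on $\|f_n\|_{**}\to 0$ is assumed; so the asserted $CA_0(y)\|(f_1,f_2)\|_{**}$ term does not follow and the localization step breaks down. The paper avoids this precisely by exploiting the $\sigma$-weight built into $\|\cdot\|_{**}$: in \eqref{eqs3.45}--\eqref{eqs3.52} the integral $\int_\Omega f_i\,\partial_m PU_{x,\mu}$ is shown to be $O(\|f_i\|_{**}\,\mu^{\alpha(m)}/\mu^{\tau-1})$, because $\sigma(y)\sim((1+\mu|y-x|)/\mu)^{\tau-1}$ is small near $x$ and gains exactly the $\mu^{1-\tau}$ that cancels the loss above, giving \eqref{eqs3.53}. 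With that sharper multiplier estimate in place of your crude one, the rest of your blow-up and non-degeneracy argument is correct and is essentially the paper's.
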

\begin{proof}
We write
\begin{align}\begin{split}\label{eqs3.40}
\omega_1(y)=&(2^{*}-1) \int_{\Omega} G(z, y) \Bigl[K_1(z)(PU_{x, \mu})^{2^{*}-2} \omega_1
-\frac{1}{N+2}(PU_{x,\mu})^{\frac{4-N}{N-2}}(PV_{x,\mu})^{\frac{N}{N-2}}\omega_{1}\\
&-\frac{N}{2(N+2)}(PU_{x,\mu})^{\frac{2}{N-2}}(PV_{x,\mu})^{\frac{2}{N-2}}\omega_2 dz\Bigr]+\int_{\Omega} G(z, y)\Bigl(f_1(z)+\sum_{h=1}^{N+1} c_{h} U_{x, \mu}^{2^{*}-2} Y_{h}\Bigr)dz
\end{split}
\end{align}
and
\begin{align}\begin{split}\label{eqs3.41}
\omega_2(y)= & \left(2^{*}-1\right) \int_{\Omega} G(z, y)\Bigl[K_2(z)\left(PV_{x, \mu}\right)^{2^{*}-2} \omega_2
-\frac{1}{N+2}(PV_{x,\mu})^{\frac{4-N}{N-2}}(PU_{x,\mu})^{\frac{N}{N-2}}\omega_2 \cr
&-\frac{N}{2(N+2)}(PV_{x,\mu})^{\frac{2}{N-2}}(PU_{x,\mu})^{\frac{2}{N-2}}\omega_1 dz\Bigr]
+\int_{\Omega} G(z, y)\Bigl(f_2(z)+\sum_{h=1}^{N+1} c_{h} V_{x, \mu}^{2^{*}-2} Z_{h}\Bigr)dz.
\end{split}
\end{align}
Form the proof of \eqref{eqs3.36}, we have
\begin{align}\begin{split}\label{eqs3.41(1)}
&\Bigl|\int_{\Omega} G(z, y) \Bigl[K_1(z)(PU_{x, \mu})^{2^{*}-2} \omega_1-\frac{1}{N+2}(PU_{x,\mu})^{\frac{4-N}{N-2}}(PV_{x,\mu})^{\frac{N}{N-2}}\omega_1\\
&\quad \quad -\frac{N}{2(N+2)}(PU_{x,\mu})^{\frac{2}{N-2}}(PV_{x,\mu})^{\frac{2}{N-2}}\omega_2\Bigr] dz\Bigr| \\
&\quad \leq\|(\omega_1,\omega_2)\|_{*} \sigma(y)\Bigl[C_{\varepsilon} \sum_{j=0}^{\infty} \frac{\mu^{\frac{N-2}{2}}}{\left(1+\mu\left|y-x_{j}\right|\right)^{\frac{N-2}{2}+\tau+\theta}}+\frac{C \varepsilon^{2-\frac{4 \tau}{N-2}}}{\mu^{2-\frac{4 \tau}{N-2}}} \sum_{j=0}^{\infty} \frac{\mu^{\frac{N-2}{2}}}{\left(1+\mu\left|y-x_{j}\right|\right)^{\frac{N-2}{2}+\tau}}\Bigr].
\end{split}
\end{align}
On the other hand, it is easy to see that
\begin{align}\begin{split}\label{eqs3.42}
\Bigl|\int_{\Omega} G(z, y) f_1(z) dz\Bigr|
&\leq C\|f_1\|_{* *} \int_{\Omega} G(z, y) \sigma(z) \sum_{j=0}^{\infty} \frac{\mu^{\frac{N+2}{2}}}{\left(1+\mu\left|z-x_{j}\right|\right)^{\frac{N+2}{2}+\tau}} d z \\
&=C\|f_1\|_{* *} \int_{\mathbb{R}^{N}} \frac{1}{|y-z|^{N-2}} \sigma(z) \sum_{j=0}^{\infty} \frac{\mu^{\frac{N+2}{2}}}{\left(1+\mu\left|z-x_{j}\right|\right)^{\frac{N+2}{2}+\tau}} d z \\
&\leq C\|f_1\|_{* *} \sigma(y) \sum_{j=0}^{\infty} \frac{\mu^{\frac{N-2}{2}}}{\left(1+\mu\left|y-x_{j}\right|\right)^{\frac{N-2}{2}+\tau}}
\end{split}
\end{align}
and
\begin{align}\begin{split}\label{eqs3.43}
\Bigl|\int_{\Omega} G(z, y) U_{x, \mu}^{2^{*}-2} Y_{h} dz\Bigr|
&\leq C \mu^{\alpha(h)} \int_{\Omega} G(z, y) U_{x, \mu}^{2^{*}-1} dz \cr
&\leq C \mu^{\alpha(h)} \sum_{j=0}^{\infty} \frac{\mu^{\frac{N-2}{2}}}{\left(1+\mu\left|y-x_{j}\right|\right)^{\frac{N-2}{2}+\tau}}.
\end{split}
\end{align}
Similarly, we have
\begin{equation}\label{eqs3.42(1)}
\Bigl|\int_{\Omega} G(z, y) f_2(z) dz\Bigr| \leq C\|f_2\|_{* *} \sigma(y) \sum_{j=0}^{\infty} \frac{\mu^{\frac{N-2}{2}}}{\left(1+\mu\left|y-x_{j}\right|\right)^{\frac{N-2}{2}+\tau}}
\end{equation}
and
\begin{equation}\label{eqs3.43(1)}
\Bigl|\int_{\Omega} G(z, y) V_{x, \mu}^{2^{*}-2} Z_{h} dz\Bigr|
\leq C \mu^{\alpha(h)} \sum_{j=0}^{\infty} \frac{\mu^{\frac{N-2}{2}}}{\left(1+\mu\left|y-x_{j}\right|\right)^{\frac{N-2}{2}+\tau}}.
\end{equation}

It remains to estimate  $c_{h}$. We use \eqref{eqs3.39} and \eqref{eqs3.39(1)} to find
\begin{align}\begin{split}\label{eqs3.44}
&-\sum_{h=1}^{N+1} c_{h} \int_{\Omega} U_{x, \mu}^{2^{*}-2} Y_{h} \partial_{m} P U_{x, \mu}\\
&\quad =\int_{\Omega}\left(\left(2^{*}-1\right) K_1(z)\left(P U_{x, \mu}\right)^{2^{*}-2} \omega_1-\frac{1}{N-2}(PU_{x_{L},\mu_{L}})^{\frac{4-N}{N-2}}(PV_{x,\mu})^{\frac{N}{N-2}}\omega_{1}\right.\\
&\quad \quad \left.-\frac{N}{2(N-2)}(PU_{x_{L},\mu_{L}})^{\frac{2}{N-2}}(PV_{x_{L},\mu_{L}})^{\frac{2}{N-2}}\omega_2\right)\partial_{m} PU_{x, \mu}+\int_{\Omega} f_1\partial_{m} PU_{x, \mu}
\end{split}
\end{align}
and
\begin{align}\begin{split}\label{eqs3.44(1)}
&-\sum_{h=1}^{N+1} c_{h} \int_{\Omega} V_{x, \mu}^{2^{*}-2} Z_{h} \partial_{m} P V_{x, \mu}\\
&\quad =\int_{\Omega}\left(\left(2^{*}-1\right) K_2(z)\left(P V_{x, \mu}\right)^{2^{*}-2} \omega_2-\frac{1}{N-2}(PV_{x_{L},\mu_{L}})^{\frac{4-N}{N-2}}(PU_{x,\mu})^{\frac{N}{N-2}}\omega_{2}\right.\\
&\quad \quad \left.-\frac{N}{2(N-2)}(PV_{x_{L},\mu_{L}})^{\frac{2}{N-2}}(PU_{x_{L},\mu_{L}})^{\frac{2}{N-2}}\omega_1\right)\partial_{m} PV_{x, \mu}+\int_{\Omega} f_2\partial_{m} PV_{x, \mu}.
\end{split}
\end{align}
We have
\begin{equation}\label{eqs3.45}
\Bigl|\int_{\Omega} f_1 \partial_{m} PU_{x, \mu}\Bigr| \leq C\|f_1\|_{* *} \mu^{\alpha(m)} \int_{\Omega} \sigma(y) \sum_{j=0}^{\infty} \frac{\mu^{\frac{N+2}{2}}}{\left(1+\mu\left|y-x_{j}\right|\right)^{\frac{N+2}{2}+\tau}} \sum_{j=0}^{\infty} U_{x_{j}, \mu}.
\end{equation}
Similar to \eqref{eqs3.18} and \eqref{eqs3.19}, we can prove
\begin{align}\begin{split}\label{eqs3.46}
&\int_{B_{1}(x)} \sigma(y) \sum_{j=0}^{\infty} \frac{\mu^{\frac{N+2}{2}}}{\left(1+\mu\left|y-x_{j}\right|\right)^{\frac{N+2}{2}+\tau}} \sum_{j=0}^{\infty} U_{x_{j}, \mu} \\
&\quad \leq C \mu^{N} \int_{B_{1}(x)} \frac{\sigma(y)}{(1+\mu|y-x|)^{N-2+\frac{N+2}{2}+\tau}}\\
&\quad \leq C \mu^{N} \int_{B_{1}(x)}\Bigl(\frac{1+\mu|y-x|}{\mu}\Bigr)^{\tau-1} \frac{1}{(1+\mu|y-x|)^{N-2+\frac{N+2}{2}+\tau}} \leq \frac{C}{\mu^{\tau-1}}.
\end{split}
\end{align}
On the other hand, we have
\begin{align}\begin{split}\label{eqs3.47}
&\int_{\Omega \backslash B_{1}(x)} \sigma(y) \frac{\mu^{\frac{N+2}{2}}}{(1+\mu|y-x|)^{\frac{N+2}{2}+\tau}} U_{x, \mu} \\
&\quad \leq \int_{\mathbb{R}^{N} \backslash B_{\mu}(0)} \frac{1}{(1+|y|)^{N-2+\frac{N+2}{2}+\tau}} \leq \frac{C}{\mu^{N-2-\vartheta}}.
\end{split}
\end{align}
For  $y \in \Omega$, it holds
\begin{align*}
\sum_{j=1}^{\infty} U_{x_{j}, \mu} & \leq \frac{C \mu^{\frac{N-2}{2}}}{(1+\mu|y-x|)^{2 \vartheta}} \sum_{j=1}^{\infty} \frac{1}{\left(1+\mu\left|y-x_{j}\right|\right)^{N-2-2 \vartheta}} \\
& \leq \frac{C \mu^{\frac{N-2}{2}}}{(1+\mu|y-x|)^{2 \vartheta}} \frac{1}{(\mu L)^{N-2-2 \vartheta}}
\end{align*}
and
\begin{align*}
\sum_{j=1}^{\infty} \frac{\mu^{\frac{N+2}{2}}}{\left(1+\mu\left|y-x_{j}\right|\right)^{\frac{N+2}{2}+\tau}} \leq \frac{C \mu^{\frac{N+2}{2}}}{(1+\mu|y-x|)^{2+\vartheta}} \frac{1}{(\mu L)^{\frac{N+2}{2}+\tau-2-\vartheta}}.
\end{align*}
Thus, we obtain
\begin{align}\begin{split}\label{eqs3.48}
&\int_{\Omega \backslash B_{1}(x)} \sigma(y) \frac{\mu^{\frac{N+2}{2}}}{(1+\mu|y-x|)^{\frac{N+2}{2}+\tau}} \sum_{j=1}^{\infty} U_{x_{j}, \mu} \\
&\quad \leq \int_{\mathbb{R}^{N} \backslash B_{\mu}(0)} \frac{1}{(1+|y|)^{2 \vartheta+\frac{N+2}{2}+\tau}} \frac{1}{(\mu L)^{N-2-2 \vartheta}} \leq \frac{C}{(\mu L)^{N-2-2 \vartheta}}
\end{split}
\end{align}
and
\begin{align}\begin{split}\label{eqs3.49}
&\int_{\Omega \backslash B_{1}(x)} \sigma(y) \sum_{j=1}^{\infty} \frac{\mu^{\frac{N+2}{2}}}{\left(1+\mu\left|y-x_{j}\right|\right)^{\frac{N+2}{2}+\tau}} U_{x, \mu} \\
&\quad \leq \int_{\mathbb{R}^{N} \backslash B_{\mu}(0)} \frac{1}{(1+|y|)^{N+\vartheta}} \frac{1}{(\mu L)^{\frac{N+2}{2}+\tau-2-\vartheta}} \leq \frac{C}{(\mu L)^{\frac{N+2}{2}+\tau-2-\vartheta}}.
\end{split}
\end{align}
Moreover, there holds
\begin{align}\begin{split}\label{eqs3.50}
&\int_{\Omega \backslash B_{1}(x)} \sigma(y) \sum_{j=1}^{\infty} \frac{\mu^{\frac{N+2}{2}}}{(1+\mu|y-x_j|)^{\frac{N+2}{2}+\tau}} \sum_{j=1}^{\infty} U_{x_{j}, \mu}\\
&\quad \leq C \mu^{N} \int_{\Omega \backslash B_{1}(x)} \frac{1}{(1+\mu|y-x|)^{\frac{N+2}{2}}} \frac{1}{(\mu L)^{\tau}} \frac{1}{(1+\mu|y-x|)^{\frac{N-2}{2}+\vartheta}} \frac{1}{(\mu L)^{\frac{N-2}{2}-\vartheta}} \\
&\quad \leq C \int_{\mathbb{R}^{N} \backslash B_{\mu}(0)} \frac{1}{(1+|y|)^{N+\vartheta}} \frac{1}{(\mu L)^{N-2-2 \vartheta}} \leq \frac{C}{(\mu L)^{N-2-2 \vartheta}}.
\end{split}
\end{align}
Combining \eqref{eqs3.47}-\eqref{eqs3.50}, we obtain
\begin{equation}\label{eqs3.51}
\int_{\Omega \backslash B_{1}(x)} \sigma(y) \sum_{j=0}^{\infty} \frac{\mu^{\frac{N+2}{2}}}{\left(1+\mu\left|y-x_{j}\right|\right)^{\frac{N+2}{2}+\tau}} \sum_{j=0}^{\infty} U_{x_{j}, \mu} \leq \frac{C}{\mu^{N-2-\vartheta}}.
\end{equation}
It follows from \eqref{eqs3.45}, \eqref{eqs3.46} and \eqref{eqs3.51} that
\begin{equation}\label{eqs3.52}
\Bigl|\int_{\Omega} f_1 \partial_{m} PU_{x, \mu}\Bigr| \leq C\frac{\|f_1\|_{* *} \mu^{\alpha(m)}}{\mu^{\tau-1}}.
\end{equation}
Similarly, we also have
\begin{equation}\label{eqs3.52(1)}
\Bigl|\int_{\Omega} f_2 \partial_{m} PV_{x, \mu}\Bigr| \leq C\frac{\|f_2\|_{* *} \mu^{\alpha(m)}}{\mu^{\tau-1}}.
\end{equation}
Using \eqref{eqs3.13}, \eqref{eqs3.35}, \eqref{eqs3.44}, \eqref{eqs3.44(1)}, \eqref{eqs3.52} and \eqref{eqs3.52(1)}, we see
\begin{equation}\label{eqs3.53}
\left|c_{h}\right| \leq C\left(\|(f_1,f_2)\|_{* *}+o(1)\|(\omega_1,\omega_2)\|_{*}\right) \frac{\mu^{-\alpha(h)}}{\mu^{\tau-1}},
\end{equation}
which gives
\begin{align}\begin{split}\label{eqs3.54}
&\Bigl|\int_{\Omega} G(z, y) \sum_{h=1}^{N+1} c_{h} U_{x, \mu}^{2^{*}-2} Y_{h} d z\Bigr|\\
&\quad \leq C\left(\|(f_1,f_2)\|_{* *}+o(1)\|(\omega_1,\omega_2)\|_{*}\right) \frac{1}{\mu^{\tau-1}} \sum_{j=0}^{\infty} \frac{\mu^{\frac{N-2}{2}}}{\left(1+\mu\left|y-x_{j}\right|\right)^{\frac{N-2}{2}+\tau}} \\
&\quad \leq C\left(\|(f_1,f_2)\|_{* *}+o(1)\|(\omega_1,\omega_2)\|_{*}\right) \sigma(y) \sum_{j=0}^{\infty} \frac{\mu^{\frac{N-2}{2}}}{\left(1+\mu\left|y-x_{j}\right|\right)^{\frac{N-2}{2}+\tau}}
\end{split}
\end{align}
and
\begin{align}\begin{split}\label{eqs3.54(1)}
&\Bigl|\int_{\Omega} G(z, y) \sum_{h=1}^{N+1} c_{h} V_{x, \mu}^{2^{*}-2} Z_{h} d z\Bigr|\\
&\quad \leq C\left(\|(f_1,f_2)\|_{* *}+o(1)\|(\omega_1,\omega_2)\|_{*}\right) \sigma(y) \sum_{j=0}^{\infty} \frac{\mu^{\frac{N-2}{2}}}{\left(1+\mu\left|y-x_{j}\right|\right)^{\frac{N-2}{2}+\tau}}.
\end{split}
\end{align}
Combining \eqref{eqs3.36(1)}, \eqref{eqs3.40}-\eqref{eqs3.42}, \eqref{eqs3.42(1)}, \eqref{eqs3.54} and \eqref{eqs3.54(1)}, we are led to
\begin{align}\begin{split}\label{eqs3.55}
&|\omega_1(y)|\Bigl(\sigma(y) \sum_{j=0}^{\infty} \frac{\mu^{\frac{N-2}{2}}}{\left(1+\mu\left|y-x_{j}\right|\right)^{\frac{N-2}{2}+\tau}}\Bigr)^{-1} \\
&\quad \leq C\Bigl(\|(f_1,f_2)\|_{* *}+o(1)\|(\omega_1,\omega_2)\|_{*}+\frac{\sum_{j=0}^{\infty} \frac{1}{\left(1+\mu\left|y-x_{j}\right|\right)^{\frac{N-2}{2}+\tau+\bar{\theta}}}}{\sum_{j=0}^{\infty} \frac{1}{\left(1+\mu \mid y-x_{j}\right)^{\frac{N-2}{2}+\tau}}}\|(\omega_1,\omega_2)\|_{*}\Bigr)
\end{split}
\end{align}
and
\begin{align}\begin{split}\label{eqs3.55(1)}
&|\omega_2(y)|\Bigl(\sigma(y) \sum_{j=0}^{\infty} \frac{\mu^{\frac{N-2}{2}}}{\left(1+\mu\left|y-x_{j}\right|\right)^{\frac{N-2}{2}+\tau}}\Bigr)^{-1} \\
&\quad \leq C\Bigl(\|(f_1,f_2)\|_{* *}+o(1)\|(\omega_1,\omega_2)\|_{*}+\frac{\sum_{j=0}^{\infty} \frac{1}{\left(1+\mu\left|y-x_{j}\right|\right)^{\frac{N-2}{2}+\tau+\bar{\theta}}}}{\sum_{j=0}^{\infty} \frac{1}{\left(1+\mu \mid y-x_{j}\right)^{\frac{N-2}{2}+\tau}}}\|(\omega_1,\omega_2)\|_{*}\Bigr).
\end{split}
\end{align}
Thus we have proved $\|(\omega_1,\omega_2)\|_{*} \leq C\|(f_1,f_2)\|_{* *}$.

Now we prove $I-T$ is injective in $\mathbf{E}$. Suppose that there are  $L_{n} \rightarrow+\infty$, $x_{n} \rightarrow 0$, $\mu_{n} \rightarrow+\infty$, $f_{n}=(f_{1,n},f_{2,n}) \in \mathbf{F}$  and  $\omega_{n}=(\omega_{1,n},\omega_{2,n}) \in \mathbf{E}$, such that  $\omega_{n}$  solves \eqref{eqs3.39} and \eqref{eqs3.39(1)} with  $\left\|\omega_{n}\right\|_{*}=1$ and  $\left\|f_{n}\right\|_{* *} \rightarrow 0$  as  $n \rightarrow+\infty$. Denote  $x_{n, j}=x_{n}-L_{n} P_{j}$.

From \eqref{eqs3.55} and \eqref{eqs3.55(1)}, we see that the maximum of
$$\left|\omega_{1,n}(y)\right|\Bigl(\sigma(y) \sum_{j=0}^{\infty} \frac{\mu_{n}^{\frac{N-2}{2}}}{(1+\mu_{n}\left|y-x_{n, j}\right|)^{\frac{N-2}{2}+\tau}}\Bigr)^{-1}$$
and
$$\left|\omega_{2,n}(y)\right|\Bigl(\sigma(y) \sum_{j=0}^{\infty} \frac{\mu_{n}^{\frac{N-2}{2}}}{(1+\mu_{n}\left|y-x_{n, j}\right|)^{\frac{N-2}{2}+\tau}}\Bigr)^{-1}$$
in $\Omega$  can only be achieved in  $B_{R \mu_{n}^{-1}}(0)$  for some large  $R>0$. On the other hand,  we see that $(\mu_{n}^{-\frac{N-2}{2}} \omega_{1,n}\left(\mu_{n}^{-1} y+x_{n}\right), \mu_{n}^{-\frac{N-2}{2}} \omega_{2,n}\left(\mu_{n}^{-1} y+x_{n}\right))$  converges to  $(\Psi,\Phi)$  uniformly in  $B_{R}(0)$  for any  $R>0$, and  $(\Psi,\Phi)$  satisfies
\begin{eqnarray*}\begin{cases}
-\Delta \Psi=\left(2^{*}-1\right) U_{0,1}^{2^{*}-2} \Psi+\frac{1}{2}(\frac{2^*}{2}-1)U_{0,1}^{\frac{2^*}{2}-2}V_{0,1}^{\frac{2^*}{2}}\Psi+\frac{1}{2}\frac{2^*}{2}U_{0,1}^{\frac{2^*}{2}-1}V_{0,1}^{\frac{2^*}{2}-1}\Phi, \quad \text { in } \mathbb{R}^{N},\\
-\Delta \Phi=\left(2^{*}-1\right) V_{0,1}^{2^{*}-2} \Phi+\frac{1}{2}(\frac{2^*}{2}-1)V_{0,1}^{\frac{2^*}{2}-2}U_{0,1}^{\frac{2^*}{2}}\Phi+\frac{1}{2}\frac{2^*}{2}V_{0,1}^{\frac{2^*}{2}-1}U_{0,1}^{\frac{2^*}{2}-1}\Psi, \quad \text { in } \mathbb{R}^{N}.
\end{cases}
\end{eqnarray*}
Since  $\omega_{n} \in \mathbf{E}$, we conclude  $(\Psi,\Phi)=0$. This is a contradiction to  $\left\|(\omega_{1n},\omega_{2n})\right\|_{*}=1$.
\end{proof}

\begin{proposition}\label{pro3.6}
$I-T$  is a bijective bounded linear operator from $\mathbf{E}$ to itself.
\end{proposition}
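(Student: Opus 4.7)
The plan is to deduce the proposition from the Riesz--Fredholm alternative, feeding in the compactness of $T$ from Lemma \ref{lem3.5} and the injectivity/a priori estimate from Lemma \ref{lem3.6}. The structure has three steps: (i) $\mathbf{E}$ is a closed subspace of $\mathbf{X}$, hence a Banach space in its own right; (ii) $T$ leaves $\mathbf{E}$ invariant; (iii) the Fredholm alternative applied to the compact operator $T|_\mathbf{E}$ yields surjectivity from injectivity, and boundedness of the inverse follows either from the open mapping theorem or directly from the bound in Lemma \ref{lem3.6}.

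For step (i) I would observe that for each $h=1,\ldots,N+1$ the linear functional $u\mapsto \int_{\Omega} u\,U_{x,\mu}^{2^*-2} Y_h$ is bounded on $(\mathbf{X},\|\cdot\|_*)$: for $\|u\|_*\leq 1$ the integrand is controlled by $\sigma(y)\sum_j\frac{\mu^{(N-2)/2}}{(1+\mu|y-x_j|)^{(N-2)/2+\tau}}\cdot U_{x,\mu}^{2^*-2}|Y_h|$, whose integrability is a direct consequence of the computations already carried out for $J_4$ in the proof of Lemma \ref{lem3.3}; the $v$-functionals involving $V_{x,\mu}^{2^*-2} Z_h$ are handled identically. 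Therefore $\mathbf{E}$ is the intersection of kernels of bounded linear functionals, hence closed in $\mathbf{X}$.

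For step (ii) I would invoke the characterization derived right after the definition of $\mathbf{F}$, namely that the identities
\[
\int_{\Omega} f\, \partial_m PU_{x,\mu} = C\int_{\Omega} u\, U_{x,\mu}^{2^*-2} Y_m,\qquad \int_{\Omega} g\, \partial_m PV_{x,\mu} = C\int_{\Omega} v\, V_{x,\mu}^{2^*-2} Z_m
\]
give the equivalence $(f,g)\in\mathbf{F} \Longleftrightarrow (-\Delta)^{-1}(f,g)\in\mathbf{E}$. Since by construction $\mathbf{P}$ maps $\mathbf{Y}$ into $\mathbf{F}$, we conclude $T=(-\Delta)^{-1}\mathbf{P}(\cdots)$ always takes values in $\mathbf{E}$; in particular $T|_\mathbf{E}\colon \mathbf{E}\to\mathbf{E}$ is well defined, and it inherits boundedness and compactness from Lemma \ref{lem3.5} (the restriction of a compact operator to a closed invariant subspace is compact).

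For step (iii), with $T|_\mathbf{E}$ compact on the Banach space $\mathbf{E}$, the Fredholm alternative guarantees that $I-T|_\mathbf{E}$ is bijective precisely when it is injective. Injectivity for $L$ sufficiently large is Lemma \ref{lem3.6}, which also supplies the explicit bound $\|(\omega_1,\omega_2)\|_* \leq C\|(f_1,f_2)\|_{**}$; this bound (or alternatively the open mapping theorem) shows $(I-T)^{-1}$ is bounded. The only conceptual hurdle I expect is the verification at step (ii) that the orthogonality constraints defining $\mathbf{E}$ are continuous in the weighted norm and that $T$ respects them; once this bookkeeping is done, the conclusion is a routine application of the Fredholm theory together with the preceding two lemmas.
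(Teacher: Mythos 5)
Your proposal is correct and follows essentially the same route as the paper: the paper's proof is a one-line invocation of the Fredholm alternative, combining the compactness of $T$ from Lemma \ref{lem3.5} with the injectivity from Lemma \ref{lem3.6}. Your steps (i) and (ii)---checking that $\mathbf{E}$ is closed and that $\mathbf{P}$ forces $T$ to map $\mathbf{E}$ into itself---are left implicit in the paper but are exactly the right bookkeeping to make that invocation legitimate.
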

\begin{proof}
 Since  $T$  is a compact bounded linear operator and  $I-T$  is injective, the Fredholm theorem implies that  $I-T$  is surjective.
\end{proof}

\section{existence of periodic solutions}
In this section, we will prove the main theorem. We will first solve the following problem
\begin{equation}\label{eqs4.1}
(\omega_{1},\omega_{2})-T (\omega_{1},\omega_{2})=(-\Delta)^{-1} (\mathbf{P}\ell+\mathbf{P} N(\omega_{1},\omega_{2})).
\end{equation}
For this purpose, we first give the estimates of $N(\omega_{1},\omega_{2})$ and $\ell$.

\begin{lemma}\label{lem4.2}
If  $N \geq 5$, then
$$\|N(\omega_1,\omega_2)\|_{* *} \leq C\|(\omega_1,\omega_2)\|_{*}^{(1+\delta)}.$$
\end{lemma}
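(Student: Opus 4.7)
The plan is to split $N(\omega_1,\omega_2)$ componentwise into a \emph{scalar Taylor remainder} coming from the pure power $K_l(PU_{x,\mu})^{2^*-1}$ (and its $V$ analogue) plus an \emph{off-diagonal Taylor remainder} coming from the coupling term $\tfrac{1}{2}(PU_{x,\mu})^{2^*/2-1}(PV_{x,\mu})^{2^*/2}$. Writing $a=PU_{x,\mu}$, $c=PV_{x,\mu}$, $b=\omega_1$, $d=\omega_2$, so that for instance
\[
N_1 \;=\; K_1\bigl((a+b)_+^{p}-a^{p}-pa^{p-1}b\bigr)
\;+\;\tfrac{1}{2}\bigl(G(a+b,c+d)-G(a,c)-G_s(a,c)b-G_t(a,c)d\bigr),
\]
with $p=2^*-1$ and $G(s,t)=s^{2/(N-2)}t^{N/(N-2)}$ homogeneous of degree $p$; an analogous decomposition handles $N_2$.

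First I would apply the standard elementary pointwise inequalities
\[
\bigl|(a+b)_+^{p}-a^{p}-pa^{p-1}b\bigr|\le C\bigl(a^{(p-2)_+}\,b^2+|b|^{p}\bigr)
\]
and, by Taylor's theorem applied to $G$ along the segment from $(a,c)$ to $(a+b,c+d)$ together with the homogeneity of $G$,
\[
\bigl|G(a+b,c+d)-G(a,c)-G_s(a,c)b-G_t(a,c)d\bigr|
\;\le\; C\bigl((a+c)^{(p-2)_+}(b^2+d^2)+|b|^{p}+|d|^{p}\bigr),
\]
distinguishing $N\ge 6$ (where $p\le 2$ and only the $|b|^{p},|d|^{p}$ terms appear) from $N=5$ (where $p>2$ and the quadratic terms survive). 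The cross-term $G_{st}(a,c)\,bd$ is absorbed via $2|bd|\le b^2+d^2$, and the negative exponents in $G_{ss}$ and the coefficient $(PU)^{(4-N)/(N-2)}$ appearing in the linearization are controlled by keeping the powers of $(a+c)$ together after the bound $a,c\le C\sum_j U_{x_j,\mu}$ given by Lemma~\ref{lm2.1}.

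Next, I would insert the definition of $\|\cdot\|_*$, namely
\[
|\omega_l(y)|\;\le\;\|\omega_l\|_{*}\,\sigma(y)\sum_{j=0}^{\infty}\frac{\mu^{(N-2)/2}}{(1+\mu|y-x_j|)^{(N-2)/2+\tau}},
\]
and the bound $a+c\le C\sum_j U_{x_j,\mu}$, and then divide by the $\|\cdot\|_{**}$ weight $\sigma(y)\sum_j \mu^{(N+2)/2}(1+\mu|y-x_j|)^{-(N+2)/2-\tau}$. The whole estimate reduces to the pointwise inequality
\[
\Bigl(\sum_{j}U_{x_j,\mu}\Bigr)^{p-1-\delta}\Bigl(\sigma(y)\sum_{j}\frac{\mu^{(N-2)/2}}{(1+\mu|y-x_j|)^{(N-2)/2+\tau}}\Bigr)^{1+\delta}
\;\le\; C\,\sigma(y)\sum_{j}\frac{\mu^{(N+2)/2}}{(1+\mu|y-x_j|)^{(N+2)/2+\tau}},
\]
which holds uniformly in $y\in\Omega$ for a sufficiently small $\delta>0$ (for $N\ge 6$ one may take $\delta=p-1=\tfrac{4}{N-2}$ when the Taylor remainder is bounded by $|b|^{p}+|d|^{p}$ alone; for $N=5$ the quadratic part forces a slightly smaller $\delta$, but any $\delta\in(0,\min(1,p-1))$ suffices). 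The matching of exponents is the same book-keeping already performed inside Lemma~\ref{lem3.4}, whose proof in Appendix~A provides exactly the kernel sums required.

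The main obstacle is the coupling term: unlike the scalar remainder, the mixed and pure second derivatives of $G$ contain the negative power $s^{(4-N)/(N-2)}$, so one cannot directly localize the estimate to the region where $|b|\ll a$. I would handle this by splitting the domain into $\{|\omega_l|\le\tfrac12(a+c)\}$, where a genuine Taylor expansion with remainder controlled by $(a+c)^{p-2}(b^2+d^2)$ is legitimate, and its complement, where $a+c\le 2(|\omega_1|+|\omega_2|)$ so that every term $a^{\alpha}c^{\beta}|b|^j|d|^k$ appearing in $N_1$ is dominated by $C(|\omega_1|+|\omega_2|)^{p}$. After this dichotomy, the same weighted pointwise inequality above finishes the proof, giving $\|N(\omega_1,\omega_2)\|_{**}\le C\|(\omega_1,\omega_2)\|_{*}^{1+\delta}$.
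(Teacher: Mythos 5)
Your overall architecture matches the paper's: write $N_l$ as a scalar Taylor remainder from $K_l(P U_{x,\mu})^{2^*-1}$ plus a Taylor remainder from the coupling term, bound each pointwise, and then convert to a $\|\cdot\|_{**}$ bound via the weighted kernel estimate $\bigl(\sum_j(1+\mu|y-x_j|)^{-\tau}\bigr)\le C$ and the Hölder step of \eqref{eqs4.3}, using $\sigma^{1+\delta}\le\sigma$. Where you genuinely depart from the paper is in how the coupling remainder is handled. The paper splits it into the three pieces $I,II,III$ of \eqref{eqs4.4}, and then invokes the a priori inequality $\max\{|\omega_1/PU_{x,\mu}|,|\omega_2/PV_{x,\mu}|\}<\tfrac12$ (quoted from Lemma~B.5 of \cite{guo-wang-wang}) so that the one-variable bound $(1+t)^{2/(N-2)}-1-\tfrac{2}{N-2}t=O(t^2)$ (or $O(t^{1+\delta})$ for $N\ge6$) can be applied; that restricts the lemma implicitly to a small ball in $\mathbf X$. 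You instead propose a self-contained dichotomy of the domain, which has the advantage of not relying on an external a priori estimate, and in fact for $N\ge 6$ one can dispense with the split entirely because the bound $|H|\le C(|b|^p+|d|^p)$ holds globally once the proportionality is used.

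Two small but real imprecisions, both fixable once one states the proportionality $PV_{x,\mu}=\kappa\,PU_{x,\mu}$ explicitly (it is what makes the coupling term tractable, as the paper notes above Lemma~\ref{lm3.1} and uses in \eqref{eqs4.4}).
First, the ``good'' region $\{|\omega_l|\le\tfrac12(a+c)\}$ does not keep the Taylor segment away from the axis $s=0$: if $c>a$ then $|b|\le\tfrac12(a+c)$ allows $|b|>a$, so $a+tb$ can change sign and $G_{ss}(s,\cdot)=\alpha(\alpha-1)s^{\alpha-2}(\cdot)^{\gamma}$ is not integrable along the segment. The split should be at $\tfrac12\min(a,c)$ (equivalently, at a fixed fraction of $a$, since $c=\kappa a$).
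Second, the phrase ``keeping the powers of $(a+c)$ together after the bound $a,c\le C\sum_j U_{x_j,\mu}$'' cannot by itself control $a^{\alpha-1}c^{\gamma}$: an upper bound on $a$ is the wrong direction for the negative power $a^{\alpha-1}$, and $a^{\alpha-1}c^{\gamma}\le C(a+c)^{p-1}$ is false when $c\gg a$. What saves it here is $c=\kappa a$, so $a^{\alpha-1}c^{\gamma}=\kappa^{\gamma}a^{p-1}$, a positive power. With that stated, your pointwise inequality matching the $\|\cdot\|_*$ and $\|\cdot\|_{**}$ weights is exactly \eqref{eqs4.3}, the complement-region bound by $(|\omega_1|+|\omega_2|)^{p}$ goes through, and the argument closes.
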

\begin{proof}
Note that
\begin{equation*}\begin{split}
N(\omega_{1},\omega_{2})&=(N_1(\omega_{1},\omega_{2}),N_2(\omega_{1},\omega_{2}))\\
&:=(N_{1,1}(\omega_{1},\omega_{2})+N_{1,2}(\omega_{1},\omega_{2}),N_{2,1}(\omega_{1},\omega_{2})+N_{2,2}(\omega_{1},\omega_{2})),
\end{split}
\end{equation*}
where
\begin{align*}
N_{1,1}(\omega_{1},\omega_{2})=K_1(y)\Bigl((PU_{x, \mu}+\omega_{1})_{+}^{2^{*}-1}-(PU_{x, \mu})^{2^{*}-1}-(2^{*}-1)(PU_{x, \mu})^{2^{*}-2}\omega_{1}\Bigr),
\end{align*}
\begin{equation*}\begin{split}
N_{1,2}(\omega_{1},\omega_{2})&=\frac{1}{2}\Bigl((PU_{x,\mu}+\omega_{1})^{\frac{2}{N-2}}(PV_{x,\mu}+\omega_{2})^{\frac{N}{N-2}}-(PU_{x,\mu})^{\frac{2}{N-2}}(PV_{x,\mu})^{\frac{N}{N-2}}\Bigr)\\
&\quad -\frac{1}{N-2}(PU_{x,\mu})^{\frac{4-N}{N-2}}(PV_{x,\mu})^{\frac{N}{N-2}}\omega_{1}-\frac{N}{2(N-2)}(PU_{x,\mu})^{\frac{2}{N-2}}(PV_{x,\mu})^{\frac{2}{N-2}}\omega_{2},
\end{split}
\end{equation*}
\begin{equation*}\begin{split}
N_{2,1}(\omega_{1},\omega_{2})=K_2(y)\Bigl((PV_{x,\mu}+\omega_{2})_{+}^{2^{*}-1}-(PV_{x, \mu})^{2^{*}-1}-(2^{*}-1)(PV_{x, \mu})^{2^{*}-2}\omega_{2}\Bigr)
\end{split}
\end{equation*}
and
\begin{equation*}\begin{split}
N_{2,2}(\omega_{1},\omega_{2})&=\frac{1}{2}\Bigl((PV_{x,\mu}+\omega_{2})^{\frac{2}{N-2}}(PU_{x,\mu}+\omega_{1})^{\frac{N}{N-2}}-(PV_{x,\mu})^{\frac{2}{N-2}}(PU_{x,\mu})^{\frac{N}{N-2}}\Bigr)\\
&\quad -\frac{1}{N-2}(PV_{x,\mu})^{\frac{4-N}{N-2}}(PU_{x,\mu})^{\frac{N}{N-2}}\omega_{2}-\frac{N}{2(N-2)}(PV_{x,\mu})^{\frac{2}{N-2}}(PU_{x,\mu})^{\frac{2}{N-2}}\omega_{1}.
\end{split}
\end{equation*}

First, for $N_{1,1}$, we have
$$
|N_{1,1}(\omega_1,\omega_2)| \leq\left\{\begin{aligned}
&C|\omega_1|^{2^{*}-1}, & N \geq 6; \\
&C\left(P U_{x, \mu}\right)^{2^{*}-3} \omega_1^{2}+C|\omega_1|^{2^{*}-1}, & N = 5.
\end{aligned}\right.
$$

Since  $\tau<\frac{N-2}{2}$, we find from Lemma 2.1 that for  $N=5$,
$$\left(P U_{x, \mu}\right)^{2^{*}-3} \leq C\Bigl(\sum_{j=0}^{\infty} \frac{\mu^{\frac{N-2}{2}}}{\left(1+\mu\left|z-x_{j}\right|\right)^{\frac{N-2}{2}+\tau}}\Bigr)^{2^{*}-3}.$$
As a result, there holds
$$|N_{1,1}(\omega_1,\omega_2)| \leq C \sigma(z)\Bigl(\sum_{j=0}^{\infty} \frac{\mu^{\frac{N-2}{2}}}{\left(1+\mu\left|z-x_{j}\right|\right)^{\frac{N-2}{2}+\tau}}\Bigr)^{2^{*}-1}\|(\omega_1,\omega_2)\|_{*}^{\min \left(2^{*}-1,2\right)}.$$
On the other hand, from  $\tau>k$, we can check that for  $y \in \Omega$,
$$\sum_{j=0}^{\infty} \frac{1}{\left(1+\mu\left|y-x_{j}\right|\right)^{\tau}} \leq C.$$
So we have the following inequality
\begin{align}\begin{split}\label{eqs4.3}
\Bigl(\sum_{j=0}^{\infty} \frac{\mu^{\frac{N-2}{2}}}{\left(1+\mu\left|y-x_{j}\right|\right)^{\frac{N-2}{2}+\tau}}\Bigr)^{2^{*}-1}
&\leq C \sum_{j=0}^{\infty} \frac{\mu^{\frac{N+2}{2}}}{\left(1+\mu\left|y-x_{j}\right|\right)^{\frac{N+2}{2}+\tau}}\Bigl(\sum_{j=0}^{\infty} \frac{1}{\left(1+\mu\left|y-x_{j}\right|\right)^{\tau}}\Bigr)^{\frac{4}{N-2}} \\
&\leq C \sum_{j=0}^{\infty} \frac{\mu^{\frac{N+2}{2}}}{\left(1+\mu\left|y-x_{j}\right|\right)^{\frac{N+2}{2}+\tau}}.
\end{split}
\end{align}
Then we get
\begin{equation}\label{eqs4.3(1)}
\|N_{1,1}(\omega_1,\omega_2)\|_{**}\leq C\|(\omega_1,\omega_2)\|_{*}^{\min\{2^*-1,2\}}.
\end{equation}
 Rewrite
\begin{align}\begin{split}\label{eqs4.4}
N_{1,2}=&\frac{1}{2}(PU_{x,\mu}+\omega_1)^{\frac{2}{N-2}}\Big((PV_{x,\mu}+\omega_2)^{\frac{N}{N-2}}-PV_{x,\mu}^{\frac{N}{N-2}}-\frac N{N-2}PV_{x,\mu}^{\frac{2}{N-2}}\omega_{2}\Big)\\
&+\frac{1}{2}\Big((PU_{x,\mu}+\omega_1)^{\frac{2}{N-2}}-PU_{x,\mu}^{\frac{2}{N-2}}-\frac 2{N-2} PU_{x,\mu}^{\frac{4-N}{N-2}}\omega_{1}
\Big)PV_{x,\mu}^{\frac{N}{N-2}}\\
&+\frac N{2(N-2)}\Big( (PU_{x,\mu}+\omega_1)^{\frac{2}{N-2}}-PU_{x,\mu}^{\frac{2}{N-2}}\Big)PV_{x,\mu}^{\frac{2}{N-2}}\omega_{2}\\
:=&I+II+III.
\end{split}
\end{align}
First for $I$, we have
\begin{align*}
&|I|\leq C(|\omega_{2}|^{\frac{N}{N-2}}PU_{x,\mu}^{\frac{2}{N-2}}+|\omega_{2}|^{\frac{N}{N-2}}|\omega_1|^{\frac{2}{N-2}}):=I_1+I_2.
\end{align*}
We estimate by H\"older inequalities,
\begin{align*}
I_1&\leq C\|\omega_{2}\|_*^{\frac{N}{N-2}}\Big(\sigma(y)\sum_{j=0}^\infty\frac{\mu^{\frac{N-2}{2}}}{(1+\mu|y-x_j|)^{\frac{N-2}{2}+\tau}}\Big)^{\frac N{N-2}}
\Big(\sum_{j=0}^\infty\frac{\mu^{\frac{N-2}{2}}}{(1+\mu|y-x_j|)^{N-2}}\Big)^{\frac{2}{N-2}}\cr
&\leq C \|\omega_{2}\|_*^{\frac{N}{N-2}}\sigma(y)\Big(\sum_{j=0}^\infty\frac{\mu^{\frac{N-2}{2}}}{(1+\mu|y-x_j|)^{\frac{N-2}{2}+\tau}}\Big)^{2^*-1}\cr
&\leq C\|\omega_2\|_*^{\frac{N}{N-2}}\sigma(y)\sum_{j=0}^\infty\frac{\mu^{\frac{N+2}{2}}}{(1+\mu|y-x_j|)^{\frac{N+2}{2}+\tau}}\Big(\sum_{j=0}^\infty\frac{1}{(1+\mu|y-x_j|)^{\tau}}\Big)^{\frac4{N-2}}\cr
&\leq C\|\omega_2\|_*^{\frac{N}{N-2}}\sigma(y)\sum_{j=0}^\infty\frac{\mu^{\frac{N+2}{2}}}{(1+\mu|y-x_j|)^{\frac{N+2}{2}+\tau}}.
\end{align*}

Moreover, we have
\begin{align*}
I_2&\leq C\|\omega_{2}\|_*^{\frac{N}{N-2}}\|\omega_{1}\|_*^{\frac{2}{N-2}}\Big(\sigma(y)\sum_{j=0}^\infty\frac{\mu^{\frac{N-2}{2}}}{(1+\mu|y-x_j|)^{\frac{N-2}{2}+\tau}}\Big)^{\frac {N+2}{N-2}}\\
&\leq C\|\omega_{2}\|_*^{\frac{N}{N-2}}\|\omega_{1}\|_*^{\frac{2}{N-2}}\sigma(y)\sum_{j=0}^\infty\frac{\mu^{\frac{N+2}{2}}}{(1+\mu|y-x_j|)^{\frac{N+2}{2}+\tau}}
\Big(\sum_{j=0}^\infty\frac{1}{(1+\mu|y-x_j|)^{\tau}}\Big)^{\frac {4}{N-2}}\\
&\leq C\|\omega_{2}\|_*^{\frac{N}{N-2}}\|\omega_{1}\|_*^{\frac{2}{N-2}}\sigma(y)\sum_{j=0}^\infty\frac{\mu^{\frac{N+2}{2}}}{(1+\mu|y-x_j|)^{\frac{N+2}{2}+\tau}}.
\end{align*}
Therefore, we get
\begin{align}\label{2.4-3}
\begin{split}
&I\leq C(\|\omega_{2}\|_*^{\frac{N}{N-2}}+\|\omega_{2}\|_*^{\frac{N}{N-2}}\|\omega_{1}\|_*^{\frac{2}{N-2}})\sigma(y)\sum_{j=0}^\infty\frac{\mu^{\frac{N+2}{2}}}{(1+\mu|y-x_j|)^{\frac{N+2}{2}+\tau}}.
\end{split}
\end{align}

It follows from Lemma B.5 of \cite{guo-wang-wang} that $\max\{|\frac{\omega_1}{PU_{x,\mu}}|,|\frac{\omega_2}{PV_{x,\mu}}|\}<\frac{1}{2}.$ Using the basic inequality
$$(1+t)^{\frac{2}{N-2}}-1-\frac{2}{N-2}t=O(t^{2}),\,\,\,\text{for}\,\,\,|t|<\frac12,$$
if $N=5$ by using H\"older inequality again, we obtain
\begin{align}\label{2.4-4}
\begin{split}
 |II|&=\Bigl|\Big((PU_{x,\mu}+\omega_1)^{\frac{2}{N-2}}-PU_{x,\mu}^{\frac{2}{N-2}}-\frac 2{N-2} PU_{x,\mu}^{\frac{4-N}{N-2}}\omega_{1}
\Big)PV_{x,\mu}^{\frac{N}{N-2}}\Bigr|\\
&=PU_{x,\mu}^{\frac{2}{N-2}}PV_{x,\mu}^{\frac{N}{N-2}}\Bigl|(1+\frac{\omega_1}{PU_{x,\mu}})^{\frac{2}{N-2}}-1-\frac 2{N-2} \frac{\omega_1}{PU_{x,\mu}}
\Bigr|\\
&\leq C PU_{x,\mu}^{\frac{2}{N-2}-2}PV_{x,\mu}^{\frac{N}{N-2}}|\omega_1|^2\\
&\leq C \|\omega_{1}\|_*^{2}\sigma(y)\Big(\sum_{j=0}^\infty\frac{\mu^{\frac{N-2}{2}}}{(1+\mu|y-x_j|)^{N-2}}\Big)^{\frac{6-N}{N-2}}\Big(\sum_{j=0}^\infty\frac{\mu^{\frac{N-2}{2}}}{(1+\mu|y-x_j|)^{\frac{N-2}{2}+\tau}}\Big)^{2}\\
&\leq  C \|\omega_{1}\|_*^{2}\sigma(y)\Big(\sum_{j=0}^\infty\frac{\mu^{\frac{N-2}{2}}}{(1+\mu|y-x_j|)^{\frac{N-2}{2}+\tau}}\Big)^{\frac {N+2}{N-2}}\\
&\leq  C \|\omega_{1}\|_*^{2}\sigma(y)
\sum_{j=0}^\infty\frac{\mu^{\frac{N+2}{2}}}{(1+\mu|y-x_j|)^{\frac{N+2}{2}+\tau}}.
\end{split}
\end{align}

Also for $|t|<1$ and $N\geq 6,$ there holds
$$(1+t)^{\frac{2}{N-2}}-1-\frac{2}{N-2}t=O(t^{1+\delta}),$$
where $\frac{4}{N-2}-\delta>0.$
Hence when $N\geq 6,$ we have
\begin{align*}
\begin{split}
 |II|&=\Bigl|\Bigl((PU_{x,\mu}+\omega_1)^{\frac{2}{N-2}}-PU_{x,\mu}^{\frac{2}{N-2}}-\frac 2{N-2} PU_{x,\mu}^{\frac{4-N}{N-2}}\omega_{1}
\Bigr)PV_{x,\mu}^{\frac{N}{N-2}}\Bigr|\\
&=PU_{x,\mu}^{\frac{2}{N-2}}PV_{x,\mu}^{\frac{N}{N-2}}\Bigl|\Bigl(1+\frac{\omega_1}{PU_{x,\mu}}\Bigr)^{\frac{2}{N-2}}-1-\frac 2{N-2} \frac{\omega_1}{PU_{x,\mu}}
\Bigr|\\
&\leq C PU_{x,\mu}^{\frac{2}{N-2}}PV_{x,\mu}^{\frac{N}{N-2}}\Bigl|\frac{\omega_1}{PU_{x,\mu}}\Bigr|^{1+\delta}\cr
&\leq C\|\omega_1\|_{*}^{1+\delta}\sigma(y)\Bigl(\sum\limits_{j=0}^\infty\frac{\mu^{\frac{N-2}{2}}}{(1+\mu|y-x_j|)^{N-2}}\Bigr)^{\frac{4}{N-2}-\delta}\Bigl(\sum\limits_{j=0}^\infty\frac{\mu^{\frac{N-2}{2}}}
{(1+\mu|y-x_j|)^{\frac{N-2}{2}+\tau}}\Bigr)^{1+\delta}\cr
&\leq C\|\omega_1\|_{*}^{1+\delta}\sigma(y)\Bigl(\sum\limits_{j=0}^\infty\frac{\mu^{\frac{N-2}{2}}}{(1+\mu|y-x_j|)^{\frac{N-2}{2}+\tau}}\Bigr)^{\frac{N+2}{N-2}}\\
&\leq C\|\omega_1\|_{*}^{1+\delta}\sigma(y)\sum\limits_{j=0}^\infty\frac{\mu^{\frac{N+2}{2}}}{(1+\mu|y-x_j|)^{\frac{N+2}{2}+\tau}},
\end{split}
\end{align*}
which together with \eqref{2.4-4}, gives
\begin{align}\label{2.4-4''}
\begin{split}
 |II|\leq C \|\omega_{1}\|_*^{1+\delta}\sigma(y)\sum_{j=0}^\infty\frac{\mu^{\frac{N+2}{2}}}{(1+\mu|y-x_j|)^{\frac{N+2}{2}+\tau}}.
\end{split}
\end{align}

Finally, for $III$, we have
\begin{align}\label{2.4-5}
\begin{split}
 |III|&\leq C
|\omega_1|^{\frac{2}{N-2}}|\omega_{2}|PU_{x,\mu}^{\frac{2}{N-2}}\\
&\leq C\|\omega_{1}\|_*^{\frac{2}{N-2}}\|\omega_{2}\|_*\sigma(y)\Big(\sum_{j=0}^\infty\frac{\mu^{\frac{N-2}{2}}}{(1+\mu|y-x_j|)^{\frac{N-2}{2}+\tau}}\Big)^{\frac{N}{N-2}}
\Big(\sum_{j=0}^\infty\frac{\mu^{\frac{N-2}{2}}}{(1+\mu|y-x_j|)^{N-2}}\Big)^{\frac{2}{N-2}}\\
&\leq C\|\omega_{1}\|_*^{\frac{2}{N-2}}\|\omega_{2}\|_*\sigma(y)\Big(\sum_{j=0}^\infty\frac{\mu^{\frac{N-2}{2}}}{(1+\mu|y-x_j|)^{\frac{N-2}{2}+\tau}}\Big)^{\frac {N+2}{N-2}}\\&
\leq  C \|\omega_{1}\|_*^{\frac{2}{N-2}}\|\omega_{2}\|_*\sigma(y)
\sum_{j=0}^\infty\frac{\mu^{\frac{N+2}{2}}}{(1+\mu|y-x_j|)^{\frac{N+2}{2}+\tau}}.
\end{split}
\end{align}

From \eqref{eqs4.3(1)} to \eqref{2.4-5}, we obtain
\begin{align*}
\|N_1(\omega_1,\omega_2)\|_{**}\leq C\|(\omega_1,\omega_2)\|_*^{1+\delta}.
\end{align*}
Similarly, we have
\begin{align*}
\|N_2(\omega_1,\omega_2)\|_{**}\leq C\|(\omega_1,\omega_2)\|_*^{1+\delta}.
\end{align*}
Thus the result follows.
\end{proof}

\begin{lemma}\label{lem4.3}
If  $N \geq 5$, then
$$\|(\ell_1,\ell_2)\|_{* *} \leq C\Bigl(\frac{1}{\mu^{2}}+\frac{|\mu x|^{\beta}}{\mu^{\beta-\tau+1}}\Bigr).$$
\end{lemma}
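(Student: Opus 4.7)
I decompose $\ell_1 = A_1 + A_2 + A_3$ with
\begin{align*}
A_1 &= (K_1(y)-1)(PU_{x,\mu})^{2^*-1},\\
A_2 &= (PU_{x,\mu})^{2^*-1}-U_{x,\mu}^{2^*-1},\\
A_3 &= \tfrac12\bigl[(PU_{x,\mu})^{\frac{2^*}{2}-1}(PV_{x,\mu})^{\frac{2^*}{2}} - U_{x,\mu}^{\frac{2^*}{2}-1}V_{x,\mu}^{\frac{2^*}{2}}\bigr],
\end{align*}
and treat $\ell_2$ analogously by symmetry. The ingredients are Lemma \ref{lm2.1}, Lemma \ref{lm2.2} applied to $\varphi_l:=U_{x,\mu}-PU_{x,\mu}$ (and its analogue for $V_{x,\mu}$), the scaling relation $L^{N-2}\sim\mu^{\beta-N+2}$ coming from $\mu L^{-(N-2)/(\beta-N+2)}\in [C_0/2,2C_0]$, and the identity $V_{x,\mu}=\kappa U_{x,\mu}$, which lets me collapse mixed powers of $U_{x,\mu}$ and $V_{x,\mu}$ to pure powers of $U_{x,\mu}$ up to multiplicative constants.

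For $A_1$, I split the pointwise estimate at $\partial B_\delta(x)$. On $B_\delta(x)$, assumption $(A_2)$ gives $|K_1(y)-1|\leq C|y|^{\beta_1}\leq C(|x|^{\beta_1}+|y-x|^{\beta_1})$ and Lemma \ref{lm2.1} gives $PU_{x,\mu}\leq C\sum_{j\geq 0} U_{x_j,\mu}$. The $|x|^{\beta_1}U_{x,\mu}^{2^*-1}$ piece, after dividing by the weight $\sigma(y)\sum_j\mu^{(N+2)/2}(1+\mu|y-x_j|)^{-(N+2)/2-\tau}$ and evaluating where it concentrates (namely $y\approx x$, where the $j=0$ term dominates and $\sigma(y)\sim\mu^{-(\tau-1)}$), produces precisely the $|\mu x|^\beta/\mu^{\beta-\tau+1}$ contribution. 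The $|y-x|^{\beta_1}U_{x,\mu}^{2^*-1}$ piece yields a remainder of order $\mu^{-\beta_1}\leq 1/\mu^2$ since $\beta_1>N-2\geq 3$. On $\Omega\setminus B_\delta(x)$, $L^\infty$-boundedness of $K_1$ and fast decay of $U_{x_j,\mu}$ produce a negligible contribution by reusing the weighted integral estimates already set up for Lemma \ref{lem3.4} and Lemma \ref{lem3.5}.

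For $A_2$ and $A_3$, the mean value theorem combined with Lemma \ref{lm2.2} gives
\begin{equation*}
|A_2|+|A_3|\leq C\bigl(U_{x,\mu}^{2^*-2}(|\varphi_1|+|\varphi_2|)+|\varphi_1|^{2^*-1}+|\varphi_2|^{2^*-1}\bigr),
\end{equation*}
where for $A_3$ I first add and subtract $U_{x,\mu}^{\frac{2^*}{2}-1}(PV_{x,\mu})^{\frac{2^*}{2}}$ and apply the mean value theorem separately in each factor. On $B_1(x)$, inserting the bound $|\varphi_l|\leq C\mu^{-(N-2)/2}L^{-(N-2)}$ and comparing with the weight produces a factor of order $\mu^{\tau-1-\beta}$, which is bounded by $1/\mu^2$ since $\tau\leq (N-2)/2<\beta-1$. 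On $\Omega\setminus B_1(x)$, the global bound $|\varphi_l|\leq C\sum_{j\geq 0}U_{x_j,\mu}$ reduces the estimate to weighted sums of exactly the type controlled in \eqref{eqs3.21}--\eqref{eqs3.25}, giving a further remainder $\lesssim 1/(\mu L)^{N-2\vartheta}$, again absorbed into $1/\mu^2$.

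The main obstacle is the bookkeeping for $A_3$ when $N\geq 6$, because the exponent $\frac{2^*}{2}-1=\frac{2}{N-2}<1$ prevents a naive second-order Taylor expansion. Following the device used in the proof of Lemma \ref{lem4.2}, I will instead appeal to the inequality $|(1+t)^{2/(N-2)}-1-\frac{2}{N-2}t|\leq C|t|^{1+\delta}$ for $|t|<\tfrac12$, applied to $t=-\varphi_l/PU_{x,\mu}$ (which is $<\tfrac12$ in the relevant regime by Lemma B.5 of \cite{guo-wang-wang}); for $N=5$, the integer exponent $2^*-1=3$ permits a direct expansion. Collecting all contributions and taking the supremum over $y\in\Omega$ yields the stated bound, and the proof for $\ell_2$ is identical with $U$ and $V$ interchanged.
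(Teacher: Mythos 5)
Your decomposition of $\ell_1$ is a valid regrouping of the paper's (the paper takes $J_1 = K_1\bigl[(PU)^{2^*-1}-U^{2^*-1}\bigr]$, $J_2 = (K_1-1)U^{2^*-1}$ and the coupling difference $J_3$; you shift the $K_1-1$ factor onto $(PU)^{2^*-1}$, which is equivalent by Lemma~\ref{lm2.1}), and the overall plan — split at $\partial B_1(x)$, extract the $|\mu x|^\beta/\mu^{\beta-\tau+1}$ contribution from the $|x|^{\beta_1}$ part of $K_1-1$, and absorb everything else into $C/\mu^2$ — matches the paper. The one substantive divergence is in how you treat the coupling term $A_3$. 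The paper uses the fact that not only is $V_{x,\mu}=\kappa U_{x,\mu}$, but also $PV_{x,\mu}$ is a constant multiple of $PU_{x,\mu}$ (this follows directly from the explicit formula \eqref{eqs1.5} once $V=\kappa U$ is substituted, together with the algebraic constraint on $\kappa$). Hence $J_3$ collapses to $C\bigl((PU_{x,\mu})^{2^*-1}-U_{x,\mu}^{2^*-1}\bigr)$ and is estimated exactly like $J_1$, with no fractional-exponent subtleties at all. You observe $V=\kappa U$ but do not seem to notice that $PV\propto PU$, and therefore launch into a two-term add-and-subtract and mean value argument for $A_3$. This route does work — on $B_1(x)$ one has $|\varphi_l|\ll U$ so the first-order Taylor gives $\leq CU^{2^*-2}(|\varphi_1|+|\varphi_2|)$, and off $B_1(x)$ the crude bound $|\varphi_l|\leq C\sum_j U_{x_j,\mu}$ reduces to the $(\sum_j U_{x_j,\mu})^{2^*-1}$ estimate already needed for $J_{12}$ — but it is more work than necessary, and your remark invoking the second-order remainder bound $|(1+t)^{2/(N-2)}-1-\tfrac{2}{N-2}t|\leq C|t|^{1+\delta}$ is misplaced here: that estimate is needed for the quadratic error in $N_1$ (Lemma~\ref{lem4.2}), whereas in $\ell_1$ you only need the first-order bound $|(1+t)^{p}-1|\leq C|t|$ for $|t|<\tfrac12$. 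Finally, a small arithmetic slip: the $|y-x|^{\beta_1}U^{2^*-1}$ remainder is of order $\mu^{-(N+2)/2+\tau}=\mu^{-2-\vartheta}$, not $\mu^{-\beta_1}$; both are $\leq\mu^{-2}$ so your conclusion stands, but the stated rate is wrong for large $N$.
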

\begin{proof}
We have
\begin{align*}
\begin{split}
\ell_1=&K_1(y)\Bigl(\left(P U_{x, \mu}\right)^{2^{*}-1}-U_{x, \mu}^{2^{*}-1}\Bigr)+(K_1(y)-1) U_{x, \mu}^{2^{*}-1}-\frac{1}{2}\Bigl(U_{x,\mu}^{\frac{2^*}{2}-1}V_{x,\mu}^{\frac{2^*}{2}}-PU_{x,\mu}^{\frac{2^*}{2}-1}PV_{x,\mu}^{\frac{2^*}{2}}\Bigr)\\
:=&J_{1}+J_{2}+J_3.
\end{split}
\end{align*}

It follows from Lemma 2.2 that
\begin{equation}\label{eqs4.4}
\left|J_{1}\right|\leq C\Bigl|(P U_{x, \mu})^{2^{*}-1}-U_{x, \mu}^{2^{*}-1}\Bigr| \leq C U_{x, \mu}^{2^{*}-2}\left|\varphi_1\right|+C\left|\varphi_1\right|^{2^{*}-1}:=J_{11}+J_{12}.
\end{equation}
To estimate  $J_{11}$, noting that  $\beta>N-2$, for  $y \in B_{1}(x)$, we have
\begin{align}\begin{split}\label{eqs4.5}
\left|J_{11}\right| & \leq \frac{C \mu^{2}}{(1+\mu|y-x|)^{4}} \frac{1}{\mu^{\frac{N-2}{2}} L^{N-2}}\leq \frac{C \mu^{\frac{N+2}{2}}}{(1+\mu|y-x|)^{4}} \frac{1}{\mu^{\beta}} \\
& \leq \frac{C \mu^{\frac{N+2}{2}}}{(1+\mu|y-x|)^{\frac{N+2}{2}+\tau}}\Bigl(\frac{1+\mu|y-x|}{\mu}\Bigr)^{\tau-1} \frac{1}{\mu^{2}} \\
& \leq \frac{C \sigma(y) \mu^{\frac{N+2}{2}}}{(1+\mu|y-x|)^{\frac{N+2}{2}+\tau}} \frac{1}{\mu^{2}}.
\end{split}
\end{align}
For  $y \in \Omega \backslash B_{1}(x)$, we have
\begin{equation}\label{eqs4.6}
\left|J_{11}\right| \leq C U_{x, \mu}^{2^{*}-2} \sum_{j=0}^{\infty} U_{x_{j}, \mu},
\end{equation}
\begin{equation}\label{eqs4.7}
\frac{\mu^{\frac{N+2}{2}}}{(1+\mu|y-x|)^{N+2}} \leq \frac{C \mu^{\frac{N+2}{2}}}{(1+\mu|y-x|)^{\frac{N+2}{2}+\tau}} \frac{1}{\mu^{2}}
\end{equation}
and
\begin{align}\begin{split}\label{eqs4.8}
\frac{\mu^{\frac{N+2}{2}}}{(1+\mu|y-x|)^{4}} \sum_{j=1}^{\infty} \frac{1}{\left(1+\mu\left|y-x_{j}\right|\right)^{N-2}}
\leq &\frac{C \mu^{\frac{N+2}{2}}}{(1+\mu|y-x|)^{\frac{N+2}{2}+\tau}} \sum_{j=1}^{\infty} \frac{1}{\left(1+\mu\left|y-x_{j}\right|\right)^{\frac{N+2}{2}-\tau}} \cr
\leq &\frac{C \mu^{\frac{N+2}{2}}}{(1+\mu|y-x|)^{\frac{N+2}{2}+\tau}} \frac{1}{(\mu L)^{\frac{N+2}{2}-\tau}}.
\end{split}
\end{align}
Combining \eqref{eqs4.5}-\eqref{eqs4.8}, we obtain
\begin{equation}\label{eqs4.9}
\left\|J_{11}\right\|_{* *} \leq \frac{C}{\mu^{2}}.
\end{equation}
Similarly, for  $y \in B_{1}(x)$, we have
\begin{align}\begin{split}\label{eqs4.10}
\left|J_{12}\right|  \leq \frac{C}{\mu^{\frac{N+2}{2}} L^{N+2}}\leq \frac{C \mu^{\frac{N+2}{2}}}{\mu^{\left(2^{*}-1\right) \beta}}
\leq& \frac{C \mu^{\frac{N+2}{2}}}{(1+\mu|y-x|)^{\frac{N+2}{2}+\tau}}\Bigl(\frac{1+\mu|y-x|}{\mu}\Bigr)^{\tau-1} \frac{1}{\mu^{2}} \\
 \leq &\frac{C \sigma(y) \mu^{\frac{N+2}{2}}}{(1+\mu|y-x|)^{\frac{N+2}{2}+\tau}} \frac{1}{\mu^{2}}.
\end{split}
\end{align}
For  $y \in \Omega \backslash B_{1}(x)$, similar to \eqref{eqs4.3}, we get
\begin{align}\begin{split}\label{eqs4.11}
\Bigl(\sum_{j=1}^{\infty} \frac{1}{\left(1+\mu\left|y-x_{j}\right|\right)^{N-2}}\Bigr)^{2^{*}-1}
&\leq\Bigl(\sum_{j=1}^{\infty} \frac{1}{\left(1+\mu\left|y-x_{j}\right|\right)^{\frac{N-2}{2}+\tau}}\Bigr)^{2^{*}-1} \cr
&\leq C \sum_{j=1}^{\infty} \frac{1}{\left(1+\mu\left|y-x_{j}\right|\right)^{\frac{N+2}{2}+\tau}}\Bigl(\sum_{j=1}^{\infty} \frac{1}{\left(1+\mu\left|y-x_{j}\right|\right)^{\tau}}\Bigr)^{\frac{4}{N-2}}\cr
&\leq C \sum_{j=1}^{\infty} \frac{\mu^{\frac{N+2}{2}}}{\left(1+\mu\left|y-x_{j}\right|\right)^{\frac{N+2}{2}+\tau}} \frac{1}{(\mu L)^{\frac{4 \tau}{N-2}}}.
\end{split}
\end{align}
Thus by \eqref{eqs4.7} and \eqref{eqs4.11}, for  $y \in \Omega \backslash B_{1}(x)$, we see
\begin{align}\begin{split}\label{eqs4.12}
\left|J_{12}\right| & \leq\Bigl(\sum_{j=0}^{\infty} U_{x_j, \mu}\Bigr)^{2^{*}-1} \leq C U_{x, \mu}^{2^{*}-1}+C\Bigl(\sum_{j=1}^{\infty} U_{x_j, \mu}\Bigr)^{2^{*}-1} \\
& \leq \frac{C \mu^{\frac{N+2}{2}}}{(1+\mu|y-x|)^{\frac{N+2}{2}+\tau}} \frac{1}{\mu^{2}}+C \sum_{j=1}^{\infty} \frac{\mu^{\frac{N+2}{2}}}{\left(1+\mu\left|y-x_{j}\right|\right)^{\frac{N+2}{2}+\tau}} \frac{1}{(\mu L)^{\frac{4 \tau}{N-2}}}.
\end{split}
\end{align}
Combining \eqref{eqs4.10} and \eqref{eqs4.12}, we obtain
\begin{equation}\label{eqs4.13}
\left\|J_{12}\right\|_{* *} \leq \frac{C}{\mu^{2}}.
\end{equation}
Thus, \eqref{eqs4.9} and \eqref{eqs4.13} yield
\begin{equation}\label{eqs4.14}
\left\|J_{1}\right\|_{* *} \leq \frac{C}{\mu^{2}}.
\end{equation}
Now, we estimate  $J_{2}$. If  $|y-x| \geq 1$, then we have
\begin{equation}\label{eqs4.15}
|K_1(y)-1| U_{x, \mu}^{2^{*}-1} \leq \frac{C \mu^{\frac{N+2}{2}}}{(1+\mu|y-x|)^{\frac{N+2}{2}+\tau}} \frac{1}{\mu^{\frac{N+2}{2}-\tau}}.
\end{equation}
If  $|y-x| \leq 1$, then we get

\begin{align}\begin{split}\label{eqs4.16}
|K_1(y)-1| U_{x, \mu}^{2^{*}-1} & \leq C|y|^{\beta} U_{x, \mu}^{2^{*}-1} \\
& \leq \frac{C \sigma(y) \mu^{\frac{N+2}{2}}}{(1+\mu|y-x|)^{\frac{N+2}{2}+\tau}} \frac{\mu^{\tau-1}\left(|x|^{\beta}+|y-x|^{\beta}\right)}{(1+\mu|y-x|)^{\frac{N+2}{2}-1}}.
\end{split}
\end{align}

Noting that  $\beta>N-2>\frac{N}{2}$, we have
$$\frac{\mu^{\tau-1}|y-x|^{\beta}}{(1+\mu|y-x|)^{\frac{N+2}{2}-1}} \leq \frac{C}{\mu^{\frac{N+2}{2}-\tau}} \leq \frac{C}{\mu^{2}}.$$
Hence, we have proved
\begin{equation}\label{eqs4.17}
\left\|J_{2}\right\|_{* *} \leq \frac{C}{\mu^{2}}+C \mu^{\tau-1}|x|^{\beta}.
\end{equation}
Since for $V_{x,\mu}=\kappa U_{x,\mu}$, we have
\begin{align*}
J_3=-\frac{1}{2}\Bigl(U_{x,\mu}^{\frac{2^*}{2}-1}V_{x,\mu}^{\frac{2^*}{2}}-PU_{x,\mu}^{\frac{2^*}{2}-1}PV_{x,\mu}^{\frac{2^*}{2}}\Bigr)=C\Bigl(PU_{x,\mu}^{2^*-1}-U_{x,\mu}^{2^*-1}\Bigr).
\end{align*}
Similar as $J_1$, we obtain $\|J_3\|_{**}\leq \frac{C}{\mu^2}$.

For $\ell_2$, we proceed it as $\ell_1$. As a result, we complete the proof.
\end{proof}

\begin{proposition}\label{pro4.1}
Under the assumptions of Theorem 1.1, if  $L>0$  is sufficiently large, then \eqref{eqs4.1} admits a unique solution  $(\omega_1,\omega_2)=(\omega_1(x, \mu),\omega_2(x, \mu)) \in \mathbf{E}$ , such that
$$
\|(\omega_1,\omega_2)\|_{*} \leq C\Bigl(\frac{1}{\mu^{2}}+\frac{|\mu x|^{\beta}}{\mu^{\beta-\tau+1}}\Bigr).
$$
Moreover  $\|(\omega_1,\omega_2)\|_{*}$  is  $C^{1}$  in  $(x, \mu)$.
\end{proposition}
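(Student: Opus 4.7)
The plan is to recast the equation as a fixed point problem and apply the Banach contraction principle in a small ball of $\mathbf{E}$, then obtain $C^1$ regularity in $(x,\mu)$ from the implicit function theorem.

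First, since $I-T$ is a bijective bounded linear operator on $\mathbf{E}$ by Proposition \ref{pro3.6}, its inverse $(I-T)^{-1}:\mathbf{E}\to\mathbf{E}$ is bounded. Define
\[
\mathcal{A}(\omega_1,\omega_2):=(I-T)^{-1}(-\Delta)^{-1}\bigl(\mathbf{P}\ell+\mathbf{P}N(\omega_1,\omega_2)\bigr),
\]
so that solving \eqref{eqs4.1} is equivalent to finding a fixed point of $\mathcal{A}$ in $\mathbf{E}$. Set the target radius
\[
r:=C_{0}\Bigl(\frac{1}{\mu^{2}}+\frac{|\mu x|^{\beta}}{\mu^{\beta-\tau+1}}\Bigr),
\]
with $C_{0}$ to be fixed large, and let $\mathcal{B}_r:=\{(\omega_1,\omega_2)\in\mathbf{E}:\|(\omega_1,\omega_2)\|_*\le r\}$.

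The key estimates chain together as follows. By Lemma \ref{lm3.1}, $\|(-\Delta)^{-1}h\|_*\le C\|h\|_{**}$; by the definition of $\mathbf{P}$ (which preserves the $\|\cdot\|_{**}$ norm up to a constant), $\|\mathbf{P}h\|_{**}\le C\|h\|_{**}$; and $\|(I-T)^{-1}\|_{\mathbf{E}\to\mathbf{E}}\le C$. Combining Lemmas \ref{lem4.2} and \ref{lem4.3}, for $(\omega_1,\omega_2)\in\mathcal{B}_r$ I obtain
\[
\|\mathcal{A}(\omega_1,\omega_2)\|_*\le C\bigl(\|(\ell_1,\ell_2)\|_{**}+\|N(\omega_1,\omega_2)\|_{**}\bigr)\le C\Bigl(\tfrac{1}{\mu^{2}}+\tfrac{|\mu x|^{\beta}}{\mu^{\beta-\tau+1}}\Bigr)+Cr^{1+\delta}.
\]
For $L$ large (hence $\mu$ large), $r$ is small, so $Cr^{1+\delta}\le \tfrac{1}{2}r$, and choosing $C_{0}\ge 2C$ yields $\mathcal{A}(\mathcal{B}_r)\subset\mathcal{B}_r$. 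For contraction, a standard refinement of the computation in Lemma \ref{lem4.2} gives $\|N(\omega_1,\omega_2)-N(\tilde\omega_1,\tilde\omega_2)\|_{**}\le C(\|(\omega_1,\omega_2)\|_*^{\delta}+\|(\tilde\omega_1,\tilde\omega_2)\|_*^{\delta})\|(\omega_1-\tilde\omega_1,\omega_2-\tilde\omega_2)\|_*$, which on $\mathcal{B}_r$ is bounded by $o(1)\|\cdot\|_*$. Thus $\mathcal{A}$ is a contraction on $\mathcal{B}_r$, and the Banach fixed point theorem supplies a unique $(\omega_1,\omega_2)\in\mathcal{B}_r$ solving \eqref{eqs4.1}. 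Uniqueness in $\mathbf{E}$ at this scale is automatic from the contraction.

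For the $C^{1}$ dependence on $(x,\mu)$, I would apply the implicit function theorem to the map
\[
\mathcal{F}(\omega_1,\omega_2,x,\mu):=(\omega_1,\omega_2)-T(\omega_1,\omega_2)-(-\Delta)^{-1}\bigl(\mathbf{P}\ell+\mathbf{P}N(\omega_1,\omega_2)\bigr).
\]
The parameters $(x,\mu)$ enter through $PU_{x,\mu}$, $PV_{x,\mu}$, the projection constants, and the space $\mathbf{E}$ itself (via the orthogonality conditions). The dependence is smooth, and the partial Fréchet derivative in $(\omega_1,\omega_2)$ at the solution equals $I-T-(-\Delta)^{-1}\mathbf{P}N'(\omega_1,\omega_2)$. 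Since $\|N'\|=o(1)$ on $\mathcal{B}_r$ and $I-T$ is invertible, a Neumann-series argument shows this derivative is invertible with a uniformly bounded inverse, delivering $C^{1}$ dependence.

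The main obstacle is the $C^1$ regularity argument, because the orthogonality conditions defining $\mathbf{E}$ themselves depend on $(x,\mu)$, so one must either work with a fixed ambient space and introduce the $c_h$'s as auxiliary unknowns satisfying \eqref{eqs3.11}--\eqref{eqs3.12}, or parametrize solutions after projecting out the kernel directions; either way, verifying that the full system $(\omega_1,\omega_2,c_1,\dots,c_{N+1})$ has a smooth solution requires checking that the linearization of the augmented system is invertible, which follows from \eqref{eqs3.35} together with the smallness of $N'$.
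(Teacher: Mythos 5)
Your proposal is correct and follows essentially the same route as the paper: recast \eqref{eqs4.1} as $(\omega_1,\omega_2)=(I-T)^{-1}(-\Delta)^{-1}(\mathbf{P}\ell+\mathbf{P}N(\omega_1,\omega_2))$, apply the contraction mapping principle on a small ball of $\mathbf{E}$ using Lemmas \ref{lm3.1}, \ref{lem4.2}, \ref{lem4.3} and Proposition \ref{pro3.6}, and invoke the implicit function theorem for $C^1$ dependence on $(x,\mu)$. The only cosmetic difference is that the paper works in the ball $\{\|(\omega_1,\omega_2)\|_*\le M/\mu^2\}$ rather than your $r$, which are equivalent under the standing assumption $|x|\le\mu^{-1-\theta}$ since $\beta-\tau-1>0$; your elaboration of the implicit-function-theorem step is also consistent with the sketch in the paper (which defers to \cite{Li-Wei}).
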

\begin{proof}
In view of Proposition 3.2, \eqref{eqs4.1} is equivalent to
\begin{equation}\label{eqs4.2(1)}
(\omega_1,\omega_2)=B (\omega_1,\omega_2):=(I-T)^{-1}(-\Delta)^{-1}(\mathbf{P}\ell+\mathbf{P} N(\omega_{1},\omega_{2})), \quad (\omega_1,\omega_2) \in \mathbf{E}.
\end{equation}
By Lemma 3.6, we have
$$\|B (\omega_1,\omega_2)\|_{*} \leq C\|\ell\|_{**}+C\|N(\omega_1,\omega_2)\|_{* *},$$
and
$$\left\|B\left(\omega_{1}, \omega_{2}\right)-B\left(\widetilde{\omega_{1}}, \widetilde{\omega_{2}}\right)\right\|_{*} \leq C\left\|N\left(\omega_{1}, \omega_{2}\right)-N\left(\widetilde{\omega_{1}}, \widetilde{\omega_{2}}\right)\right\|_{* *}.$$
By Lemmas 4.2 and 4.3, we can prove that  $B$  is a contraction map from the set
 $$\Bigl\{(\omega_1,\omega_2) \in   \left.\mathbf{E}:\|(\omega_1,\omega_2)\|_{*} \leq \frac{M}{\mu^{2}}\right\}$$
to itself, where  $M>0$  is a large constant. So the contraction mapping theorem implies that for large  $L>0$, \eqref{eqs4.2(1)} has a solution  $(\omega_1,\omega_2) \in \mathbf{E}$, satisfying
$$
\|(\omega_1,\omega_2)\|_{*} \leq C\|(\ell_1,\ell_2)\|_{* *}.
$$
Moreover, similar to the proof in \cite{Li-Wei}, we can show that  $\left\|\left(\omega_{1}, \omega_{2}\right)\right\|_{*}$  is  $C^{1}$  in  $(x, \mu)$  by implicit function theorem.
\end{proof}

Now we will prove Theorem 1.1

\textbf{Proof of Theorem 1.1} We need to show the existence of  $\left(x, \mu\right)=\left(x_{L}, \mu_{L}\right)$, such that
\begin{align}\begin{split}\label{eqs4.18}
-&\int_{\Omega} \Delta\left(P U_{x, \mu}+\omega_1\right) \partial_{h}\left(P U_{x, \mu}\right)
-\int_{\Omega} K_1(y)\left(P U_{x, \mu}+\omega_1\right)_{+}^{2^{*}-1} \partial_{h}\left(P U_{x, \mu}\right)\cr
\quad -&\frac{1}{2}\int_{\Omega}(PU_{x,\mu}+\omega_1)^{\frac{2^*}{2}-1}(PV_{x,\mu}+\omega_2)^{\frac{2^*}{2}}\partial_{h}PU_{x, \mu}
-\int_{\Omega} \Delta\left(P V_{x, \mu}+\omega_2\right) \partial_{h}\left(P V_{x, \mu}\right)\\
\quad -&\int_{\Omega} K_2(y)\left(P V_{x, \mu}+\omega\right)_{+}^{2^{*}-1} \partial_{h}\left(P V_{x, \mu}\right)
-\frac{1}{2}\int_{\Omega}(PV_{x,\mu}+\omega_2)^{\frac{2^*}{2}-1}(PU_{x,\mu}+\omega_1)^{\frac{2^*}{2}}\partial_{h}P V_{x, \mu}=0
\end{split}
\end{align}
for  $h=1, \ldots, N+1$.

From  $(\omega_1,\omega_2) \in \mathbf{E}$, we obtain
\begin{equation}\label{eqs4.19}
-\int_{\Omega} \Delta \omega_1 \partial_{h}\left(P U_{x, \mu}\right)=-\int_{\Omega} \Delta\left(\partial_{h}\left(P U_{x, \mu}\right)\right) \omega_1=C \int_{\Omega} \omega_1 U_{x, \mu}^{2^{*}-2} Z_{h}=0.
\end{equation}

Noting that for any  $\gamma>1$, we have  $(1+t)_{+}^{\gamma}-1-\gamma t=O\left(t^{2}\right)$  for all  $t \in \mathbb{R}$  if  $\gamma \leq 2$; and  $\left|(1+t)_{+}^{\gamma}-1-\gamma t\right| \leq C\left(t^{2}+|t|^{\gamma}\right)$  for all  $t \in \mathbb{R}$  if  $\gamma>2$. So, we can deduce that if  $N \geq 6$,
\begin{align}\begin{split}\label{eqs4.20}
&\Bigl|\int_{\Omega} K_1(y)\left[\left(P U_{x, \mu}+\omega_1\right)_{+}^{2^{*}-1}-\left(P U_{x, \mu}\right)^{2^{*}-1}-\left(2^{*}-1\right)\left(P U_{x, \mu}\right)^{2^{*}-2} \omega_1\right] \partial_{h}\left(P U_{x, \mu}\right)\Bigr| \\
&\quad \leq C \int_{\Omega}\left(P U_{x, \mu}\right)^{2^{*}-3}|\omega_1|^{2}\left|\partial_{h}\left(P U_{x, \mu}\right)\right| \leq C \mu^{\alpha(h)} \int_{\Omega}\left(P U_{x, \mu}\right)^{2^{*}-2}|\omega_1|^{2}.
\end{split}
\end{align}
We have
\begin{align}\label{eqs4.21}
&\int_{\Omega \backslash B_{1}(x)}\left(P U_{x, \mu}\right)^{2^{*}-2}|\omega_1|^{2}\\
&\quad \leq C\|\omega_1\|_{*}^{2} \int_{\Omega \backslash B_{1}(x)}\left(P U_{x, \mu}\right)^{2^{*}-2}\Bigl[\sum_{j=0}^{\infty} \frac{\mu^{\frac{N-2}{2}}}{\left(1+\mu\left|y-x_{j}\right|\right)^{\frac{N-2}{2}+\tau}}\Bigr]^{2} \cr
&\quad \leq C\|\omega_1\|_{*}^{2} \int_{\Omega \backslash B_{1}(x)}\Bigl[\sum_{j=0}^{\infty} \frac{\mu^{\frac{N-2}{2}}}{\left(1+\mu\left|y-x_{j}\right|\right)^{\frac{N-2}{2}+\tau}}\Bigr]^{2^{*}} \cr
&\quad \leq C\|\omega_1\|_{*}^{2} \mu^{N} \int_{\Omega \backslash B_{1}(x)}\Bigl(\frac{1}{(1+\mu|y-x|)^{N-2-\vartheta}}+\frac{1}{(1+\mu|y-x|)^{\frac{N-2}{2}+\vartheta}} \frac{1}{(\mu L)^{\frac{N-2}{2}-2 \vartheta}}\Bigr)^{2^{*}} \cr
&\quad \leq C\|\omega_1\|_{*}^{2}\Bigl(\frac{1}{\mu^{N-2^{*} \vartheta}}+\frac{1}{(\mu L)^{N-2^{*}(2 \vartheta)}}\Bigr)
\end{align}
and
\begin{align}\begin{split}\label{eqs4.22}
&\int_{B_{1}(x)}(P U_{x, \mu})^{2^{*}-2}|\omega_1|^{2}\\
&\quad \leq C\|\omega_1\|_{*}^{2} \int_{B_{1}(x)}(P U_{x, \mu})^{2^{*}-2}\Bigl[\frac{(1+\mu|y-x|)^{\tau-1}}{\mu^{\tau-1}} \sum_{j=0}^{\infty} \frac{\mu^{\frac{N-2}{2}}}{(1+\mu|y-x_{j}|)^{\frac{N-2}{2}+\tau}}\Bigr]^{2}\cr
&\quad \leq C\|\omega_1\|_{*}^{2} \mu^{N} \int_{B_{1}(x)} \frac{1}{(1+\mu|y-x|)^{4}}\Bigl[\frac{(1+\mu|y-x|)^{\tau-1}}{\mu^{\tau-1}} \frac{1}{(1+\mu|y-x|)^{\frac{N-2}{2}+\tau}}\Bigr]^{2}\\
&\quad \leq C(\mu^{1-\tau}\|\omega_1\|_{*})^{2}.
\end{split}\end{align}
Combining \eqref{eqs4.20}-\eqref{eqs4.22}, we obtain
\begin{align}\label{eqs4.23}
&\Bigl|\int_{\Omega} K_1(y)\Bigl[\left(P U_{x, \mu}+\omega_1\right)_{+}^{2^{*}-1}-\left(P U_{x, \mu}\right)^{2^{*}-1}-\left(2^{*}-1\right)\left(P U_{x, \mu}\right)^{2^{*}-2} \omega_1\Bigr] \partial_{h}\left(P U_{x, \mu}\right)\Bigr| \cr
&\quad \leq C\left(\mu^{1-\tau}\|\omega_1\|_{*}\right)^{2} \mu^{\alpha(h)}.
\end{align}
Similarly, if  $N=5$, then we have
\begin{align}\begin{split}\label{eqs4.24}
&\Bigl|\int_{\Omega} K_1(y)\Bigl[\left(P U_{x, \mu}+\omega_1\right)_{+}^{2^{*}-1}-\left(P U_{x, \mu}\right)^{2^{*}-1}-\left(2^{*}-1\right)\left(P U_{x, \mu}\right)^{2^{*}-2} \omega_1\Bigr] \partial_{h}\left(P U_{x, \mu}\right)\Bigr| \\
&\quad \leq C \int_{\Omega}\Bigl[\left(P U_{x, \mu}\right)^{2^{*}-3}|\omega_1|^{2}+|\omega_1|^{2^{*}-1}\Bigr]\left|\partial_{h}\left(P U_{x, \mu}\right)\right| \leq C\left(\mu^{1-\tau}\|\omega_1\|_{*}\right)^{2} \mu^{\alpha(h)}.
\end{split}
\end{align}
From \eqref{eqs4.19}-\eqref{eqs4.24}, we find
\begin{align}\begin{split}\label{eqs4.25}
&-\int_{\Omega} \Delta\left(P U_{x, \mu}+\omega_1\right) \partial_{h}\left(PU_{x, \mu}\right)-\int_{\Omega} K_1(y)\left(P U_{x, \mu}+\omega_1\right)_{+}^{2^{*}-1} \partial_{h}(PU_{x, \mu})\cr
&\quad -\frac{1}{2}\int_{\Omega}(PU_{x,\mu}+\omega_1)^{\frac{2^*}{2}-1}(PV_{x,\mu}+\omega_2)^{\frac{2^*}{2}}\partial_{h}PU_{x, \mu} \cr
=&-\int_{\Omega}\Delta\left(P U_{x,\mu}\right)\partial_{h}\left(P U_{x, \mu}\right)-\int_{\Omega} K_1(y)\left(P U_{x,\mu}\right)^{2^{*}-1}\partial_{h}\left(P U_{x, \mu}\right)\\
&-(2^*-1)\int_{\Omega}K_1(y)(PU_{x,\mu})^{2^*-2}\omega_1\partial_{h}\left(P U_{x, \mu}\right)\cr
&-\frac{1}{2}\int_{\Omega}\Bigl((PU_{x,\mu}+\omega_1)^{\frac{2^*}{2}-1}(PV_{x,\mu}+\omega_2)^{\frac{2^*}{2}}-(PU_{x,\mu})^{\frac{2^*}{2}-1}PV_{x,\mu}^{\frac{2^*}{2}}\Bigr)\partial_{h}PU_{x, \mu}\\
&-\frac{1}{2}\int_{\Omega}PU_{x,\mu}^{\frac{2^*}{2}-1}PV_{x,\mu}^{\frac{2^*}{2}}\partial_{h}PU_{x, \mu}\\
&-\int_{\Omega} K_1(y)\Bigl(\left(P U_{x, \mu}+\omega_1\right)_{+}^{2^{*}-1}-\left(P U_{x, \mu}\right)^{2^{*}-1}-\left(2^{*}-1\right)\left(P U_{x, \mu}\right)^{2^{*}-2} \omega_1\Bigr) \partial_{h}\left(P U_{x, \mu}\right)\\
=&-\int_{\Omega}\Delta\left(P U_{x,\mu}\right)\partial_{h}\left(P U_{x, \mu}\right)-\int_{\Omega} K_1(y)\left(P U_{x,\mu}\right)^{2^{*}-1}\partial_{h}\left(P U_{x, \mu}\right)-\frac{1}{2}\int_{\Omega}PU_{x,\mu}^{\frac{2^*}{2}-1}PV_{x,\mu}^{\frac{2^*}{2}}\partial_{h}PU_{x, \mu}\cr
&-(2^*-1)\int_{\Omega}K_1(y)PU_{x,\mu}^{2^*-2}\omega_1\partial_{h}\left(P U_{x, \mu}\right)+\mu^{\alpha(h)}O(\mu^{1-\tau}\|\omega_1\|_{*}^2)\cr
&-\frac{1}{2}\int_{\Omega}\Bigl((PU_{x,\mu}+\omega_1)^{\frac{2^*}{2}-1}(PV_{x,\mu}+\omega_2)^{\frac{2^*}{2}}-PU_{x,\mu}^{\frac{2^*}{2}-1}PV_{x,\mu}^{\frac{2^*}{2}}\Bigr)\partial_{h}PU_{x, \mu}.
\end{split}
\end{align}
From \eqref{eqs3.13}, we have
\begin{equation}\label{eqs4.26}
\Bigl|\int_{\Omega} K_1(y)\left(P U_{x, \mu}\right)^{2^{*}-2} \partial_{h}\left(P U_{x, \mu}\right) \omega_1\Bigr| \leq \frac{C \mu^{\alpha(h)}\|(\omega_1,\omega_2)\|_{*}}{\mu^{\beta_1}} \leq \frac{C \mu^{\alpha(h)}}{\mu^{\beta_1+2}}
\end{equation}
and
\begin{equation}\begin{split}\label{eqs4.26(1)}
&\Bigl|\int_{\Omega}\Bigl((PU_{x,\mu}+\omega_1)^{\frac{2^*}{2}-1}(PV_{x,\mu}+\omega_2)^{\frac{2^*}{2}}-PU_{x,\mu}^{\frac{2^*}{2}-1}PV_{x,\mu}^{\frac{2^*}{2}}\Bigr)\partial_{h}PU_{x, \mu}\Bigr|\\
&\quad \leq \frac{C \mu^{\alpha(h)}\|(\omega_1,\omega_2)\|_{*}}{\mu^{\beta_1}} \leq \frac{C \mu^{\alpha(h)}}{\mu^{\beta_1+2}}.
\end{split}\end{equation}

We also have
\begin{equation}\label{eqs4.27}
\left(\mu^{1-\tau}\|(\omega_1,\omega_2)\|_{*}\right)^{2} \leq \frac{C}{\mu^{N-2 \vartheta}}.
\end{equation}
Putting \eqref{eqs4.26} and \eqref{eqs4.27} into \eqref{eqs4.25}, we obtain
\begin{align}\begin{split}\label{eqs4.28}
&-\int_{\Omega}\Delta\left(PU_{x,\mu}+\omega_1\right)\partial_{h}\left(PU_{x,\mu}\right)-\int_{\Omega}K_1(y)\left(PU_{x,\mu}+\omega_1\right)_{+}^{2^{*}-1}\partial_{h}\left(PU_{x, \mu}\right)\cr
&\quad -\frac{1}{2}\int_{\Omega}(PU_{x,\mu}+\omega_1)^{\frac{2^*}{2}-1}(PV_{x,\mu}+\omega_2)^{\frac{2^*}{2}}\partial_{h}PU_{x,\mu} \cr
=&-\int_{\Omega} \Delta\left(PU_{x, \mu}\right) \partial_{h}\left(PU_{x, \mu}\right)-\int_{\Omega}K_1(y)\left(P U_{x, \mu}\right)^{2^{*}-1} \partial_{h}\left(PU_{x, \mu}\right) \\
&\quad -\frac{1}{2}\int_{\Omega}PU_{x,\mu}^{\frac{2^*}{2}-1}PV_{x,\mu}^{\frac{2^*}{2}}\partial_{h}PU_{x, \mu}+\mu^{\alpha(h)} O\Bigl(\frac{1}{\mu^{N-2 \vartheta}}\Bigr).
\end{split}
\end{align}
Similarly, we have
\begin{align}\begin{split}\label{eqs4.28(1)}
&-\int_{\Omega}\Delta\left(PV_{x,\mu}+\omega_2\right)\partial_{h}\left(PV_{x,\mu}\right)-\int_{\Omega}K_2(y)\left(PV_{x,\mu}+\omega_2\right)_{+}^{2^{*}-1}\partial_{h}\left(PV_{x, \mu}\right)\cr
&\quad -\frac{1}{2}\int_{\Omega}(PV_{x,\mu}+\omega_2)^{\frac{2^*}{2}-1}(PU_{x,\mu}+\omega_1)^{\frac{2^*}{2}}\partial_{h}PV_{x,\mu} \cr
=&-\int_{\Omega} \Delta\left(PV_{x, \mu}\right) \partial_{h}\left(PV_{x, \mu}\right)-\int_{\Omega}K_2(y)\left(P V_{x, \mu}\right)^{2^{*}-1} \partial_{h}\left(PV_{x, \mu}\right) \\
&\quad -\frac{1}{2}\int_{\Omega}PV_{x,\mu}^{\frac{2^*}{2}-1}PU_{x,\mu}^{\frac{2^*}{2}}\partial_{h}PV_{x, \mu}+\mu^{\alpha(h)} O\Bigl(\frac{1}{\mu^{N-2 \vartheta}}\Bigr).
\end{split}
\end{align}
It follows from Propositions 2.3 and 2.4, \eqref{eqs4.28} and \eqref{eqs4.28(1)} that \eqref{eqs4.18} is equivalent to
\begin{equation}\label{eqs4.29}
\frac{B_{l}\mu x_{h}}{\mu^{\beta_{1h}-1}}+\frac{B_{l}\mu x_{h}}{\mu^{\beta_{2h}-1}}=O\Bigl(\frac{1}{\mu^{\beta_{1M}-1+\sigma}}+\frac{\left(\mu x_{h}\right)^{2}}{\mu^{\beta_{1h}-1}}+\frac{1}{\mu^{\beta_1} L}+\frac{1}{\mu^{\beta_{2M}-1+\sigma}}+\frac{\left(\mu x_{h}\right)^{2}}{\mu^{\beta_{2h}-1}}+\frac{1}{\mu^{\beta_2} L}\Bigr)
\end{equation}
and
\begin{align}\begin{split}\label{eqs4.30}
&- \frac{1}{\mu^{\beta_1}} \sum_{i \in J} a_{1i} \int_{\mathbb{R}^{N}}|y_{i}|^{\beta_1} U_{0,1}^{2^{*}-1} \psi_{0}
-\frac{\left(2^{*}-1\right) B_1 \ds\int_{\mathbb{R}^{N}} U_{0,1}^{2^{*}-2} \psi_{0}}{\mu^{N-2} L^{N-2}} \sum_{j=1}^{\infty} \Gamma\left(P_{j}, 0\right)\cr
&- \frac{1}{\mu^{\beta_2}} \sum_{i \in J} a_{2i} \int_{\mathbb{R}^{N}}|y_{i}|^{\beta_2} V_{0,1}^{2^{*}-1} \xi_{0}
-\frac{\left(2^{*}-1\right) B_2 \ds\int_{\mathbb{R}^{N}} V_{0,1}^{2^{*}-2} \xi_{0}}{\mu^{N-2} L^{N-2}} \sum_{j=1}^{\infty} \Gamma\left(P_{j}, 0\right)
=o\Bigl(\frac{1}{\mu^{\beta_1}}+\frac{1}{\mu^{\beta_2}}\Bigr).
\end{split}
\end{align}
Since $\beta=\min\{\beta_1,\beta_2\}$ and $L \sim \mu^{\frac{\beta}{N-2}-1}$, we see
$$\frac{1}{\mu^{\beta} L}=O\Bigl(\frac{1}{\mu^{\beta-1+\theta}}\Bigr)$$
for some small  $\theta>0$. Note that
$$\int_{\mathbb{R}^{N}} U_{0,1}^{2^{*}-2} \psi_{0}=-\frac{N-2}{2\left(2^{*}-1\right)} \int_{\mathbb{R}^{N}} U_{0,1}^{2^{*}-1}<0,$$
$$\int_{\mathbb{R}^{N}} V_{0,1}^{2^{*}-2} \xi_{0}=-\frac{N-2}{2\left(2^{*}-1\right)} \int_{\mathbb{R}^{N}} V_{0,1}^{2^{*}-1}<0,$$
$$\int_{\mathbb{R}^{N}}|y_i|^{\beta} U_{0,1}^{2^{*}-2} \psi_{0}=-\frac{\beta+\frac{N-2}{2}}{2^{*}-1} \int_{\mathbb{R}^{N}}|y_i|^{\beta} U_{0,1}^{2^{*}-1}<0$$
and
$$\int_{\mathbb{R}^{N}}|y_i|^{\beta} V_{0,1}^{2^{*}-2} \xi_{0}=-\frac{\beta+\frac{N-2}{2}}{2^{*}-1} \int_{\mathbb{R}^{N}}|y_i|^{\beta} V_{0,1}^{2^{*}-1}<0.$$
Clearly, \eqref{eqs4.29} and \eqref{eqs4.30} have a solution  $\left(x_{L}, \mu_{L}\right)$, satisfying
$$\left|x_{L}\right| \leq \frac{C}{\mu^{1+\theta}}, \quad \mu_{L}=L^{\frac{N-2}{\beta-N+2}}\left(C_{0}+o(1)\right),$$
where
$$
C_{0}=\Bigl[C_1\frac{-\sum_{i \in J} a_{i} \ds\int_{\mathbb{R}^{N}}|y_i|^{\beta} U_{0,1}^{2^{*}-1} \psi_{0}}{\left(2^{*}-1\right)  \ds\int_{\mathbb{R}^{N}} \Bigl(B_1U_{0,1}^{2^{*}-2} \psi_{0}+B_2V_{0,1}^{2^{*}-2} \xi_{0}\Bigr) \sum_{j=1}^{\infty} \Gamma\left(P_{j}, 0\right)}\Bigr]^{\frac{1}{\beta-(N-2)}}>0.
$$
So, we have proved that  $(u,v)=(P U_{x_{L}, \mu_{L}}+\omega_{1},PV_{x_{L},\mu_{L}}+\omega_2)$ satisfies
\begin{eqnarray*}\begin{cases}
-\Delta u=K_1(y) u_{+}^{2^*-1}+\frac{1}{2}u_+^{\frac{2^*}{2}-1}v_+^{\frac{2^*}{2}},\,\,\,\,\,y\in\Omega,\cr
-\Delta v=K_2(y) v_{+}^{2^*-1}+\frac{1}{2}u_+^{\frac{2^*}{2}}v_+^{\frac{2^*}{2}-1},\,\,\,\,\,y\in\Omega.
\end{cases}
\end{eqnarray*}

Now we prove that  $u>0$. It is easy to check that
$$
|u(y)| \leq \frac{C}{\left(1+\left|y^{\prime \prime}\right|\right)^{N-2-k-\vartheta}}, \quad y=\left(y^{\prime}, y^{\prime \prime}\right) \in \Omega, \quad y^{\prime \prime} \in \mathbb{R}^{N-k}.
$$
From
$$
u(y)=\int_{\Omega} G(z, y) \Bigl(K_1(z) u_{+}^{2^{*}-1}+\frac{1}{2}u_+^{\frac{2^*}{2}-1}v_+^{\frac{2^*}{2}}\Bigr) dz,
$$
we obtain

\begin{align*}
|\nabla u(y)| & =\Bigl|\int_{\Omega} \nabla_{y} G(z, y) \Bigl(K_1(z) u_{+}^{2^{*}-1}+\frac{1}{2}u_+^{\frac{2^*}{2}-1}v_+^{\frac{2^*}{2}}\Bigr) \Bigr|\cr
&\leq C \int_{\Omega} \frac{1}{|z-y|^{N-1}} \frac{C}{\left(1+\left|z^{\prime \prime}\right|\right)^{\left(2^{*}-1\right)(N-2-k-\vartheta)}} \cr
& \leq \frac{C}{\left(1+\left|y^{\prime \prime}\right|\right)^{\min \left[\left(2^{*}-1\right)(N-2-k-\vartheta)-1, N-1-\vartheta\right]}}.
\end{align*}

Note that  $N-2-k-\vartheta>\frac{N-2}{2}$  and  $\left(2^{*}-1\right)(N-2-k-\vartheta)-1>\frac{N}{2}$. Thus,  $\ds\int_{\Omega \cap \partial B_{R}(0)}|u||\nabla u| \rightarrow 0$  as  $R \rightarrow \infty$. Let  $u_{-}=u$  if  $u<0$ , and  $u_{-}=0$  if  $u \geq 0$. We now multiply
\begin{eqnarray*}\begin{cases}
-\Delta u=K_1(y) u_{+}^{2^*-1}+\frac{1}{2}u_+^{\frac{2^*}{2}-1}v_+^{\frac{2^*}{2}},\,\,\,\,\,y\in\Omega,\cr
-\Delta v=K_2(y) v_{+}^{2^*-1}+\frac{1}{2}u_+^{\frac{2^*}{2}}v_+^{\frac{2^*}{2}-1},\,\,\,\,\,y\in\Omega
\end{cases}
\end{eqnarray*}
 by  $(u_{-},v_{-})$ and integrate over  $\Omega$  to obtain  $\ds\int_{\Omega}\left|\nabla u_{-}\right|^{2}=0,\,\ds\int_{\Omega}\left|\nabla v_{-}\right|^{2}=0$. This gives  $(u_{-},v_{-})=(0,0)$. Strong maximum principle implies that $u$ and $v$ are positive in  $\Omega$.

\appendix
\section{Proof of Lemma \ref{lem3.4}}
In this section, we mainly give the proof of Lemma \ref{lem3.4}.

\textbf{Proof of Lemma \ref{lem3.4}.} Let  $\varepsilon \in(0,1)$  be any fixed small constant. Suppose that  $z \in B_{\varepsilon^{-1}}(x) \cap \Omega$. Then, similar to \eqref{eqs3.18} and \eqref{eqs3.19}, there is a constant  $C_{\varepsilon}>0$, depending on  $\varepsilon>0$, such that
$$\sum_{j=0}^{\infty} U_{x_{j}, \mu} \leq \frac{C_{\varepsilon} \mu^{\frac{N-2}{2}}}{(1+\mu|z-x|)^{N-2}}$$
and
$$\sum_{j=0}^{\infty} \frac{\mu^{\frac{N-2}{2}}}{\left(1+\mu\left|z-x_{j}\right|\right)^{\frac{N-2}{2}+\tau}} \leq \frac{C_{\varepsilon} \mu^{\frac{N-2}{2}}}{(1+\mu|z-x|)^{\frac{N-2}{2}+\tau}}.$$
So we have that for  $z \in B_{\varepsilon^{-1}}(x) \cap \Omega$,
\begin{equation}\label{eqsA.1}
\Bigl(\sum_{j=0}^{\infty} U_{x_{j}, \mu}\Bigr)^{\frac{4}{N-2}} \sum_{j=0}^{\infty} \frac{\mu^{\frac{N-2}{2}}}{\left(1+\mu\left|z-x_{j}\right|\right)^{\frac{N-2}{2}+\tau}} \leq \frac{C_{\varepsilon} \mu^{\frac{N+2}{2}}}{(1+\mu|z-x|)^{\frac{N-2}{2}+4+\tau}},
\end{equation}
which gives
\begin{align}\begin{split}\label{eqsA.2}
&\int_{B_{\varepsilon^{-1}}(x) \cap \Omega} G(z, y)\Bigl(\sum_{j=0}^{\infty} U_{x_{j}, \mu}\Bigr)^{\frac{4}{N-2}} \sigma(z) \sum_{j=0}^{\infty} \frac{\mu^{\frac{N-2}{2}}}{\left(1+\mu\left|z-x_{j}\right|\right)^{\frac{N-2}{2}+\tau}} dz\\
&\quad \leq C_{\varepsilon}\int_{B_{\varepsilon^{-1}}(x) \cap \Omega} G(z, y)\frac{\mu^{\frac{N+2}{2}}}{(1+\mu|z-x|)^{\frac{N-2}{2}+4+\tau}}\Bigl(\frac{1+\mu|z-x|}{\mu}\Bigr)^{\tau-1}\\
&\quad \leq C_{\varepsilon} \int_{\Omega} G(z, y) \frac{\mu^{\frac{N+2}{2}-\tau+1}}{(1+\mu|z-x|)^{\frac{N-2}{2}+5}}\\
&\quad =C_{\varepsilon} \int_{\mathbb{R}^{N}} \Gamma(z, y) \sum_{j=0}^{\infty} \frac{\mu^{\frac{N+2}{2}-\tau+1}}{\left(1+\mu\left|z-x_{j}\right|\right)^{\frac{N-2}{2}+5}} \\
&\quad \leq C_{\varepsilon} \sum_{j=0}^{\infty} \frac{\mu^{\frac{N-2}{2}-\tau+1}}{\left(1+\mu\left|y-x_{j}\right|\right)^{\min \left(\frac{N-2}{2}+3, N-2\right)}}.
\end{split}
\end{align}
We have
$$\sigma(y) \geq 2^{1-\tau} \varepsilon^{\tau-1}\Bigl(\frac{1+\mu|y-x|}{\mu}\Bigr)^{\tau-1}, \quad y \in B_{\varepsilon^{-1}}(x) \cap \Omega.$$
Thus, for  $y \in B_{\varepsilon^{-1}}(x) \cap \Omega$, there holds
\begin{align}\begin{split}\label{eqsA.3}
\frac{\mu^{\frac{N-2}{2}-\tau+1}}{(1+\mu|y-x|)^{\min \left(\frac{N-2}{2}+3, N-2\right)}}
 &=\Bigl(\frac{1+\mu|y-x|}{\mu}\Bigr)^{\tau-1} \frac{\mu^{\frac{N-2}{2}}}{(1+\mu|y-x|)^{\min \left(\frac{N-2}{2}+3, N-2\right)+\tau-1}} \\
&\leq \frac{C \varepsilon^{1-\tau} \sigma(y) \mu^{\frac{N-2}{2}}}{(1+\mu|y-x|)^{\frac{N-2}{2}+\tau+\theta}}.
\end{split}
\end{align}
On the other hand, we have
\begin{equation}\label{eqsA.4}
\sum_{j=1}^{\infty} \frac{\mu^{\frac{N-2}{2}-\tau+1}}{\left(1+\mu\left|y-x_{j}\right|\right)^{\min \left(\frac{N-2}{2}+3, N-2\right)}} \leq \frac{C \mu^{\frac{N-2}{2}-\tau+1}}{(\mu L)^{\min \left(\frac{N-2}{2}+3, N-2\right)}}
\end{equation}
and
\begin{align}\begin{split}\label{eqsA.5}
&\frac{\sigma(y) \mu^{\frac{N-2}{2}}}{(1+\mu|y-x|)^{\frac{N-2}{2}+\tau+1}}\\
&\quad \geq 2^{1-\tau} \varepsilon^{\tau-1}\Bigl(\frac{1+\mu|y-x|}{\mu}\Bigr)^{\tau-1} \frac{\mu^{\frac{N-2}{2}}}{\left(1+\mu\left|y-x\right|\right)^{\frac{N-2}{2}+\tau+1}} \\
&\quad =2^{1-\tau} \varepsilon^{\tau-1} \frac{\mu^{\frac{N-2}{2}-\tau+1}}{(1+\mu|y-x|)^{\frac{N-2}{2}+2}}\\
&\quad \geq 2^{1-\tau} \varepsilon^{\tau-1} \frac{\mu^{\frac{N-2}{2}-\tau+1}}{\left(1+\mu\left(\varepsilon^{-1}+1\right)\right)^{\frac{N-2}{2}+2}} \geq 2^{1-\tau} \varepsilon^{\tau-1+\frac{N-2}{2}+2} \frac{\mu^{\frac{N-2}{2}-\tau+1}}{(\mu L)^{\frac{N-2}{2}+2}}.
\end{split}
\end{align}
Combining \eqref{eqsA.4} and \eqref{eqsA.5}, we see that for  $y \in B_{\varepsilon^{-1}}(x) \cap \Omega$,
\begin{equation}\label{eqsA.6}
\sum_{j=1}^{\infty} \frac{\mu^{\frac{N-2}{2}-\tau+1}}{\left(1+\mu\left|y-x_{j}\right|\right)^{\min \left(\frac{N-2}{2}+3, N-2\right)}} \leq \frac{C_{\varepsilon} \sigma(y) \mu^{\frac{N-2}{2}}}{(1+\mu|y-x|)^{\frac{N-2}{2}+\tau+\theta}}.
\end{equation}
Combining \eqref{eqsA.2}, \eqref{eqsA.3} and \eqref{eqsA.6}, we obtain
\begin{align}\begin{split}\label{eqsA.7}
&\int_{B_{\varepsilon^{-1}}(x) \cap \Omega} G(z, y)\Bigl(\sum_{j=0}^{\infty} U_{x_{j}, \mu}\Bigr)^{\frac{4}{N-2}} \sigma(z) \sum_{j=0}^{\infty} \frac{\mu^{\frac{N-2}{2}}}{\left(1+\mu\left|z-x_{j}\right|\right)^{\frac{N-2}{2}+\tau}} d z \\
&\quad \leq C_{\varepsilon} \sigma(y)\sum_{j=0}^{\infty} \frac{\mu^{\frac{N-2}{2}}}{\left(1+\mu\left|y-x_{j}\right|\right)^{\frac{N-2}{2}+\tau+\theta}}.
\end{split}
\end{align}
Suppose that  $z \in \Omega \backslash B_{\varepsilon^{-1}}(x)$. Then
$$\sum_{j=0}^{\infty} U_{x_{j}, \mu}(z) \leq \frac{C \varepsilon^{\frac{N-2}{2}-\tau}}{\mu^{\frac{N-2}{2}-\tau}} \sum_{j=0}^{\infty} \frac{\mu^{\frac{N-2}{2}}}{\left(1+\mu\left|z-x_{j}\right|\right)^{\frac{N-2}{2}+\tau}}.$$
Thus, we have

\begin{align}\begin{split}\label{eqsA.8}
&\Bigl(\sum_{j=0}^{\infty} U_{x_{j}, \mu}\Bigr)^{\frac{4}{N-2}} \sum_{j=0}^{\infty} \frac{\mu^{\frac{N-2}{2}}}{\left(1+\mu\left|z-x_{j}\right|\right)^{\frac{N-2}{2}+\tau}}\\
&\quad \leq \frac{C \varepsilon^{2-\frac{4 \tau}{N-2}}}{\mu^{2-\frac{4 \tau}{N-2}}}\Bigl(\sum_{j=0}^{\infty} \frac{\mu^{\frac{N-2}{2}}}{\left(1+\mu\left|z-x_{j}\right|\right)^{\frac{N-2}{2}+\tau}}\Bigr)^{2^{*}-1} \\
&\quad \leq \frac{C \varepsilon^{2-\frac{4 \tau}{N-2}}}{\mu^{2-\frac{4 \tau}{N-2}}} \sum_{j=0}^{\infty} \frac{\mu^{\frac{N+2}{2}}}{\left(1+\mu\left|z-x_{j}\right|\right)^{\frac{N+2}{2}+\tau}}\Bigl(\sum_{j=0}^{\infty} \frac{1}{\left(1+\mu\left|z-x_{j}\right|\right)^{\tau}}\Bigr)^{\frac{4}{N-2}}\\
&\quad \leq \frac{C \varepsilon^{2-\frac{4 \tau}{N-2}}}{\mu^{2-\frac{4 \tau}{N-2}}} \sum_{j=0}^{\infty} \frac{\mu^{\frac{N+2}{2}}}{\left(1+\mu\left|z-x_{j}\right|\right)^{\frac{N+2}{2}+\tau}}, \quad \forall z \in \Omega \backslash B_{\varepsilon^{-1}}(x).
\end{split}
\end{align}
This gives
\begin{align}\begin{split}\label{eqsA.9}
&\int_{B_{\Omega \backslash \varepsilon^{-1}}(x)} G(z, y)\Bigl(\sum_{j=0}^{\infty} U_{x_{j}, \mu}\Bigr)^{\frac{4}{N-2}} \sum_{j=0}^{\infty} \frac{\mu^{\frac{N-2}{2}}}{\left(1+\mu\left|z-x_{j}\right|\right)^{\frac{N-2}{2}+\tau}} d z \\
&\quad \leq \frac{C \varepsilon^{2-\frac{4 \tau}{N-2}}}{\mu^{2-\frac{4 \tau}{N-2}}} \int_{\Omega} G(z, y) \sum_{j=0}^{\infty} \frac{\mu^{\frac{N+2}{2}}}{\left(1+\mu\left|z-x_{j}\right|\right)^{\frac{N+2}{2}+\tau}}\\
&\quad \leq \frac{C \varepsilon^{2-\frac{4 \tau}{N-2}}}{\mu^{2-\frac{4 \tau}{N-2}}} \sum_{j=0}^{\infty} \frac{\mu^{\frac{N-2}{2}}}{\left(1+\mu\left|y-x_{j}\right|\right)^{\frac{N-2}{2}+\tau}}.
\end{split}
\end{align}
So, the result follows from \eqref{eqsA.7} and \eqref{eqsA.9}.

Acknowledgement: The authors would like to thank Professor Chunhua Wang for the helpful discussion with her.

\end{document}